\theoremstyle{plain}
\newtheorem{theorem}{\bf Theorem}[section]
\newtheorem{lemma}[theorem]{\bf Lemma}
\newtheorem{proposition}[theorem]{\bf Proposition}
\newtheorem{corollary}[theorem]{\bf Corollary}
\newtheorem{thmx}{Theorem}
\theoremstyle{definition}
\newtheorem{definition}[theorem]{\bf Definition}
\newtheorem{remark}[theorem]{\bf Remark}
\newcommand{\eqa}[1]{
\begin{align*}
#1
\end{align*}}
\newcommand{\Nat}{\mathbb{N}}
\newcommand{\Rea}{\mathbb{R}}
\newcommand{\Com}{\mathbb{C}}
\newcommand{\ri}{{\rm i}}
\newcommand{\T}{\mathbb{T}}
\newcommand{\Ze}{\mathcal{Z}}
\newcommand{\Q}{\mathbb{Q}}
\newcommand{\R}{\mathbb{R}}
\newcommand{\C}{\mathbb{C}}
\newcommand{\N}{\mathbb{N}}
\newcommand{\Aut}{\mathrm{Aut}}
\title{Lie theoretic approach to unitary groups of $C^*$-algebras}
\author[H. Ando]{Hiroshi Ando}
\address{Hiroshi Ando, Department of Mathematics and Informatics, Chiba University, 1-33 Yayoi-cho, Inage, Chiba, 263- 8522,
Japan}
\email{hiroando@math.s.chiba-u.ac.jp}
\author[M. Doucha]{Michal Doucha}
\address{Institute of Mathematics\\
Czech Academy of Sciences\\
\v Zitn\'a 25\\
115 67 Praha 1\\
Czech Republic}
\email{doucha@math.cas.cz}
\urladdr{https://users.math.cas.cz/~doucha/}
\keywords{unitary groups, $C^*$-algebras, approximately inner automorphisms, Banach-Lie groups, $C^*$-simplicity}
\subjclass[2020]{22E65, 22F50, 46L05}
\thanks{H. Ando was supported by Japan Society for the Promotion of Sciences (20K03647). M. Doucha was supported by the GA\v{C}R project 22-07833K and by the Czech Academy of Sciences (RVO 67985840).}
\begin{document}
\maketitle
\begin{abstract}
Following Robert's \cite{Rob19}, we study the structure of unitary groups and groups of approximately inner automorphisms of unital $C^*$-algebras, taking advantage of the former being Banach-Lie groups. For a given unital $C^*$-algebra $A$, we provide a description of the closed normal subgroup structure of the connected component of the identity of the unitary group, denoted by $U_A$, resp. of the subgroup of approximately inner automorphisms induced by the connected component of the identity of the unitary group, denoted by $V_A$, in terms of perfect ideals, i.e. ideals admitting no characters. When the unital algebra is locally AF, we show that there is a one-to-one correspondence between closed normal subgroups of $V_A$ and perfect ideals of the algebra, which can be in the separable case conveniently described using Bratteli diagrams; in particular showing that every closed normal subgroup of $V_A$ is perfect. We also characterize unital $C^*$-algebras $A$ such that $U_A$, resp. $V_A$ are topologically simple, generalizing the main results from \cite{Rob19}. In the other way round, under certain conditions, we characterize simplicity of the algebra in terms of the structure of the unitary group. This in particular applies to reduced group $C^*$-algebras of discrete groups and we show that when $A$ is a reduced group $C^*$-algebra of a non-amenable countable discrete group, then $A$ is simple if and only if $U_A/\T$ is topologically simple.
\end{abstract}

\section{Introduction}
Unitary groups of unital $C^*$-algebras and von Neumann algebras are objects that have been studied from several different perspectives in various subjects such as operator algebras, infinite dimensional differential geometry, mathematical physics, and topological group theory. Even within the $C^*$-algebra theory, the interest is both in the intrinsic structure of these groups, such as the normal subgroup structure when the corresponding algebra is simple, or in their relation to areas such as K-theory (see e.g., the monograph \cite{RLLbook}) and exponential rank and length (see \cite{Ph93} and references therein). It was rarely used though that when equipped with the norm topology these groups are linear Banach-Lie groups, i.e. very well-behaved, in general, infinite-dimensional Lie groups for which the powerful machinery of Lie theoretic techniques is available. A rare innovative exception, from which also our inspiration comes, was the paper \cite{Rob19} of Robert which shows that the closed commutator subgroup modulo its center of the connected component of the unit of a unitary group of any simple unital $C^*$-algebra is topologically simple using Lie theoretic methods. Robert also showed that the group of approximately inner automorphisms induced by unitaries from the connected component of the unit of any simple unital $C^*$-algebra is topologically simple resolving a long standing open problem from \cite{ER93}. For another recent work related to unitary groups of $C^*$-algebras from the Lie perspective, we refer e.g., to \cite{ChaRob23,TakRobarxiv23,ADM22}.

We remark that Robert himself was inspired by a result \cite{BKS08} on Lie ideals in $C^*$-algebras, another area where Lie theory meets $C^*$-algebras and which has been already extensively studied (see e.g., \cite{MarMur98,Mar10,HopPau04,Robert16,GaTh23,GupJain}).

It is the goal of this paper to continue in this line of research and study the connected component of the unit of unitary groups, and their closely related cousins: the groups of approximately inner automorphisms induced by unitaries from such components, using Lie group techniques.

We refine Robert's description of normal subgroup structure of these groups from \cite{Rob19} and relate them to \emph{perfect ideals}, i.e. closed two-sided ideals admitting no characters. For a unital $C^*$-algebra $A$, denote by $U_A$ the connected component of the unit of the unitary group of $A$ (equipped with the norm topology) and by $V_A$ the closure, in $\Aut(A)$, of the inner automorphisms coming from $U_A$. For a closed two-sided ideal $I\subseteq A$, denote by $U_I$ the connected component of the identity of the group $\{u\in U(A)\mid u-1\in I\}$ and by $CU_I$ the closed commutator subgroup of $U_I$, which are both closed subgroups of $U_A$. Similarly, denote by $V_I$ the subgroup $\overline{\{\mathrm{Ad}(u)\mid u\in U_I\}}\leq V_A$.

\begin{thmx}[see Theorems~\ref{thm:perfectsubgrpsofU_A}, \ref{thm:setofnormalsubgrps}, \ref{thm:normalsubgrpsofV_A} and Corollary~\ref{cor:perfectsubgrpsofV_A}]
Let $A$ be a unital $C^*$-algebra. The maps \[I\to U_I,\quad I\to V_I\] are one-to-one correspondences between perfect ideals of $A$ and perfect closed normal subgroups of $U_A$, respectively of $V_A$. In fact, these maps are `almost' one-to-one correspondences between perfect ideals and general closed normal subgroups of $U_A$ and $V_A$.
\end{thmx}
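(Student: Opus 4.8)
The two maps share a common candidate for an inverse: to a closed normal subgroup $N\leq U_A$ one associates the closed two-sided ideal $I_N$ generated by $\{u-1 : u\in N\}$, and likewise for $V_A$ by pulling back along the homomorphism $u\mapsto \mathrm{Ad}(u)$. The plan is to prove four things in order: (i) the maps $I\mapsto U_I$ and $I\mapsto V_I$ send perfect ideals to perfect closed normal subgroups; (ii) $I_N$ recovers $I$, yielding injectivity; (iii) every perfect closed normal subgroup arises this way, yielding surjectivity; and (iv) a general closed normal subgroup is squeezed between $CU_I$ and $U_I$ for $I=I_N$, which is the precise content of the word `almost'.

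First I would establish normality and closedness of $U_I$ directly: since $I$ is two-sided, $v(u-1)v^{*}=vuv^{*}-1\in I$ for all $v\in U_A$, so $\{u\in U(A) : u-1\in I\}$ is normal in $U(A)$, and its identity component $U_I$ is a closed normal subgroup of $U_A$. The decisive input is the equivalence that $I$ is \emph{perfect} if and only if $U_I$ is \emph{perfect}. Here one identifies the Banach--Lie algebra of $U_I$ with $i\,I_{sa}$ and invokes the de la Harpe--Skandalis universal determinant: a character $\chi$ of $I$ produces a nontrivial continuous homomorphism $U_I\to\T$, hence destroys perfectness, while conversely the absence of characters forces $\overline{[U_I,U_I]}=U_I$, the abelianization being controlled by $I/\overline{[I,I]}$ together with the trace data that a perfect ideal suppresses. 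Converting this purely algebraic no-character condition into the topological identity $U_I=CU_I$, via exponential coordinates and commutator estimates, is the main obstacle.

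For injectivity I would recover $I$ from $U_I$: the closed linear span of $\{u-1 : u\in U_I\}$ already equals $I$, since for $h\in I_{sa}$ the curve $t\mapsto\exp(ith)-1$ has derivative $ih$ at $0$, so the span contains $i\,I_{sa}$ and hence $I=I_{sa}+iI_{sa}$, while conversely each $u-1$ lies in $I$ by definition. Thus $U_I=U_J$ forces $I=J$, and the same computation gives $I_N=I$ whenever $N=U_I$. For surjectivity, given a perfect closed normal subgroup $N\leq U_A$, set $I=I_N$; normality of $N$ makes $I$ a genuine two-sided ideal, the inclusion $N\subseteq U_I$ is immediate, and perfectness of $N$ upgrades this to equality, since the Lie algebra of $N$ is a closed ideal of $iA_{sa}$ contained in and generating $i\,I_{sa}$, and a perfect normal subgroup is recovered from its Lie algebra up to its identity component. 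That $I$ itself is perfect follows by contraposition from (i), a character of $I$ abelianizing $N$.

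Finally, the $V_A$ statements follow by transporting the $U_A$ results along the quotient homomorphism $\mathrm{Ad}\colon U_A\to V_A$, whose kernel consists of the unitaries in $U_A$ lying in the center $Z(A)$; since $V_I$ is the closed image of $U_I$, the correspondence descends and passing to automorphisms removes exactly the central scalar ambiguity. For the `almost' bijection onto all closed normal subgroups, one shows that any closed normal $N$ satisfies $CU_{I_N}=\overline{[U_{I_N},U_{I_N}]}\subseteq N\subseteq U_{I_N}$, so $N$ is pinned down by $I_N$ together with a choice of subgroup of the central abelian quotient $U_{I_N}/CU_{I_N}$; the failure of exact bijectivity is precisely this central ambiguity, which vanishes exactly when those quotients are trivial.
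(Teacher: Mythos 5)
Your plan hinges on the equivalence ``$I$ is perfect if and only if $U_I$ is topologically perfect,'' and you flag the step $U_I=CU_I$ as ``the main obstacle''; unfortunately this is not an obstacle but a false statement, and the $U_A$-half of your argument collapses on it. Take $A=I=M_2(\C)$: this algebra has no characters, hence is perfect, yet $U_I=U(2)$ maps onto $\T$ by the determinant, so it is not topologically perfect; its closed commutator subgroup is $SU(2)=CU_I$. The obstruction to perfectness of $U_I$ is not characters of $I$ but tracial states (the de la Harpe--Skandalis determinant), and perfect ideals do not exclude traces. For exactly this reason the precise results behind Theorem A (Theorem~\ref{thm:perfectsubgrpsofU_A}) state the bijection as $I\mapsto CU_I$, not $I\mapsto U_I$ --- the informal wording of Theorem A must be read through the referenced theorems. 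The same example defeats your surjectivity step: for the perfect closed normal subgroup $N=SU(2)$ one has $I_N=\mathrm{Id}(N-1)=M_2(\C)$ and $U_{I_N}=U(2)\neq N$, so perfectness of $N$ does not upgrade $N\subseteq U_{I_N}$ to equality. What the paper proves instead (Proposition~\ref{prop:perfectnormalsubgroups}, via \cite[Theorem 3.3]{Rob19} and Lemma~\ref{lem:U_IU_I=U_AU_A}) is that every topologically perfect closed normal subgroup $H$ equals $\overline{(H,H)}=CU_J$ for $J=\mathrm{Id}([H,A])$; injectivity on perfect ideals (Lemma~\ref{lem:CU_Iinjective}) then requires recovering $I$ from $CU_I$ --- your span computation correctly recovers $I$ from $U_I$, but that is the wrong group.

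The ``almost'' part and the $V_A$ part have further gaps. Your inverse $I_N=\mathrm{Id}(N-1)$ is the wrong ideal as soon as $N$ has central parts: for $N=\T 1\leq U(2)$ it gives $I_N=M_2(\C)$, and then $CU_{I_N}=SU(2)\not\subseteq N$, so your sandwich $CU_{I_N}\subseteq N\subseteq U_{I_N}$ fails on its lower side; the paper's choice $\mathrm{Id}([H,A])$ annihilates central subgroups, which is exactly why it is used. Moreover the correct upper bound is not $U_I$ but $N_I=\{u\in U_A\mid (u,U_A)\subseteq\widetilde{I}\}$ (Theorem~\ref{thm:setofnormalsubgrps}); $N_I$ contains $\Ze(U_A)$ for every $I$ and is strictly larger than $U_I$ whenever $\Ze(A/I)$ is nontrivial (Lemma~\ref{lem:imageofNI}), so the ambiguity is not ``a subgroup of $U_{I_N}/CU_{I_N}$.'' Finally, the $V_A$ statements do not transport along $\mathrm{Ad}$: the image of $\mathrm{Ad}$ is dense but in general not closed in $V_A$, and closed normal subgroups do not correspond under a dense-image homomorphism. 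The paper proves them separately (Theorem~\ref{thm:normalsubgrpsofV_A}, Corollary~\ref{cor:perfectsubgrpsofV_A}), using Robert's automorphism-group analogue \cite[Theorem 3.10]{Rob19}, the nontrivial fact that $V_A$ has trivial center (Lemma~\ref{lem: centerlessV_A}), and the identification of the upper bound with $\ker(P_I)$. That the two lattices genuinely differ --- for unital locally AF algebras $I\mapsto V_I$ is an exact bijection onto all closed normal subgroups of $V_A$ (Theorem~\ref{thm:AFalgebras}), while no such statement holds for $U_A$ --- rules out any purely formal descent from one to the other.
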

The vague wording `almost one-to-one' will be made precise in Theorems~\ref{thm:setofnormalsubgrps} and~\ref{thm:normalsubgrpsofV_A}.

For some $C^*$-algebras $A$ we obtain precise description of the closed normal subgroups of $V_A$.
\begin{thmx}[see Theorems~\ref{thm:AFalgebras} and~\ref{thm:Bratteli}]
Let $A$ be a unital locally AF algebra. Then the map \[I\to V_I\] is a one-to-one correspondence between perfect ideals of $A$ and closed normal subgroups of $V_A$. Moreover, when $A$ is separable, as a consequence, there is a one-to-one correspondence between closed normal subgroups of $V_A$ and directed and hereditary subsets of a Bratteli diagram of $A$ that do not contain an infinite path of $1$s.
\end{thmx}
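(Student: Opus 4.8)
The plan is to bootstrap from Theorem~A. By Theorem~\ref{thm:normalsubgrpsofV_A} and Corollary~\ref{cor:perfectsubgrpsofV_A} the assignment $I\mapsto V_I$ is already a bijection between perfect ideals of $A$ and the \emph{perfect} closed normal subgroups of $V_A$, the word `almost' there measuring exactly the failure of a closed normal subgroup to be perfect. Thus, to prove the first assertion it suffices to show that, when $A$ is locally AF, \emph{every} closed normal subgroup $N\le V_A$ is perfect; Theorem~A then identifies $N=V_I$ for a unique perfect ideal $I$. The guiding principle, visible already in finite dimensions, is that the abelian (determinant) obstruction responsible for the qualifier `almost' is carried entirely by the central unitaries, which become invisible on passing from $U_A$ to the inner automorphism group $V_A$.

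First I would settle the finite-dimensional case $A=\bigoplus_i M_{n_i}$. Here $V_A$ is the full inner automorphism group $U(A)/U(Z(A))\cong\prod_i\mathrm{PU}(n_i)$, in which each block of size $1$ contributes the trivial group $\mathrm{PU}(1)$; hence $V_A\cong\prod_{n_i\ge 2}\mathrm{PU}(n_i)$ is a direct product of the compact simple Lie groups $\mathrm{PU}(n_i)\cong\mathrm{PSU}(n_i)$. Equivalently, the central unitaries surject onto $U_A/CU_A$ via the block determinants, killing the abelian part upon passage to $V_A$. Every closed normal subgroup of such a product is a subproduct of the simple factors, hence perfect, and corresponds precisely to a perfect ideal of $A$, namely a sum of blocks of size $\ge 2$. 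The size-$1$ blocks---exactly the sources of characters---are invisible to $V_A$, which is the finite-dimensional shadow of the mechanism behind the theorem.

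Next I would pass to the limit. Write $A$ as a local inductive limit of finite-dimensional subalgebras $A_\lambda$ and set $W_\lambda=\{\mathrm{Ad}(u):u\in U(A_\lambda)\}\le V_A$, so that $\bigcup_\lambda W_\lambda$ is norm-dense in $V_A$. Each $W_\lambda$ is a quotient of $U(A_\lambda)\cong\prod_i U(n_i^{(\lambda)})$ by a central subgroup, so its semisimple part is a product of the simple groups $\mathrm{PSU}(n_i^{(\lambda)})$ with $n_i^{(\lambda)}\ge 2$. Given a closed normal subgroup $N\le V_A$, the intersection $N\cap W_\lambda$ is normal in $W_\lambda$ and hence, by the finite-dimensional analysis, selects a subproduct of these simple factors, that is, a perfect ideal $I_\lambda\subseteq A_\lambda$. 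I would then show that the $I_\lambda$ cohere, let $I\subseteq A$ be the closed ideal they generate, and prove that $I$ is perfect and $N=V_I$: perfectness because a character of $I$ would, by local finite-dimensionality, restrict to a character of some $I_\lambda$, contradicting perfectness of $I_\lambda$; and the equality $N=V_I$ by approximating elements of $N$ by elements of the dense subgroups $W_\lambda$ and invoking the closedness and normality of $N$. I expect this transfer across the inductive limit to be the main obstacle, the delicate point being that $W_\lambda$ is $U(A_\lambda)$ modulo the unitaries of $A_\lambda$ that are central in all of $A$---not merely in $A_\lambda$---so one must verify that this extra central quotienting does not disturb the correspondence between normal subgroups and perfect ideals, and then control the norm approximations uniformly enough to conclude $N=V_I$.

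Finally, for the separable case I would translate the bijection into combinatorics of a Bratteli diagram. The classical correspondence identifies the closed two-sided ideals of a separable AF algebra $A$ with the directed and hereditary subsets of any Bratteli diagram of $A$. Under this dictionary a character of the ideal corresponds to a one-dimensional quotient, which in the diagram is exactly an infinite path running through vertices of dimension $1$; hence the perfect ideals are precisely the directed hereditary subsets containing no infinite path of $1$s. Composing this identification with the first part of the theorem yields the asserted one-to-one correspondence between closed normal subgroups of $V_A$ and such subsets of the Bratteli diagram.
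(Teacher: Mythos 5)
Your reduction is the right one and matches the paper's frame: by Theorem~\ref{thm:normalsubgrpsofV_A} every closed normal subgroup $N\le V_A$ sits in a sandwich $V_I\le N\le \ker(P_I)$ for a perfect ideal $I$, so the whole theorem comes down to collapsing that sandwich (equivalently, to your claim that every closed normal subgroup of $V_A$ is topologically perfect). Your treatment of the Bratteli half is also essentially the paper's: ideals correspond to directed hereditary subsets, and non-perfectness corresponds to a one-dimensional quotient, i.e.\ to a (maximal) infinite path of dimension-$1$ vertices extracted from the complement diagram. But the collapse $V_I=\ker(P_I)$ for locally AF algebras is precisely the step the paper does \emph{not} prove internally --- it is quoted as an external result (\cite[Theorem 2.4]{AD23}) --- and your inductive-limit sketch does not close it.

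The concrete gap is in your passage to the limit. Given a closed normal $N\le V_A$, you form $N\cap W_\lambda$, classify it by the finite-dimensional analysis, and hope to recover $N$ from the resulting ideals $I_\lambda$. Two things go wrong. First, density of $\bigcup_\lambda W_\lambda$ in $V_A$ gives no control on $\bigcup_\lambda (N\cap W_\lambda)$: an element $\phi\in N$ is a limit of automorphisms $\mathrm{Ad}(u)$ with $u\in U(A_\lambda)$, but those approximants have no reason to lie in $N$, so $N\cap W_\lambda$ can be far too small to determine $N$; "approximating elements of $N$ by elements of $W_\lambda$ and invoking closedness of $N$" only yields $\overline{\bigcup_\lambda (N\cap W_\lambda)}\subseteq N$, and the hard inclusion $N\subseteq V_I$ --- exactly the content of $\ker(P_I)\subseteq V_I$ --- is never addressed. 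Second, as you yourself flag, $W_\lambda\cong U(A_\lambda)/\bigl(U(A_\lambda)\cap \mathcal{Z}(A)\bigr)$ is in general a quotient of $\prod_i U(n_i^{(\lambda)})$ by a \emph{proper} subgroup of its center, so it is not a product of simple groups $\mathrm{PSU}(n_i^{(\lambda)})$: it has nontrivial central (hence non-subproduct) closed normal subgroups, and $N\cap W_\lambda$ need not select a perfect ideal of $A_\lambda$ at all. Flagging this as "the main obstacle" does not discharge it; without a proof of $\ker(P_I)=V_I$ (or a citation of \cite[Theorem 2.4]{AD23}, which is what the paper does), the first assertion of the theorem remains unproved, and with it the Bratteli correspondence, which is built on top of it.
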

We refer the reader to Theorem~\ref{thm:Bratteli} for a precise description of the correspondence between closed normal subgroups of $V_A$ and such subsets of a Bratteli diagram of $A$.

Denoting by $V(A)$ the group of all approximately inner automorphims of a unital $C^*$-algebra $A$, we have the normal series $V_A\trianglelefteq V(A)\trianglelefteq \Aut(A)$ and the structure of the quotients $\Aut(A)/V(A)$, resp. $V(A)/V_A$ can be understood in some cases (see e.g., \cite{ER93}), we emphasize that it is indeed important to understand the structure of $V_A$ when one is interested in $\Aut(A)$.

The rest of the paper concerns the following problems. First we note that the converse to Robert's theorem, i.e. that when the algebra $A$ is simple, then the closed commutator subgroup of $U_A$ modulo the center and $V_A$ are topologically simple, does not hold. We provide a characterization of those $C^*$-algebras for which these groups are topologically simple. Then we turn to the issue whether the simplicity of the algebra can be detected from the corresponding unitary group structure. We provide several partial results in this direction that in particular apply to the reduced group $C^*$-algebras of discrete groups, and among other things we prove the following.
\begin{thmx}[see Theorem~\ref{thm:simplicity of C_r(G)}]
Let $G$ be a countable discrete non-amenable group. Then $A:=C^*_r(G)$ is simple if and only if $U_A/\T$ is topologically simple.
\end{thmx}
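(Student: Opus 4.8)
The plan is to prove the two implications separately, in both cases exploiting the correspondence between perfect ideals and (perfect) closed normal subgroups of $U_A$ from Theorems~\ref{thm:perfectsubgrpsofU_A} and~\ref{thm:setofnormalsubgrps}, together with two features special to $A=C^*_r(G)$ for non-amenable $G$. The first is a purely ideal-theoretic fact: \emph{every nonzero closed two-sided ideal of $A$ is perfect}. Since $G$ is non-amenable, the trivial representation is not weakly contained in the left regular representation $\lambda$; and if a one-dimensional $\chi$ satisfied $\chi\prec\lambda$, then by Fell absorption $1_G=\chi\otimes\bar\chi\prec\lambda\otimes\bar\chi\cong\lambda$, a contradiction. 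Hence $A$ has no character. If now a nonzero closed ideal $I\trianglelefteq A$ carried a character $\chi\colon I\to\C$, then $\chi$ would be a nondegenerate one-dimensional representation of $I$, which extends uniquely to a representation of $A$ on the same one-dimensional space, i.e.\ to a character of $A$ — impossible. Thus every nonzero closed ideal of $A$ is perfect.

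I would then dispatch the implication ``$U_A/\T$ topologically simple $\Rightarrow A$ simple'' by contraposition. Assume $A$ is not simple and pick a proper nonzero closed ideal $I$; by the above $I$ is perfect, so Theorem~\ref{thm:perfectsubgrpsofU_A} yields a perfect closed normal subgroup $U_I\trianglelefteq U_A$. Its image $U_I\T/\T$ in $U_A/\T$ is normal and closed (because $U_I$ is closed and $\T$ is compact, so $U_I\T$ is closed). It is nontrivial, since for $0\neq h=h^*\in I$ the unitary $e^{\ri h}\in U_I$ is not a scalar as $I$ is proper. It is also proper: if $U_I\T=U_A$, then applying the quotient map $A\to A/I$, which sends $U_A$ onto the connected component $U_{A/I}$ and kills $U_I$, would give $U_{A/I}=\T$; this forces every self-adjoint element of $A/I$ to be a scalar, whence $A/I\cong\C$, i.e.\ $I$ is the kernel of a character of $A$, contradicting that $A$ has no character. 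So $U_A/\T$ admits a proper nontrivial closed normal subgroup and is not topologically simple.

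For the converse, assume $A=C^*_r(G)$ is simple. Then $A$ is noncommutative (as $G$ is non-amenable, hence non-abelian) with $Z(A)=\C1$, so $Z(U_A)=\T$, and the only perfect ideals are $\{0\}$ and $A$. By the description of the closed normal subgroups of $U_A$ in Theorem~\ref{thm:setofnormalsubgrps}, every closed normal subgroup of $U_A$ is therefore either contained in $\T$ or contains the closed commutator subgroup $CU_A$; modulo $\T$ its image is either trivial or contains $CU_A\T/\T$. It thus suffices to show that $U_A/\T$ is topologically perfect, i.e.\ that $U_A=\T\cdot CU_A$. Here I would invoke that a simple $C^*_r(G)$ has a \emph{unique} tracial state $\tau$: by Thomsen's computation of the abelianization through the de la Harpe--Skandalis determinant, $U_A/\overline{[U_A,U_A]}\cong\R/\overline{\tau_*(K_0(A))}$, and because $\tau(1)=1$ the scalars $e^{\ri\theta}1$ already realize all of $\R/\overline{\tau_*(K_0(A))}$. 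Hence $\T\cdot CU_A=U_A$, so the only closed normal subgroups of $U_A/\T$ are $\{1\}$ and $U_A/\T$, and $U_A/\T$ is topologically simple.

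The main obstacle is this final step of the converse. The reduction to ``topologically perfect modulo the center'' is formal, but establishing $U_A=\T\cdot CU_A$ genuinely rests on the unique-trace property of simple reduced group $C^*$-algebras: with more than one trace the abelianization is modelled on $\mathrm{Aff}(T(A))$, where the constant functions coming from scalars no longer surject, and the argument collapses. Supplying uniqueness of the trace (a deep input for $C^*$-simple $G$) is therefore the crux, whereas the ideal-theoretic half of the proof relies only on the elementary absence of characters forced by non-amenability.
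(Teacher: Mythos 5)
Your proposal is correct in substance, but it follows a genuinely different route from the paper's proof of Theorem~\ref{thm:simplicity of C_r(G)}. The paper works through the auxiliary group $CU_A/\Ze(CU_A)$: for ``$U_A/\T$ topologically simple $\Rightarrow A$ simple'' it transfers topological simplicity to $CU_A/\Ze(CU_A)$ using Robert's theorem that closed normal subgroups of $CU_A$ are normal in $U_A$, then applies Proposition~\ref{prop:ICCgroupalgebras} (which requires ICC, via $\Ze(A)=\C$) together with Proposition~\ref{prop:amenablecharacterization}; for the converse it combines Proposition~\ref{prop:ICCgroupalgebras} with an explicit topological isomorphism $U_A/\T\cong CU_A/\Ze(CU_A)$ (Lemma~\ref{lem:simplegrpalgebra}) built from the unique trace and Cuntz--Pedersen. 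You bypass $CU_A/\Ze(CU_A)$ entirely: your contrapositive direction exhibits $U_I\T/\T$ directly as a proper nontrivial closed normal subgroup of $U_A/\T$, with non-amenability entering only through the absence of characters (your Fell-absorption argument replacing the paper's Proposition~\ref{prop:amenablecharacterization}); notably this needs no ICC hypothesis, so it proves Theorem C exactly as stated in the introduction, whereas the paper's Theorem~\ref{thm:simplicity of C_r(G)} formally assumes ICC (harmless in the other direction, where simplicity implies ICC by Theorem~\ref{thm Cstar simple implication}). In the direction ``$A$ simple $\Rightarrow U_A/\T$ topologically simple'' both arguments rest on the same deep input --- uniqueness of the tracial state --- and on Theorem~\ref{thm:setofnormalsubgrps}; but where you invoke Thomsen's determinant description of $U_A/CU_A$ to obtain $U_A=\T\cdot CU_A$, the paper derives this identity elementarily (Lemmas~\ref{lem UA=CUA condition} and~\ref{lem:simplegrpalgebra}): Cuntz--Pedersen gives $A_{\rm sa}=\overline{[A,A]}_{\rm sa}+\R 1$, so every $u\in U_A$ factors into terms $e^{\ri a_k}e^{\ri r_k}$ with $a_k\in\overline{[A,A]}_{\rm sa}$, $r_k\in\R$, and $e^{\ri a_k}\in CU_A$ by \cite[Lemma 3.1]{Rob19}; no $K$-theoretic machinery is needed.

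Two small repairs to your write-up. First, the claim that $e^{\ri h}$ is non-scalar for every $0\neq h=h^*\in I$ is false as stated: take $h=2\pi p$ for a nonzero projection $p\in I$, which gives $e^{\ri h}=1$. Rescale so that $\|h\|<2\pi$; then $e^{\ri h}=1$ forces $h=0$, while $e^{\ri h}=z\in\T\setminus\{1\}$ would put the invertible element $z-1$ into the proper ideal $I$. Second, the normality and closedness of $U_I$ in $U_A$ come from the preliminaries (Section~\ref{sect2}), not from Theorem~\ref{thm:perfectsubgrpsofU_A}, which concerns $CU_I$; alternatively, run your argument with $CU_I$ itself, whose nontriviality for a nonzero perfect ideal follows from Lemma~\ref{lem:CU_Iinjective}.
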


The paper is organized as follows. The following Section~\ref{sect2} introduces all the necessary notions and notations. Section~\ref{sect3} contains first preliminary results that are used in the sequel and Sections~\ref{sect4} and~\ref{sect5} concern the normal subgroup structure of $U_A$ and $V_A$ respectively. In particular, Theorems A and B are proved there. Section~\ref{sect6} contains some additional information about the Lie structure of certain normal subgroups of $U_A$ which in particular shows that subalgebras that are $V_A$-invariant are also $V(A)$-invariant. Section~\ref{sect7} focuses on the simplicity of $C^*$-algebras and Theorem C is proved there.
\section{Preliminaries and notation}\label{sect2}
In this short section, we recall basic facts from (infinite-dimensional) Lie theory and $C^*$-algebras. We also introduce here all the main notation that we shall work with throughout the whole paper.
\subsection{Infinite-dimensional Lie groups}
A \emph{Banach-Lie group} is a group that is also a Banach manifold, i.e. an infinite dimensional smooth manifold modelled on a Banach space, and such that the group operations are smooth. In this paper, we solely work with a much more concrete and tractable subclass of Banach-Lie groups that we define below.

\begin{definition}\label{def:Liegrp}
Let $A$ be a unital Banach algebra and denote by $GL(A)$ the group of invertible elements of $A$ equipped with the norm topology. A closed subgroup $G\subseteq GL(A)$ is a \emph{linear Lie subgroup} if the map $\exp:A\to GL(A)$, $a\to \sum_{k=0}^\infty \frac{a^k}{k!}$, is a local homeomorphism around $0$ between $\mathfrak{L}(G):=\{a\in A\mid \exp(ta)\in G\,\forall t\in\Rea\}$ and $G$.
\end{definition}

It follows from \cite[Definition 5.32 and Corollary 5.34]{HofMor} that $G$ is then a Banach-Lie group modelled on $\mathfrak{L}(G)$, which is a Banach-Lie algebra of $G$. That is, a Banach subspace of $A$ closed under the bracket operation \[[x,y]=xy-yx,\; x,y\in \mathfrak{L}(G).\]

In particular, the following properties hold true for $G$ and $\mathfrak{L}(G)$ that we shall use without further reference in the sequel.

\begin{itemize}
    \item The connected component $G_0$ of the unit $1_G$ of $G$ is also a linear Lie group and $\mathfrak{L}(G_0)=\mathfrak{L}(G)$.
    \item For each $a\in \mathfrak{L}(G)$, $\exp(a)\in G_0$. Conversely, for each $g\in G_0$ there are $a_1,\ldots,a_n\in\mathfrak{L}(G)$ such that $g=\prod_{i=1}^n \exp(a_i)$.
    \item (The Trotter product formula) \[\exp(a+b)=\lim_{n\to\infty} \Big(\exp(\tfrac{1}{n}a)\exp(\tfrac{1}{n}b)\Big)^n,\] for every $a,b\in\mathfrak{L}(G)$.
    \item (The commutator formula) \[\exp([a,b])=\lim_{n\to\infty} \Big(\exp(\tfrac{1}{n}a)\exp(\tfrac{1}{n}b)\exp(-\tfrac{1}{n}a)\exp(-\tfrac{1}{n}b)\Big)^{n^2},\] for every $a,b\in\mathfrak{L}(G)$.
    \item (The Baker-Campbell-Hausdorff formula) \[\exp(a)\exp(b)=\exp\big(a+b+\frac{1}{2}[a,b]+\frac{1}{12}[a,[a,b]]-\frac{1}{12}[b,[a,b]]+\ldots\big),\] for all sufficiently small $a,b\in\mathfrak{L}(G)$. In fact, by \cite[Theorem 5.56]{BonfiglioliBCHMR2883818}, the right hand side converges whenever $a,b\in \mathfrak{L}(G)$ satisfy $\|a\|+\|b\|<\tfrac{1}{2}\log 2$.
\end{itemize}

Notice that in particular, $GL(A)$ is a linear Banach-Lie group and $\mathfrak{L}(GL(A))=A$.

\subsection{Unitary groups of $C^*$-algebras}
Let $A$ be a unital $C^*$-algebra. 
By $\Ze(A)$ we shall denote the center of $A$. Similarly, if $G$ is a (topological) group, by $\Ze(G)$ we shall denote the center of $G$. For two subsets $I,J\subseteq A$ we denote by $[I,J]$ the linear span of the commutators $\{xy-yx\mid x\in I,y\in J\}$ in $A$. Analogously, for subgroups $H_1,H_2$ of $G$ the subgroup of $G$ generated by commutators $(h_1,h_2)=h_1h_2h_1^{-1}h_2^{-1},\,\,h_1\in H_1,\,h_2\in H_2$ is denoted by $(H_1,H_2)$. 
Denote by $U(A)$ the unitary group, i.e. $U(A)=\{u\in A\mid uu^*=u^*u=1\}\subseteq {\rm GL}(A)$. Then $U(A)$ is a linear Lie group and its Lie algebra $\mathfrak{L}(U(A))$ is by definition the set of skew-adjoint elements, i.e. the set $\{a\in A\mid a^*=-a\}$. However, here we follow the standard practice from $C^*$-algebra theory and mathematical physics and we identify $\mathfrak{L}(U(A))$ with the set $A_{{\rm sa}}=\{a\in A\mid a^*=a\}$ of self-adjoint elements. Note that as a result we work with the exponential map $a\in A_{{\rm sa}}\to \exp(\ri a)\in U(A)$ and the bracket operation $\ri [x,y]$, for $x,y\in A_{{\rm sa}}$. We also emphasize that we shall interchangeably write both $\exp(a)$ and $e^a$ when using the exponential map as the latter is sometimes notationally more convenient when writing down formulas.

For $S\subseteq A$, we denote by ${\rm Id}_A(S)$ the closed two-sided ideal of $A$ generated by $S\subseteq A$. 

The connected component of the unit in $U(A)$ will be denoted by $U_A$. It is therefore again a linear Lie subgroup with $A_{{\rm sa}}$ as a Lie algebra. Moreover, if $I\subseteq A$ is a closed two-sided ideal, we define $U(I):=\{u\in U(A)\mid u-1\in I\}$ and then $U_I$ to be the connected component of the unit of $U(I)$. We have that $U_I$ is a closed connected normal linear Lie subgroup of $U_A$ and $\mathfrak{L}(U_I)=I_{{\rm sa}}:=A_{{\rm sa}}\cap I$. This follows immediately from Definition~\ref{def:Liegrp} and the fact that the exponential map $a\to e^{\ri a}$ maps $I$ into $1+I$ and the logarithm map, whenever defined, maps $1+ I$ into $I$. The closure of the commutator subgroup $(U_I,U_A)$ (resp. $(U_I,U_I)$) is denoted by $CU_{I,A}$ (resp. $CU_I$). These two groups are actually equal and topologically perfect. The latter is proved, for $CU_I$, in \cite[Theorem 3.6]{Rob19}, the former is implicit in \cite{Rob19} and explicitly proved in Lemma~\ref{lem:U_IU_I=U_AU_A}.
\bigskip

Denote by $\Aut(A)$ the group of all automorphisms of $A$ equipped with the pointwise convergence topology. For each $u\in U(A)$, $\mathrm{Ad}(u):A\to A$, defined by $a\to uau^*$, is an automorphism called an \emph{inner automorphism}. Clearly, $\{\mathrm{Ad}(u)\mid u\in U(A)\}\subseteq \Aut(A)$ is a subgroup, which is not closed in general. We denote by $V(A)$ the closure $\overline{\{\mathrm{Ad}(u)\mid u\in U(A)\}}$ in $\Aut(A)$, which is called the group of \emph{approximately inner automorphisms}. We shall also denote by $V_A$ the closure $\overline{\{\mathrm{Ad}(u)\mid u\in U_A\}}$. Notice that we have a normal series $V_A\trianglelefteq V(A)\trianglelefteq \Aut(A)$.

For any closed two-sided ideal $I\subseteq A$, the groups $V(I)$ and $V_I$ are defined analogously.
\section{First results}\label{sect3}
In this section, we collect and prove several basic results that will become handy in the sequel.
\subsection{Ideals and commutator subgroups}
The next lemma is proved in \cite[Proposition 5.25]{BKS08}, by an argument using W$^*$-algebras and a result from \cite{Meiers81MR0638381}. As is mentioned in the last paragraph to \cite[Lemma 1.4]{Robert16}, it can be shown using quasi-central approximate units. Here we include such a proof.   
\begin{lemma}\label{lem: liecom}
Let $A$ be a unital $C^*$-algebra and $I$ be a closed two-sided ideal of $A$. Then $\overline{[A,A]}\cap I=\overline{[I,I]}$ holds. 
\end{lemma}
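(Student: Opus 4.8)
The plan is to prove the two inclusions of $\overline{[A,A]}\cap I=\overline{[I,I]}$ separately, the nontrivial one being driven by a quasi-central approximate unit of $I$. For the easy inclusion $\overline{[I,I]}\subseteq\overline{[A,A]}\cap I$, I would observe that $[I,I]\subseteq[A,A]$ gives $\overline{[I,I]}\subseteq\overline{[A,A]}$, while $[I,I]\subseteq I$ together with the fact that $I$ is norm-closed gives $\overline{[I,I]}\subseteq I$; intersecting yields the claim.

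For the reverse inclusion $\overline{[A,A]}\cap I\subseteq\overline{[I,I]}$, I would fix a quasi-central approximate unit $(e_\lambda)_\lambda$ for $I$, i.e. an approximate unit of $I$ with $0\le e_\lambda\le 1$ and $\|[e_\lambda,a]\|\to 0$ for every $a\in A$; such a net exists for every closed two-sided ideal of a $C^*$-algebra by the standard construction of quasi-central approximate units. The key point is that for any $a,b\in A$ the elements $e_\lambda a$ and $be_\lambda$ lie in $I$, so $[e_\lambda a, be_\lambda]\in[I,I]$, and I claim $[e_\lambda a, be_\lambda]-e_\lambda[a,b]e_\lambda\to 0$. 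Expanding both sides, this difference equals $e_\lambda bae_\lambda-be_\lambda^2a$, which I would control by inserting the term $be_\lambda ae_\lambda$ and estimating $\|e_\lambda bae_\lambda-be_\lambda ae_\lambda\|=\|[e_\lambda,b]ae_\lambda\|\le\|[e_\lambda,b]\|\,\|a\|$ and $\|be_\lambda ae_\lambda-be_\lambda^2a\|=\|be_\lambda[a,e_\lambda]\|\le\|b\|\,\|[a,e_\lambda]\|$, both of which tend to $0$ by quasi-centrality (using $\|e_\lambda\|\le 1$).

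With this estimate in hand, given $x\in\overline{[A,A]}\cap I$ and $\varepsilon>0$, I would first pick a finite sum $y=\sum_j[a_j,b_j]\in[A,A]$ with $\|x-y\|<\varepsilon$, and then, since $x\in I$, choose $\lambda$ large enough that both $\|e_\lambda xe_\lambda-x\|<\varepsilon$ and $\sum_j\|[e_\lambda a_j, b_je_\lambda]-e_\lambda[a_j,b_j]e_\lambda\|<\varepsilon$ (the latter being possible because only the finitely many fixed elements $a_j,b_j$ enter). Setting $z=\sum_j[e_\lambda a_j,b_je_\lambda]\in[I,I]$ and using $\|e_\lambda\|\le 1$, the triangle inequality through the intermediate points $e_\lambda xe_\lambda$ and $e_\lambda ye_\lambda$ gives $\|x-z\|<3\varepsilon$, whence $x\in\overline{[I,I]}$.

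The main obstacle, and the place where the $C^*$-structure is genuinely used, is the approximation $[e_\lambda a, be_\lambda]\approx e_\lambda[a,b]e_\lambda$: multiplying a commutator by $e_\lambda$ on both sides does not a priori keep it a commutator of \emph{two} elements of $I$, and it is precisely quasi-centrality that lets one move the $e_\lambda$'s past the inner factors to recover an honest element of $[I,I]$. I would also note the order of quantifiers is essential — $y$ must be fixed before $\lambda$ is chosen — but this is routine once the key estimate is established.
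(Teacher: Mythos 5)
Your proposal is correct and follows essentially the same route as the paper: both prove the easy inclusion directly and then handle $\overline{[A,A]}\cap I\subseteq\overline{[I,I]}$ by compressing a finite sum of commutators with a quasi-central approximate unit $(e_\lambda)$ of $I$ and using quasi-centrality to replace $e_\lambda[a,b]e_\lambda$ by an honest element of $[I,I]$ (the paper uses $[e_\lambda a_k, e_\lambda b_k]$ where you use $[e_\lambda a_j, b_j e_\lambda]$, an immaterial variation), finishing with the same three-$\varepsilon$ triangle inequality. The order of quantifiers you flag — fix the approximating commutator sum before choosing $\lambda$ — is likewise exactly how the paper's argument proceeds.
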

\begin{proof}
To ease notation for $x,y\in A$ and $\varepsilon>0$, we denote $x\stackrel{\varepsilon}{\sim}y$ to mean $\|x-y\|<\varepsilon$. 
It is clear that $\overline{[I,I]}\subseteq \overline{[A,A]}\cap I$. 
Let $x\in \overline{[A,A]}\cap I$ and $\varepsilon>0$. 
Then there exists $x_0\in [A,A]$ such that $x\stackrel{\varepsilon}{\sim}x_0$. Let $(e_{\lambda})_{\lambda\in \Lambda}$ be an approximate units for $I$ that is quasicentral in $A$, meaning that $\lim_{\lambda}\|ae_{\lambda}-e_{\lambda}a\|=0$ for every $a\in A$ (see e.g., \cite[Theorem I.9.16]{Davidson-book}). 
We will use for functions $f,g\colon \Lambda \to A$ the notation $f(\lambda)\stackrel{\lambda \to \infty}{\sim}g(\lambda)$ to mean $\lim_{\lambda}\|f(\lambda)-g(\lambda)\|=0$. 
By $x\in I$, there exists $\lambda_0\in \Lambda$ such that $e_{\lambda}xe_{\lambda}\stackrel{\varepsilon}{\sim}x$ for every $\lambda\ge \lambda_0$. Because $e_{\lambda}$'s are positive contractions, we also have $e_{\lambda}xe_{\lambda}\stackrel{\varepsilon}{\sim}e_{\lambda}x_0e_{\lambda}\,(\lambda \ge \lambda_0)$. Write 
\[x_0=\sum_{k=1}^n(a_kb_k-b_ka_k),\,\,a_k,b_k\in A\,(1\le k\le n).\]
Because $(e_{\lambda})_{\lambda}$ is quasicentral in $A$, we have 
\[e_{\lambda}x_0e_{\lambda}\stackrel{\lambda\to \infty}{\sim}\sum_{k=1}^n(e_{\lambda}a_ke_{\lambda}b_k-e_{\lambda}b_ke_{\lambda}a_k),\]
so that there exists $\lambda_1\ge \lambda_0$ such that 
\[e_{\lambda}x_0e_{\lambda}\stackrel{\varepsilon}{\sim}\sum_{k=1}^n[e_{\lambda}a_k,e_{\lambda}b_k]=:x_1(\lambda)\in [I,I],\,\,\,(\lambda \ge \lambda_1).\]
Then for $\lambda\ge \lambda_1$, 
\[x\stackrel{\varepsilon}{\sim}e_{\lambda}xe_{\lambda}\stackrel{\varepsilon}{\sim}e_{\lambda}x_0e_{\lambda}\stackrel{\varepsilon}{\sim}x_1(\lambda)\in [I,I].\]
Since $\varepsilon$ is arbitrary, we obtain $x\in \overline{[I,I]}$. 
\end{proof}
\begin{lemma}\label{lem:U_IU_I=U_AU_A}
Let $A$ be a unital $C^*$-algebra and $I$ be a closed two-sided ideal of $A$. Then $CU_I=CU_{I,A}$.
\end{lemma}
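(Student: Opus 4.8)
The plan is to prove the two inclusions separately. The inclusion $CU_I\subseteq CU_{I,A}$ is immediate, since $U_I\subseteq U_A$ gives $(U_I,U_I)\subseteq(U_I,U_A)$. For the reverse inclusion I would first record that $CU_I$ is \emph{normal} in $U_A$: because $U_I\trianglelefteq U_A$, conjugation by $w\in U_A$ sends a commutator $(u,v)$ with $u,v\in U_I$ to $(wuw^{-1},wvw^{-1})\in(U_I,U_I)$, hence preserves $(U_I,U_I)$ and so its closure $CU_I$. Since $CU_{I,A}=\overline{(U_I,U_A)}$ and $CU_I$ is closed, it then suffices to show $(U_I,U_A)\subseteq CU_I$. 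Using the group-commutator identities $(xy,z)=x(y,z)x^{-1}\,(x,z)$ and $(x,yz)=(x,y)\,y(x,z)y^{-1}$ together with connectedness of $U_I$ and $U_A$ (so that every element is a product of exponentials $\exp(\ri a)$, $a\in I_{\mathrm{sa}}$, resp. $\exp(\ri b)$, $b\in A_{\mathrm{sa}}$, and each such factor may be subdivided as $\exp(\ri a)=\exp(\ri a/N)^N$ to make the exponent as small as we wish), every element of $(U_I,U_A)$ becomes a product of $U_A$-conjugates of ``basic'' commutators $(\exp(\ri a),\exp(\ri b))$ with $a\in I_{\mathrm{sa}}$, $b\in A_{\mathrm{sa}}$ of arbitrarily small norm. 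By normality of $CU_I$ the problem thus reduces to showing that each such basic commutator lies in $CU_I$.

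Set $\mathfrak{k}_I:=\overline{\Span}_{\R}\{\ri[x,y]\mid x,y\in I_{\mathrm{sa}}\}\subseteq I_{\mathrm{sa}}$. The algebraic heart of the argument is the claim that $\ri[a,b]\in\mathfrak{k}_I$ for all $a\in I_{\mathrm{sa}}$ and $b\in A_{\mathrm{sa}}$. Indeed, $[a,b]\in[I,A]\subseteq\overline{[A,A]}\cap I=\overline{[I,I]}$ by Lemma~\ref{lem: liecom}, so $\ri[a,b]=\lim_n\sum_k\ri[x_{k,n},y_{k,n}]$ for suitable $x_{k,n},y_{k,n}\in I$. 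Decomposing each $x=x_1+\ri x_2$, $y=y_1+\ri y_2$ with $x_j,y_j\in I_{\mathrm{sa}}$ (using that $I$ is $*$-closed) and taking self-adjoint parts, one computes $\re(\ri[x,y])=\ri[x_1,y_1]-\ri[x_2,y_2]\in\mathfrak{k}_I$; since $\ri[a,b]$ is already self-adjoint and $\re$ is continuous, passing to the limit yields $\ri[a,b]\in\mathfrak{k}_I$. In other words, this transfers the ideal identity $\overline{[I,A]}=\overline{[I,I]}$ to the level of self-adjoint commutators: the closed real span of self-adjoint commutators from $I_{\mathrm{sa}}\times A_{\mathrm{sa}}$ coincides with $\mathfrak{k}_I$.

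It then remains to pass from this Lie-algebra-level identity back to the groups, which I would do directly (rather than separately establishing that the closed commutator subgroups are Lie subgroups with identifiable Lie algebras). On the one hand, for $c\in\mathfrak{k}_I$ one has $\exp(\ri c)\in CU_I$: this holds for $c=\ri[x,y]$ with $x,y\in I_{\mathrm{sa}}$ by the commutator formula (the relevant net lies in $(U_I,U_I)$), extends to finite real combinations via the Trotter product formula together with $s\,\ri[x,y]=\ri[sx,y]$, and then to all of $\mathfrak{k}_I$ because $CU_I$ is closed. On the other hand, for $a\in I_{\mathrm{sa}}$, $b\in A_{\mathrm{sa}}$ of small enough norm the Baker--Campbell--Hausdorff formula gives $(\exp(\ri a),\exp(\ri b))=\exp(\ri c_0)$, where $\ri c_0$ is a convergent series of iterated brackets of $\ri a$ and $\ri b$, each term containing at least one factor $\ri a$. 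A short induction on bracket length, using the ideal property of $I$ and the claim of the previous paragraph for the base case $[\ri a,\ri b]$, shows that every such term divided by $\ri$ lies in $\mathfrak{k}_I$; hence $c_0\in\mathfrak{k}_I$ by closedness, and so $(\exp(\ri a),\exp(\ri b))=\exp(\ri c_0)\in CU_I$. Together with the reduction of the first paragraph this yields $(U_I,U_A)\subseteq CU_I$, whence $CU_{I,A}\subseteq CU_I$. I expect the main obstacle to be the bookkeeping in this last step—tracking the $\ri$-conventions and verifying that each Baker--Campbell--Hausdorff bracket stays inside the real space $\mathfrak{k}_I$—whereas the conceptual crux is the transfer of $\overline{[I,A]}=\overline{[I,I]}$ to self-adjoint commutators carried out in the second paragraph.
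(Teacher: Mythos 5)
Your argument is correct, but it takes a genuinely more self-contained route than the paper's. The paper settles the nontrivial inclusion $CU_{I,A}\subseteq CU_I$ in a few lines by outsourcing all group-level work to \cite[Lemma 3.1]{Rob19}: that lemma says $CU_{I,A}$ is topologically generated by $\{e^{\ri c}\mid c\in[I,A]\cap A_{\rm sa}\}$, Lemma~\ref{lem: liecom} gives $[I,A]\cap A_{\rm sa}\subseteq\overline{[A,A]}\cap I\cap A_{\rm sa}=\overline{[I,I]}\cap A_{\rm sa}$, and a second application of Robert's lemma then places each such $e^{\ri c}$ in $CU_I$. You use exactly the same $C^*$-algebraic ingredient, Lemma~\ref{lem: liecom} --- your self-adjoint-part computation $\re(\ri[x,y])=\ri[x_1,y_1]-\ri[x_2,y_2]$, which transfers $\overline{[I,A]}$ into the closed real span $\mathfrak{k}_I$ of $\{\ri[x,y]\mid x,y\in I_{\rm sa}\}$, is precisely the bridge between the complex-linear statement of that lemma and the self-adjoint generators one needs --- but instead of quoting Robert you rebuild the group-level facts by hand: normality of $CU_I$ in $U_A$, reduction of $(U_I,U_A)$ to small basic commutators via the identities $(xy,z)=x(y,z)x^{-1}(x,z)$ and $(x,yz)=(x,y)y(x,z)y^{-1}$, the Baker--Campbell--Hausdorff expansion of $(\exp(\ri a),\exp(\ri b))$ with the bracket-length induction (which is sound: every nonzero Lie monomial of degree at least two in $\ri a,\ri b$, divided by $\ri$, lands in $\mathfrak{k}_I$, using the ideal property for brackets with $\ri a$ and your transfer claim for brackets with $\ri b$), and the passage from $\mathfrak{k}_I$ back to $CU_I$ via the commutator and Trotter formulas. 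In effect you re-prove \cite[Lemma 3.1]{Rob19} in both its $CU_{I,A}$ and $CU_I$ incarnations (the latter recorded in the paper as Lemma~\ref{lem: expi[I,I]}); this is essentially the technique the paper itself deploys for the sharper Lemma~\ref{Rob3.1'}, where commutators are likewise reduced to small exponentials and expanded by Baker--Campbell--Hausdorff. What the paper's route buys is brevity; what yours buys is independence from the cited generation lemma, at the cost of roughly a page of Lie-theoretic bookkeeping, all of which checks out.
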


\begin{proof}
The inclusion $CU_I\subseteq CU_{I,A}$ is obvious. To show the converse inclusion, note that by \cite[Lemma 3.1]{Rob19}, the set $\{e^{\ri c}\mid c\in [I,A]\cap A_{\rm sa}\}$ generates $CU_{I,A}$ as a topological group. Therefore it suffices to show that $e^{\ri c}\in CU_I$ for every $c\in [I,A]\cap A_{\rm sa}$. But in this case it holds by Lemma \ref{lem: liecom} that $c\in \overline{[A,A]}\cap I_{\rm sa}=\overline{[I,I]}\cap A_{\rm sa}$, which by \cite[Lemma 3.1]{Rob19} again implies that $e^{\ri c}\in CU_I$.  
\end{proof}

The next lemma is essentially \cite[Lemma 3.1]{Rob19} and the same proof works for it. 
\begin{lemma}\label{lem: expi[I,I]}
Let $I$ be a closed two-sided ideal of a unital $C^*$-algebra $A$. 
Then $CU_I$ is generated by $\{e^{{\rm i}x}\mid x\in [I,I]\cap A_{{\rm sa}}\}$ as a topological group.  
\end{lemma}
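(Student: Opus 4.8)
The plan is to prove that the closed subgroup $G\le U_A$ topologically generated by $S:=\{e^{\ri x}\mid x\in[I,I]\cap A_{\rm sa}\}$ equals $CU_I=\overline{(U_I,U_I)}$, following the scheme of \cite[Lemma 3.1]{Rob19}. It is convenient to first record the routine algebraic fact that $[I,I]\cap A_{\rm sa}$ is the real-linear span of $\{\ri[a,b]\mid a,b\in I_{\rm sa}\}$; one sees this by writing the entries of a commutator in their self-adjoint and skew-adjoint parts. Throughout I use that on the unitary group the exponential is $a\mapsto e^{\ri a}$ and the Lie bracket is $\ri[\cdot,\cdot]$, as fixed in Section~\ref{sect2}.

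For the inclusion $G\subseteq CU_I$ I would argue as follows. Fix $a,b\in I_{\rm sa}$; the commutator formula gives
\[ e^{\ri(\ri[a,b])}=\lim_{n\to\infty}\big(e^{\frac{\ri}{n}a}e^{\frac{\ri}{n}b}e^{-\frac{\ri}{n}a}e^{-\frac{\ri}{n}b}\big)^{n^2}, \]
and each bracketed factor is the group commutator $(e^{\frac{\ri}{n}a},e^{\frac{\ri}{n}b})$ of two elements of $U_I$, hence lies in $(U_I,U_I)$; so does its $n^2$-th power, and the limit lies in the closed group $CU_I$. Thus $e^{\ri y}\in CU_I$ for every $y=\ri[a,b]$. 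For a general $x=\sum_j\ri[a_j,b_j]\in[I,I]\cap A_{\rm sa}$ the Trotter formula exhibits $e^{\ri x}$ as a limit of products of the factors $e^{\frac{\ri}{n}\ri[a_j,b_j]}$; since $\tfrac1n\ri[a_j,b_j]=\ri[\tfrac1n a_j,b_j]$ is of the same form, each factor is in $CU_I$, and hence so is $e^{\ri x}$. Therefore $S\subseteq CU_I$ and, as $CU_I$ is closed, $G\subseteq CU_I$.

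For $CU_I\subseteq G$ I would proceed in three steps. First, $S$ is invariant under conjugation by every $g\in U(A)$, since $ge^{\ri x}g^*=e^{\ri(gxg^*)}$ and $g[I,I]g^*\subseteq[I,I]$ (because $g(cd-dc)g^*=[gcg^*,gdg^*]$ with $gcg^*,gdg^*\in I$), while $gxg^*$ remains self-adjoint; consequently $G\trianglelefteq U_A$, in particular $G\trianglelefteq U_I$. Second, every basic commutator $(e^{\ri a},e^{\ri b})$ with $a,b\in I_{\rm sa}$ lies in $G$: when $a,b$ are small enough for the Baker--Campbell--Hausdorff series to converge, $(e^{\ri a},e^{\ri b})=e^{\ri c}$ where $c$ is a norm-convergent sum of iterated Lie brackets $\ri[\cdots]$ of $a$ and $b$; each such bracket lies in $[I,I]\cap A_{\rm sa}$ (here one uses that $I$ is an ideal to keep the brackets inside $[I,I]$), so the partial sums lie in $[I,I]\cap A_{\rm sa}$, whence $c\in\overline{[I,I]\cap A_{\rm sa}}$ and $e^{\ri c}\in G$. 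For arbitrary $a,b\in I_{\rm sa}$ I write $e^{\ri a}=(e^{\ri a/N})^N$ and $e^{\ri b}=(e^{\ri b/M})^M$, expand the commutator through the identities $(xy,z)={}^x(y,z)\,(x,z)$ and $(x,yz)=(x,y)\,{}^y(x,z)$ into a product of $U_I$-conjugates of $(e^{\ri a/N},e^{\ri b/M})$, and choose $N,M$ large so that this is a small basic commutator lying in $G$; normality of $G$ then gives $(e^{\ri a},e^{\ri b})\in G$. Third, since any $u,v\in U_I$ are finite products of exponentials $e^{\ri a}$ with $a\in I_{\rm sa}$, the same two identities express $(u,v)$ as a product of $U_I$-conjugates of basic commutators, all in $G$; hence $(U_I,U_I)\subseteq G$ and, $G$ being closed, $CU_I\subseteq G$.

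The hard part is this reverse inclusion, and within it the combinatorial reduction of an arbitrary commutator of two products of exponentials down to small basic commutators: this rests on the normality of $G$ (to absorb the conjugating factors thrown off by the commutator identities) together with the BCH step identifying a small basic commutator as $e^{\ri c}$ with $c\in\overline{[I,I]\cap A_{\rm sa}}$, which in turn relies on each BCH term being an iterated bracket that stays inside the ideal. The forward inclusion is comparatively routine, being a direct application of the commutator and Trotter formulas recorded in Section~\ref{sect2}.
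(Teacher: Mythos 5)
Your proof is correct and takes essentially the same approach as the paper: the paper's entire proof of this lemma is the remark that the argument of \cite[Lemma 3.1]{Rob19} carries over verbatim, and your write-up is precisely that scheme (commutator and Trotter formulas for the inclusion of the generators into $CU_I$; conjugation-invariance of the generating set, the commutator identities, and BCH for small basic commutators in the reverse direction). The only minor variation is that, since here both entries lie in $I_{\rm sa}$, the first-order BCH terms cancel and every remaining Lie monomial already lies in $[I,I]\cap A_{\rm sa}$, so you can bypass the integral-representation trick that Robert (and the paper's Lemma~\ref{Rob3.1'}) uses to handle the linear term in the mixed case $[I,A]$.
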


The next lemma improves Robert's lemma above. 
\begin{lemma}\label{Rob3.1'}
Let $A$ be a unital $C^*$-algebra, $I$ be a closed two-sided ideal of $A$. Then the set $E_{I,A}=\{e^{[a,b]}\mid a\in I_{\rm sa}, b\in A_{\rm sa}\}$ generates $CU_{I,A}$ as a topological group. In particular, $CU_I$ is generated by $\{e^{[a,b]}\mid a,b\in I_{\rm sa}\}$ as a topological group. 
\end{lemma}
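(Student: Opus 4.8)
The plan is to deduce the statement from the generating set for $CU_{I,A}$ already used in Lemma~\ref{lem:U_IU_I=U_AU_A} together with the Trotter product formula. Recall that, via \cite[Lemma 3.1]{Rob19}, the group $CU_{I,A}$ is generated as a topological group by $\{e^{\ri c}\mid c\in [I,A]\cap A_{\rm sa}\}$; write $\langle E_{I,A}\rangle$ for the topological group generated by $E_{I,A}$. First I would dispose of the easy inclusion. For $a\in I_{\rm sa}$ and $b\in A_{\rm sa}$ the commutator $[a,b]$ is skew-adjoint and lies in $I$, so setting $d=-\ri[a,b]$ we have $d\in A_{\rm sa}$ and $[a,b]=\ri d$; moreover $d\in [I,A]$ since $[I,A]$ is a complex subspace containing $[a,b]$. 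Hence $e^{[a,b]}=e^{\ri d}\in CU_{I,A}$, so $E_{I,A}\subseteq CU_{I,A}$ and therefore $\langle E_{I,A}\rangle\subseteq CU_{I,A}$.

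For the reverse inclusion it suffices to show $e^{\ri c}\in\langle E_{I,A}\rangle$ for each $c\in [I,A]\cap A_{\rm sa}$. The key algebraic step is to rewrite $\ri c$ as a \emph{real} linear combination of commutators of the allowed form. Decomposing each generator $xy-yx$ of $[I,A]$ into self-adjoint parts, $x=x_1+\ri x_2$ with $x_1,x_2\in I_{\rm sa}$ and $y=y_1+\ri y_2$ with $y_1,y_2\in A_{\rm sa}$, one sees that $[I,A]=\Span_{\C}\{[a,b]\mid a\in I_{\rm sa},\,b\in A_{\rm sa}\}$. Thus $\ri c=\sum_k\lambda_k[a_k,b_k]$ for some $\lambda_k\in\C$, $a_k\in I_{\rm sa}$, $b_k\in A_{\rm sa}$. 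Since each $[a_k,b_k]$ and $\ri c$ are skew-adjoint, applying the projection $z\mapsto\tfrac12(z-z^*)$ fixes $\ri c$ and replaces each $\lambda_k$ by $\mathrm{Re}(\lambda_k)$, yielding $\ri c=\sum_k r_k[a_k,b_k]$ with $r_k\in\R$.

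Finally I would pass from this Lie-algebra identity to the group level by the Trotter product formula (applied iteratively to the finite sum): $e^{\ri c}=\lim_{n\to\infty}\big(\prod_k e^{(r_k/n)[a_k,b_k]}\big)^n$. Here the reality of the coefficients is exactly what is needed, since $(r_k/n)[a_k,b_k]=[(r_k/n)a_k,\,b_k]$ with $(r_k/n)a_k\in I_{\rm sa}$, so each factor $e^{(r_k/n)[a_k,b_k]}$ lies in $E_{I,A}$. Thus every partial product lies in the subgroup algebraically generated by $E_{I,A}$, and the Trotter limit lies in its closure $\langle E_{I,A}\rangle$; hence $e^{\ri c}\in\langle E_{I,A}\rangle$ and so $CU_{I,A}=\langle E_{I,A}\rangle$. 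The `in particular' assertion follows by the same argument, replacing \cite[Lemma 3.1]{Rob19} by Lemma~\ref{lem: expi[I,I]}, using $[I,I]=\Span_{\C}\{[a,b]\mid a,b\in I_{\rm sa}\}$ (decompose the generators into self-adjoint parts, all lying in $I_{\rm sa}$ since $I$ is a $*$-closed ideal), and running the identical Trotter argument with $a_k,b_k\in I_{\rm sa}$.

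I expect the only genuinely delicate point to be the bookkeeping that forces the coefficients $r_k$ to be real. Without first extracting the skew-adjoint part one only obtains $\ri c$ as a \emph{complex} combination of the $[a_k,b_k]$, and complex coefficients cannot be absorbed into the self-adjoint factors $a_k$ (or $b_k$) while keeping them self-adjoint, so the factors appearing in the Trotter product would fall outside $E_{I,A}$. Everything else is routine manipulation with the exponential map and the Trotter formula recorded in Section~\ref{sect2}.
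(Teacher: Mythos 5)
Your proof is correct, but it takes a genuinely different route from the paper's for the hard inclusion $CU_{I,A}\subseteq\langle E_{I,A}\rangle$. You start from Robert's generating set $E'_{I,A}=\{e^{\ri c}\mid c\in[I,A]\cap A_{\rm sa}\}$ and reduce each of its elements to $E_{I,A}$ purely algebraically: $[I,A]=\Span_{\C}\{[a,b]\mid a\in I_{\rm sa},\ b\in A_{\rm sa}\}$, and since $\ri c$ and all $[a_k,b_k]$ are skew-adjoint, taking skew-adjoint parts converts the complex coefficients into real ones, which can then be absorbed into the $I_{\rm sa}$-entries; repeated Trotter finishes. The paper instead argues from the definition $CU_{I,A}=\overline{(U_I,U_A)}$: it shows that the closed group $H$ generated by $E_{I,A}$ is normal in $U_A$, uses the commutator identities to reduce to commutators $(e^{\ri a},e^{\ri b})$ with $a\in I_{\rm sa}$, $b\in A_{\rm sa}$ of small norm, expands such a commutator by Baker--Campbell--Hausdorff, and writes the leading term $\ri(a-vav^*)$ as the vector-valued integral $-\int_0^1[e^{\ri tb}ae^{-\ri tb},b]\,dt$, approximated by finite sums of commutators of the allowed form, before applying the same repeated-Trotter step. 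Your argument is shorter and avoids BCH convergence, vector-valued Riemann integration, and the normality reduction; what the paper's route buys is independence from the full strength of \cite[Lemma 3.1]{Rob19} --- the paper's hard direction needs only the containment $E'_{I,A}\subseteq CU_{I,A}$ and in effect reproves (and strengthens) the generation statement, whereas you need that $E'_{I,A}$ actually generates. Since the paper quotes that lemma in full elsewhere anyway (e.g., in Lemma~\ref{lem:U_IU_I=U_AU_A}), this costs nothing, and your handling of the ``in particular'' part via Lemma~\ref{lem: expi[I,I]} is the correct analogue. The only point to make explicit is the multi-term Trotter formula you invoke: either justify $e^{\sum_k z_k}=\lim_{n\to\infty}\bigl(\prod_k e^{z_k/n}\bigr)^n$ directly, or obtain the same conclusion by induction from the two-term formula, using that each one-parameter group $t\mapsto e^{[ta_k,b_k]}$ stays inside the closed subgroup $\langle E_{I,A}\rangle$; this is the same ``repeated application of the Trotter product formula'' that the paper itself performs.
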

\begin{proof}
Let $H$ be the closed subgroup of $U_A$ generated by $E_{I,A}$. Then $H$ is a normal subgroup of $U_A$, since $ue^{[a,b]}u^*=e^{[uau^*,ubu^*]}$ with $uau^*\in I_{\rm sa}$, $ubu^*\in A_{\rm sa}$ if $a\in I_{\rm sa},\,b\in A_{\rm sa}$ and $u\in U_A$.  
By \cite[Lemma 3.1]{Rob19}, the set $E_{I,A}'=\{e^{\ri x}\mid x\in [I,A]\cap A_{\rm sa}\}$ generates $CU_{I,A}$ as a topological group. Because $E_{I,A}\subseteq E'_{I,A}$, this implies that $H\subseteq CU_{I,A}$. Thus, we only have to show that $CU_{I,A}\subseteq H$. 
Let $u\in U_I$ and $v\in U_A$, and we show that $(u,v)\in H$. 
For this purpose, we recall the formula (essentially the same as \cite[(2.1)]{Rob19} that for elements $x,y,z$ in a group $G$, 
\begin{equation}
    (z,xy)=(z,x)x(z,y)x^{-1}.\label{(2.1)'Robert}
\end{equation}
Since for every $r>0$ the exponentials of skew-adjoint operators of norm at most $r>0$ in $I$ (resp. $A$) algebraically generates $U_I$ (resp. $U_A$), by the formulas \cite[(2.1)]{Rob19} and (\ref{(2.1)'Robert}), we may assume that $u,v$ are of the form $u=e^{\ri a},\,v=e^{\ri b}$ for $a\in I_{\rm sa}$ and $b\in A_{\rm sa}$ 
 such that their norms are so small so that the Campbell--Baker--Hausdorff formula converges.  

This implies that  
\eqa{
(u,v)&=e^{\ri a}e^{-\ri vav^*}\\
&=\exp\left (\ri (a-vav^*)-\frac{\ri}{2}\ri [a,vav^*]+\frac{1}{12}\left ([a,\ri [a,vav^*]+[vav^*,\ri [a,vav^*]]\right )\cdots\right ),
}
Moreover, note that $f(t)=e^{\ri tb}ae^{-\ri tb}$ is a norm-differentiable function in $t\in \R$ with 
$f'(t)=e^{\ri tb}\,\ri [b,a]e^{-\ri tb}$ for every $t\in \R$. Thus, we can perform the vector-valued Riemann integration and see that 
\eqa{
\ri (a-vav^*)&=\ri (a-f(1))=\ri a-\ri \left (f(0)+\int_0^1f'(t)dt\right )\\
&=-\int_0^1 [\underbrace{e^{\ri tb}ae^{-\ri tb}}_{\in I_{\rm sa}},b]dt.
}
This in particular implies, by the standard approximation of vector-valued Riemann integrals by vector-valued step functions, that there exists sequences $x_n, y_n$ all of which are finite sums of elements from the set $\{[c,d]\mid c\in I_{\rm sa},\,d\in A_{\rm sa}\}$, such that $\ri (a-vav^*)=\lim_{n\to \infty}x_n$, and that (by the continuity of $\exp$)
$(u,v)=\lim_{n\to \infty}\exp(x_n+y_n)$. 
By a repeated application of the Trotter product formula, we see that $\exp(x_n+y_n)\in H$ for every $n \in\N$, whence $(u,v)\in H$ as well. This shows the claim. 

\end{proof}

\begin{proposition}\label{prop: idealcommutator}
Let $I$ be a closed two-sided ideal of a unital $C^*$-algebra $A$. Then $\mathcal{Z}(CU_I)=\mathcal{Z}(CU_A)\cap CU_I$ holds. 
\end{proposition}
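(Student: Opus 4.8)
The plan is to prove the two inclusions separately, the real content being that $\mathcal{Z}(CU_I)$ consists exactly of the central unitaries of $A$ that happen to lie in $CU_I$. First note that $U_I\subseteq U_A$ forces $CU_I\subseteq CU_A$, so the inclusion $\mathcal{Z}(CU_A)\cap CU_I\subseteq\mathcal{Z}(CU_I)$ is immediate: an element of $CU_I$ that is central in the larger group $CU_A$ is a fortiori central in the subgroup $CU_I$. Thus the whole problem reduces to the reverse inclusion $\mathcal{Z}(CU_I)\subseteq\mathcal{Z}(CU_A)$.

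For this I would first translate group-centrality into an algebraic commutation condition. By Lemma~\ref{Rob3.1'} (applied to $I$ and to $A$ itself) the groups $CU_I$ and $CU_A$ are topologically generated by $\{e^{[a,b]}\mid a\in I_{\rm sa},\,b\in A_{\rm sa}\}$ and $\{e^{[a,b]}\mid a,b\in A_{\rm sa}\}$ respectively. Hence $z\in\mathcal{Z}(CU_I)$ iff $z$ commutes with $e^{[a,b]}$ for all $a\in I_{\rm sa},b\in A_{\rm sa}$; replacing $a$ by $ta$ shows $z$ commutes with the whole one-parameter group $t\mapsto e^{t[a,b]}$, and differentiating at $t=0$ gives $z[a,b]=[a,b]z$. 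Since the complex span of $\{[a,b]\mid a\in I_{\rm sa},b\in A_{\rm sa}\}$ is exactly $[I,A]$, we conclude that $z\in CU_I$ lies in $\mathcal{Z}(CU_I)$ iff $z$ commutes with $[I,A]$ (and the analogous statement with $[A,A]$ characterizes $\mathcal{Z}(CU_A)$).

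Now write $z=1+w$ with $w\in I$; since $z$ is unitary, $w$ is normal. The relation $z[I,A]=[I,A]z$ becomes $w[I,A]=[I,A]w$, and since $[w,a]\in[I,A]$ for every $a\in A$, this yields $[w,[w,a]]=0$, i.e. $\mathrm{ad}_w^2=0$ on all of $A$. The heart of the argument is the claim that a normal $w$ with $\mathrm{ad}_w^2=0$ is central in $A$. I would prove this by representing $A$ faithfully on a Hilbert space and using the spectral resolution $E$ of $w$ in the bicommutant: for Borel sets $\Delta_1,\Delta_2$ with $\mathrm{dist}(\Delta_1,\Delta_2)>0$ the projections $E(\Delta_i)$ commute with $w$, so compressing $\mathrm{ad}_w^2(a)=0$ by $E(\Delta_1)(\cdot)E(\Delta_2)$ gives $\mathrm{ad}_w^2\big(E(\Delta_1)aE(\Delta_2)\big)=0$; but on this ``off-diagonal'' corner $\mathrm{ad}_w$ acts as $L_w-R_w$ with the spectra of $L_w$ and $R_w$ separated by $\mathrm{dist}(\Delta_1,\Delta_2)$, hence invertible, forcing $E(\Delta_1)aE(\Delta_2)=0$. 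As this holds for all such separated pairs, $a$ commutes with $w$; since $a$ was arbitrary, $w\in\mathcal{Z}(A)$ and so $z=1+w\in\mathcal{Z}(A)$.

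Once $z\in\mathcal{Z}(A)$, it commutes with every element of $A$ and in particular with all of $CU_A$, so $z\in\mathcal{Z}(CU_A)$; combined with $z\in CU_I$ this gives the desired inclusion. (Equivalently, the argument shows $\mathcal{Z}(CU_J)=CU_J\cap\mathcal{Z}(A)$ for every closed ideal $J$, applied to $J=I$ and $J=A$, from which the stated identity follows formally using $CU_I\subseteq CU_A$.) The main obstacle is the spectral step isolating that a normal element annihilated to second order by its inner derivation is central; the essential case is the self-adjoint one $\mathrm{ad}_h^2=0\Rightarrow\mathrm{ad}_h=0$, which one may alternatively reach by splitting $w=h+\ri k$ into its commuting self-adjoint parts and deducing $\mathrm{ad}_h^2=\mathrm{ad}_k^2=0$ from the vanishing of $\mathrm{ad}_w^2$, $\mathrm{ad}_{w^*}^2$ and $\mathrm{ad}_w\mathrm{ad}_{w^*}$ (the last two being available because $[I,A]$ is $*$-closed, so $w^*$ also commutes with $[I,A]$).
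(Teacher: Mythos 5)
Your proof is correct, but it takes a genuinely different route from the paper's. Both arguments start identically: the inclusion $\mathcal{Z}(CU_A)\cap CU_I\subseteq\mathcal{Z}(CU_I)$ is immediate, and differentiating the one-parameter subgroups $t\mapsto e^{t[a,b]}$ of the generators from Lemma~\ref{Rob3.1'} translates centrality into commutation with $[I,A]$. From there, however, you prove the strictly stronger statement $\mathcal{Z}(CU_I)=\mathcal{Z}(A)\cap CU_I$: you exploit that $w=z-1$ is a \emph{normal} element of $I$ commuting with $[I,A]$, deduce $\mathrm{ad}_w^2=0$ on all of $A$, and invoke the operator-theoretic fact that a normal element whose inner derivation squares to zero is central (via Rosenblum-type spectral separation on the corners $E(\Delta_1)B(H)E(\Delta_2)$, or via reduction to the self-adjoint case using the $*$-closedness of $[I,A]$). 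The paper instead stays soft and purely $C^*$-algebraic: it takes a quasicentral approximate unit $(e_\lambda)$ for $I$ and writes $(g-1)[a,b]=\lim_\lambda (g-1)[e_\lambda ae_\lambda,e_\lambda be_\lambda]$, so that the relevant commutators become commutators of elements of $I$; since $\exp(t[e_\lambda ae_\lambda,e_\lambda be_\lambda])\in CU_I$, centrality of $g$ in $CU_I$ plus differentiation at $t=0$ gives the commutation, and one concludes only that $g$ commutes with $[A,A]$, hence with $CU_A$ --- no normality, no spectral theory, and no claim that $g\in\mathcal{Z}(A)$. Your approach buys a stronger conclusion (centrality in the algebra itself, hence in particular $\mathcal{Z}(CU_A)=\mathcal{Z}(A)\cap CU_A$) at the cost of heavier machinery; two steps you leave implicit are standard but do need proofs: (i) passing from $E(\Delta_1)aE(\Delta_2)=0$ for all separated Borel pairs to $[w,a]=0$ requires a regularity (or $\pi$-$\lambda$) argument over the spectral measure, and (ii) the self-adjoint base case $\mathrm{ad}_h^2=0\Rightarrow\mathrm{ad}_h=0$ follows from the exact identity $e^{\ri th}ae^{-\ri th}=a+\ri t[h,a]$ together with boundedness of the left-hand side in $t$.
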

\begin{proof}It is clear that $CU_I\cap \mathcal{Z}(CU_A)\subseteq \mathcal{Z}(CU_I)$, so we prove the converse inclusion. Let $g\in \mathcal{Z}(CU_I)$ and we show that $g$ commutes with every $h\in CU_A$. By Lemma \ref{Rob3.1'}, the set $\{e^{[a,b]}\mid a,b\in A_{\rm{sa}}\}$ generates $CU_A$ as a topological group, we may assume that $h=e^{[a,b]}$ where $a,b\in A_{\rm{sa}}$. Let $(e_{\lambda})_{\lambda\in \Lambda}$ be an approximate unit in $I$ that is quasi-central in $A$. 

In order to prove $gh=hg$, it is sufficient to show that $(g-1)x=x(g-1)$, where $x=[a,b]$. Because $g-1\in I$ and $(e_{\lambda})$ is an approximate unit for $I$ that is quasicentral in $A$, we have (use also $g\in \mathcal{Z}(CU_I)$)
\begin{align*}
(g-1)(ab-ba)&=\lim_{\lambda}(g-1)e_{\lambda}^4(ab-ba)
=\lim_{\lambda}(g-1)(e_{\lambda}ae_{\lambda}^2be_{\lambda}-e_{\lambda}be_{\lambda}^2ae_{\lambda})\\
&=\lim_{\lambda}\frac{d}{dt}\bigg|_{t=0}(g-1)\underbrace{\exp(t[e_{\lambda}ae_{\lambda},e_{\lambda}be_{\lambda}])}_{\in CU_I}\\
&=\lim_{\lambda}\frac{d}{dt}\bigg|_{t=0}\exp(t[e_{\lambda}ae_{\lambda},e_{\lambda}be_{\lambda}])(g-1)\\
&=\lim_{\lambda}[e_{\lambda}ae_{\lambda},e_{\lambda}be_{\lambda}](g-1)\\
&=\lim_{\lambda}(ab-ba)e_{\lambda}^4(g-1)\\
&=(ab-ba)(g-1).
\end{align*}
Thus $gx=xg$, whence $g\in \mathcal{Z}(CU_A)$ holds.
\end{proof}

\begin{lemma}\label{lem: commutator and ideal}
Let $A$ be a unital $C^*$-algebra $H,K$ be normal subgroups of $U_A$. Then ${\rm Id}_A([H,K])={\rm Id}_A((H,K)-1)$ holds. 
\end{lemma}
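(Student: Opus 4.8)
The plan is to reduce the whole statement to a single elementary algebraic identity relating group commutators to ring commutators, and then to promote that identity from generators to the full ideals on both sides. Write $J_1 := {\rm Id}_A([H,K])$ and $J_2 := {\rm Id}_A((H,K)-1)$. The identity I would use is
\[
(h,k)-1 = (hk-kh)\,h^{-1}k^{-1}, \qquad h\in H,\ k\in K,
\]
which follows by expanding $(hk-kh)h^{-1}k^{-1} = hkh^{-1}k^{-1} - khh^{-1}k^{-1} = (h,k)-1$. Reading it the other way, multiplying on the right by $kh$, gives the companion identity $hk-kh = \big((h,k)-1\big)kh$. Since both $h^{-1}k^{-1}$ and $kh$ lie in $A$, these two identities already show that each single generator $(h,k)-1$ of the right-hand ideal lies in $J_1$, and each single ring commutator $hk-kh$ lies in $J_2$.

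For the inclusion $J_2\subseteq J_1$ I cannot stop at single commutators, since $(H,K)$ is the subgroup \emph{generated} by the $(h,k)$, so its elements are arbitrary finite products of commutators and their inverses. Here the key step is to observe that for any two-sided ideal $J\subseteq A$ the set $S_J := \{g\in \GL(A)\mid g-1\in J\}$ is a subgroup of $\GL(A)$: this is immediate from $g_1g_2-1 = g_1(g_2-1)+(g_1-1)$ and $g^{-1}-1 = -g^{-1}(g-1)$ together with $J$ being a left ideal. Applying this with $J=J_1$, the first identity shows that $S_{J_1}$ contains every commutator $(h,k)$, hence contains the whole subgroup $(H,K)$; therefore $(H,K)-1\subseteq J_1$, and passing to the closed two-sided ideal generated yields $J_2\subseteq J_1$.

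For the reverse inclusion $J_1\subseteq J_2$ the companion identity $hk-kh = \big((h,k)-1\big)kh$ shows that every ring commutator $hk-kh$ lies in the two-sided ideal $J_2$. Since $[H,K]$ is by definition the linear span of these commutators and $J_2$ is a closed subspace, I get $[H,K]\subseteq J_2$, and hence $J_1 = {\rm Id}_A([H,K])\subseteq J_2$.

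I do not expect any serious obstacle; the only point requiring care is that the first direction is genuinely not a statement about generators alone, so the subgroup observation for $S_J$ is needed to pass from the individual commutators $(h,k)$ to the full commutator subgroup $(H,K)$. Everything else is formal manipulation with the two displayed identities, and no limiting argument is required, since ${\rm Id}_A(\cdot)$ already denotes the closed two-sided ideal.
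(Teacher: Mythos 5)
Your proof is correct and follows essentially the same route as the paper's: both inclusions rest on exactly the two identities $hk-kh=\big((h,k)-1\big)kh$ and $(h,k)-1=(hk-kh)h^{-1}k^{-1}$. The only difference is that you make explicit, via the subgroup $S_J=\{g\in \GL(A)\mid g-1\in J\}$, the passage from single commutators to the full subgroup $(H,K)$ generated by them --- a point the paper's proof leaves implicit --- which is a worthwhile clarification but not a different argument.
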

\begin{proof}
Let $h\in H,k\in K$. Then 
$$hk-kh=(hkh^{-1}k^{-1}-1)kh,$$
which shows that $[H,K]\subseteq {\rm Id}_A((H,K)-1)$. Therefore, ${\rm Id}_A([H,K])\subseteq {\rm Id}_A((H,K)-1)$ holds. From the identity 
$$hkh^{-1}k^{-1}-1=(hk-kh)h^{-1}k^{-1}$$
we see that $(H,K)-1\subseteq {\rm Id}_A([H,K])$, whence ${\rm Id}_A((H,K)-1)\subseteq {\rm Id}_A([H,K])$.
\end{proof}
\begin{lemma}\label{lem:idealCU_I}
Let $A$ be a unital $C^*$-algebra and $I$ be a closed two-sided ideal of $A$. Then $\mathrm{Id}_A([I,I])=\mathrm{Id}_A([CU_I,CU_I])$.
\end{lemma}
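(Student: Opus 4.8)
The plan is to prove the two inclusions separately, the first being elementary and the second requiring a differentiation argument once it is reduced to a statement about $CU_I-1$.

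For the inclusion $\mathrm{Id}_A([CU_I,CU_I])\subseteq \mathrm{Id}_A([I,I])$ I would argue directly. If $g,h\in CU_I$ then $g-1,h-1\in I$ (since $CU_I\subseteq U_I$ and $U_I-1\subseteq I$), and the elementary identity
\[ gh-hg=(g-1)(h-1)-(h-1)(g-1) \]
exhibits $gh-hg$ as a single commutator of two elements of $I$, so $gh-hg\in[I,I]$. Taking linear spans gives $[CU_I,CU_I]\subseteq[I,I]$, and passing to the generated closed ideals yields the inclusion.

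For the reverse inclusion the key preliminary step is to replace the commutator ideal by $\mathrm{Id}_A(CU_I-1)$. Since $CU_I$ is a closed normal subgroup of $U_A$, Lemma~\ref{lem: commutator and ideal} (applied with $H=K=CU_I$) gives $\mathrm{Id}_A([CU_I,CU_I])=\mathrm{Id}_A((CU_I,CU_I)-1)$. As $CU_I$ is topologically perfect, $(CU_I,CU_I)$ is dense in $CU_I$, hence $(CU_I,CU_I)-1$ is dense in $CU_I-1$ and the two sets generate the same closed ideal; thus $\mathrm{Id}_A([CU_I,CU_I])=\mathrm{Id}_A(CU_I-1)$. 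It therefore suffices to show $[I,I]\subseteq\mathrm{Id}_A(CU_I-1)$. Since $I=I_{\mathrm{sa}}+\ri\,I_{\mathrm{sa}}$ and $[\cdot,\cdot]$ is bilinear, it is enough to treat $[a,b]$ with $a,b\in I_{\mathrm{sa}}$. For such $a,b$ the element $[a,b]$ is skew-adjoint, and for every $t\ge 0$ we have $t[a,b]=[\sqrt t\,a,\sqrt t\,b]$, so $e^{t[a,b]}=e^{[\sqrt t\,a,\sqrt t\,b]}\in CU_I$ by Lemma~\ref{Rob3.1'}; for $t<0$ one uses that $CU_I$ is a group. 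Hence $e^{t[a,b]}-1\in CU_I-1$ for all $t\in\Rea$, and since $\mathrm{Id}_A(CU_I-1)$ is a (closed) ideal it contains $\tfrac1t(e^{t[a,b]}-1)$ for $t\neq 0$. The map $t\mapsto e^{t[a,b]}$ is norm-differentiable with derivative $[a,b]$ at $t=0$, so letting $t\to 0$ and using norm-closedness yields $[a,b]\in\mathrm{Id}_A(CU_I-1)$. This gives $[I,I]\subseteq\mathrm{Id}_A(CU_I-1)=\mathrm{Id}_A([CU_I,CU_I])$, hence $\mathrm{Id}_A([I,I])\subseteq\mathrm{Id}_A([CU_I,CU_I])$, and combined with the first inclusion the desired equality.

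The main obstacle is precisely the reduction in the third paragraph: elements of $[CU_I,CU_I]$ are Lie commutators of exponentials, and at the infinitesimal level these only see \emph{double} commutators of the form $[[a,b],[c,d]]$, whereas $[I,I]$ is spanned by the \emph{single} commutators $[a,b]$. Climbing back from double to single commutators is exactly what is bought by combining Lemma~\ref{lem: commutator and ideal} with the topological perfectness of $CU_I$: together they let one replace $[CU_I,CU_I]$ by $CU_I-1$, after which the linear term $[a,b]$ is extracted by differentiating the one-parameter group $t\mapsto e^{t[a,b]}$ at the identity.
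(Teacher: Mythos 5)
Your proof is correct, and it overlaps with the paper's own argument only in its first half. The paper's proof is a single chain of equalities: by Lemma~\ref{lem: commutator and ideal} (with $H=K=CU_I$) and topological perfectness, ${\rm Id}_A([CU_I,CU_I])={\rm Id}_A((CU_I,CU_I)-1)={\rm Id}_A(CU_I-1)$ --- exactly your reduction --- but it then stays purely algebraic: since $CU_I=\overline{(U_I,U_I)}$, this ideal equals ${\rm Id}_A((U_I,U_I)-1)$, which by a \emph{second} application of Lemma~\ref{lem: commutator and ideal} (now with $H=K=U_I$) equals ${\rm Id}_A([U_I,U_I])={\rm Id}_A([I,I])$, the last step using the identity $uv-vu=[(u-1),(v-1)]$ together with the fact that $U_I$ spans $I+\Com 1$ (Lemma~\ref{lem: U_0(A) generates A}). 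You instead complete the hard inclusion analytically: having reached ${\rm Id}_A(CU_I-1)$, you exhibit each generator $[a,b]$, $a,b\in I_{\rm sa}$, as the norm limit of $\tfrac1t(e^{t[a,b]}-1)\in {\rm Id}_A(CU_I-1)$, using that $e^{t[a,b]}\in CU_I$; and you handle the easy inclusion by hand via $gh-hg=(g-1)(h-1)-(h-1)(g-1)$, which is the same identity that powers Lemma~\ref{lem: commutator and ideal}. The trade-off: the paper's route is shorter and needs no exponentials or differentiation at all, while yours is a genuinely Lie-theoretic argument of the kind the paper uses elsewhere (e.g., in Proposition~\ref{prop: idealcommutator} and Lemma~\ref{lem:lieofni}) and makes the containment $[I,I]\subseteq {\rm Id}_A(CU_I-1)$ completely explicit generator by generator. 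One small economy: you do not need the full strength of Lemma~\ref{Rob3.1'}; the containment $e^{[a,b]}\in CU_I$ for $a,b\in I_{\rm sa}$ already follows from Lemma~\ref{lem: expi[I,I]}, since $[a,b]=\ri x$ with $x=-\ri[a,b]\in [I,I]\cap A_{\rm sa}$.
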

\begin{proof}

By Lemma \ref{lem: commutator and ideal} and since $CU_I$ is topologically perfect, we have 
\eqa{
    {\rm Id}_A([CU_I,CU_I])&={\rm Id}_A((CU_I,CU_I)-1)\\
    &={\rm Id}_A(\overline{(CU_I,CU_I)}-1)\\
    &={\rm Id}_A(CU_I-1)={\rm Id}_A((U_I,U_I)-1)\\
    &={\rm Id}_A([U_I,U_I])={\rm Id}_A([I,I]).
}
\end{proof}

\subsection{Perfect Ideals}
\begin{definition}
Let $I$ be a closed two-sided ideal of a unital $C^*$-algebra $A$. We call $I$ {\it perfect} if $I={\rm Id}_A([I,I])$ holds. 
\end{definition}
\begin{remark}
Let $A$ be a $C^*$-algebra, and $I$ be a closed two-sided ideal of $A$. Then ${\rm Id}_A([I,I])={\rm Id}_I([I,I])$ holds. Indeed, it is clear that ${\rm Id}_I([I,I])\subseteq {\rm Id}_A([I,I])$. On the other hand, for all $a,b\in A$ and $x,y\in I$, we have 
\[a[x,y]b=\lim_{j\to \infty}ae_j[x,y]e_jb\in {\rm Id}_{I}([I,I]),\]
where $(e_j)_{j\in J}$ is an approximate unit for $I$. Since ${\rm Id}_A([I,I])$ is the closed linear span of the elements of this type, we obtain ${\rm Id}_A([I,I])={\rm Id}_I([I,I])$. From this observation, it is safe to write ${\rm Id}([I,I])$ instead of ${\rm Id}_A([I,I])$. We will follow this convention in the sequel. 
\end{remark}
\begin{proposition}\label{prop: I' is perfect} Let $I$ be a closed two-sided ideal of a unital $C^*$-algebra $A$. Then $I':={\rm Id}([I,I])$ is perfect. Moreover, it is the biggest perfect ideal contained in $I$.
\end{proposition}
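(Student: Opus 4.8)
The plan is to prove the two inclusions constituting perfectness separately, the nontrivial one resting on a quasicentral approximate unit, and then to dispatch maximality by an elementary ideal computation. First observe that $I'={\rm Id}([I,I])\subseteq I$, because $[I,I]\subseteq I$ and $I$ is a closed two-sided ideal; consequently $[I',I']\subseteq[I,I]$, and hence ${\rm Id}([I',I'])\subseteq{\rm Id}([I,I])=I'$. This is half of what perfectness requires and costs nothing.

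For the reverse inclusion $I'\subseteq{\rm Id}([I',I'])$, since $I'={\rm Id}([I,I])$ is the closed two-sided ideal generated by $[I,I]$, it suffices to show that $[x,y]\in{\rm Id}([I',I'])$ for all $x,y\in I$. The main obstacle here is that, although $[x,y]$ lies in $I'$, the individual elements $x,y$ generally do not, so $[x,y]$ is not literally a commutator of elements of $I'$. I would overcome this exactly as in the proof of Lemma~\ref{lem: liecom}, but with the approximate unit placed inside the smaller ideal $I'$: fix an approximate unit $(e_\lambda)_{\lambda\in\Lambda}$ for $I'$ that is quasicentral in $A$ (it exists by \cite[Theorem I.9.16]{Davidson-book}). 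Since $e_\lambda\in I'$ and $I'$ is two-sided, $e_\lambda x,e_\lambda y\in I'$, so $[e_\lambda x,e_\lambda y]\in[I',I']$. Expanding $[e_\lambda x,e_\lambda y]=e_\lambda x e_\lambda y-e_\lambda y e_\lambda x$ and commuting $e_\lambda$ past $x$ and $y$ with errors bounded by $\|[x,e_\lambda]\|,\|[y,e_\lambda]\|\to 0$, one gets $\|[e_\lambda x,e_\lambda y]-e_\lambda^2[x,y]\|\to 0$. As $[x,y]\in I'$ and $(e_\lambda^2)$ is again an approximate unit for $I'$, we have $e_\lambda^2[x,y]\to[x,y]$, whence $[x,y]=\lim_\lambda[e_\lambda x,e_\lambda y]\in\overline{[I',I']}\subseteq{\rm Id}([I',I'])$.

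Combining the two inclusions yields $I'={\rm Id}([I',I'])$, i.e.\ $I'$ is perfect. Maximality is then immediate: if $J\subseteq I$ is any perfect ideal, then $[J,J]\subseteq[I,I]$ gives $J={\rm Id}([J,J])\subseteq{\rm Id}([I,I])=I'$, so $I'$ contains every perfect ideal of $A$ lying inside $I$ while being itself perfect and contained in $I$. The only step requiring genuine care is the norm convergence $[e_\lambda x,e_\lambda y]\to[x,y]$, and I expect that to be the crux; everything else is formal.

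One could also argue through the commutator subgroups: by Lemma~\ref{lem: expi[I,I]}, $CU_I$ is generated by the $e^{\ri x}$ with $x\in[I,I]\cap A_{\rm sa}\subseteq I'_{\rm sa}$, so $CU_I\subseteq U_{I'}$; since $CU_I$ is topologically perfect, $CU_I=\overline{(CU_I,CU_I)}\subseteq\overline{(U_{I'},U_{I'})}=CU_{I'}\subseteq CU_I$, giving $CU_I=CU_{I'}$. Two applications of Lemma~\ref{lem:idealCU_I} then yield ${\rm Id}([I,I])={\rm Id}([CU_I,CU_I])={\rm Id}([CU_{I'},CU_{I'}])={\rm Id}([I',I'])$, again proving $I'={\rm Id}([I',I'])$. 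I would present the quasicentral argument as the main proof, as it is shorter and self-contained.
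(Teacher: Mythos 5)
Your proof is correct, but it takes a genuinely different route from the paper. The paper's proof is a two-line application of Lemma~\ref{lem: criterion for perfectness}: any character of $I'={\rm Id}([I,I])$ must be trivial (a character of an ideal extends to a character of $A$, which kills all commutators, hence kills $[I,I]$ and then the ideal it generates), so $I'$ is perfect. You instead prove the nontrivial inclusion $I'\subseteq{\rm Id}([I',I'])$ directly, via a quasicentral approximate unit for $I'$; this argument is sound (the estimates $[e_\lambda x,e_\lambda y]-e_\lambda^2[x,y]\to 0$ and $e_\lambda^2[x,y]\to[x,y]$ both check out, the latter because $[x,y]\in[I,I]\subseteq I'$). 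Note, however, that your key computation is exactly a special case of Lemma~\ref{lem: liecom}, which the paper has already proved by the same quasicentral technique: applying that lemma to the ideal $I'$ gives $\overline{[A,A]}\cap I'=\overline{[I',I']}$, and since $[I,I]\subseteq[A,A]\cap I'$ this immediately yields $[I,I]\subseteq\overline{[I',I']}$ and hence $I'\subseteq{\rm Id}([I',I'])$; citing the lemma would have spared you the epsilon-management. Your second, group-theoretic route is also valid and non-circular (Lemmas~\ref{lem: expi[I,I]} and~\ref{lem:idealCU_I} precede the proposition and do not use it), though it essentially anticipates Lemma~\ref{lem:CU_I=CU_I'}, which the paper proves later for other purposes. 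What the paper's approach buys is brevity and reuse: the character criterion is needed again anyway (e.g., in Theorem~\ref{thm:Bratteli}). What yours buys is self-containedness: it avoids the implicit standard fact that characters extend from ideals, which is what makes the paper's ``it is clear'' step true.
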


\begin{lemma}\label{lem: criterion for perfectness}
Let $A$ be a (not necessarily unital) $C^*$-algebra. Then the following two conditions are equivalent. 
\begin{list}{}{}
\item[(1)] $A$ is perfect in itself: $A={\rm Id}([A,A])$. 
\item[(2)] There is no nonzero $*$-homomorphism (i.e., a character) $A\to \C$.
\end{list}
\end{lemma}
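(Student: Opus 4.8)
The plan is to prove the two implications separately, each by contraposition, with the commutative quotient $A/\mathrm{Id}([A,A])$ as the central object.

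For (1) $\Rightarrow$ (2) I would argue contrapositively: suppose there is a nonzero character $\chi\colon A\to\mathbb{C}$. Since $\chi$ is a $*$-homomorphism, it is multiplicative, so $\chi([x,y])=\chi(x)\chi(y)-\chi(y)\chi(x)=0$ for all $x,y\in A$; thus $\chi$ annihilates every commutator. Moreover $\ker\chi$ is a closed two-sided ideal (a character between $C^*$-algebras is automatically contractive, hence continuous), so it contains the closed two-sided ideal generated by $[A,A]$, i.e. $\mathrm{Id}([A,A])\subseteq\ker\chi\subsetneq A$. Hence $A\neq\mathrm{Id}([A,A])$ and (1) fails.

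For (2) $\Rightarrow$ (1), again contrapositively, I would assume $A\neq\mathrm{Id}([A,A])$. Writing $I'=\mathrm{Id}([A,A])$ and letting $\pi\colon A\to A/I'$ be the quotient map, the quotient of a $C^*$-algebra by a closed two-sided ideal is again a $C^*$-algebra, and it is nonzero by assumption. The key observation is that $A/I'$ is \emph{commutative}: for any $x,y\in A$ we have $[x,y]\in I'$, hence $\pi(x)\pi(y)=\pi(y)\pi(x)$. So $A/I'$ is a nonzero commutative $C^*$-algebra, and by Gelfand theory it is isomorphic to $C_0(\Omega)$ for a nonempty locally compact Hausdorff space $\Omega$; in particular it carries a nonzero character $\psi$ (evaluation at a point of $\Omega$). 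Then $\psi\circ\pi\colon A\to\mathbb{C}$ is a nonzero character of $A$, so (2) fails.

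The computation that a character kills commutators and the fact that its kernel is a closed ideal are routine. The only genuinely substantive step — the main obstacle, such as it is — is recognizing that the obstruction to perfectness is precisely a nonzero commutative quotient, and then invoking Gelfand–Naimark to extract a character from it. Here I would take care with the non-unital case: that $\ker\chi$ is closed of codimension one, and that every nonzero commutative $C^*$-algebra (possibly without unit) admits a character, both of which follow from standard commutative $C^*$-theory, passing to the unitization if convenient.
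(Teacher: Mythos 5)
Your proposal is correct and follows essentially the same route as the paper: both directions hinge on the observation that characters kill $[A,A]$ (hence $\mathrm{Id}([A,A])$), and that $A/\mathrm{Id}([A,A])$ is a nonzero abelian $C^*$-algebra from which one extracts a character — the paper takes a pure state on the abelian quotient where you invoke the Gelfand representation, which is the same standard fact in different clothing. The only cosmetic difference is that you phrase both implications contrapositively while the paper proves $(1)\Rightarrow(2)$ directly.
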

\begin{proof}
(1)$\implies$(2) Any $*$-homomorphism $\chi\colon A\to \C$ must vanish on $[A,A]$, hence on ${\rm Id}([A,A])=A$.\\
(2)$\implies$(1) If (1) does not hold, then $I={\rm Id}([A,A])$ is a proper closed two-sided ideal of $A$, and $B=A/I$ is then a nonzero abelian $C^*$-algebra. Thus, any pure state $\chi\colon B\to \C$ is a nonzero $*$-homomorphism. By composing $\chi$ with the quotient map $A\to B$, we obtain a nonzero $*$-homomorphism $A\to \C$, whence (2) fails. 
\end{proof}
\begin{proof}[Proof of Proposition \ref{prop: I' is perfect}]
It is clear that the only $*$-homomorphism ${\rm Id}([I,I])\to \C$ is the trivial homomorphism, so $I'$ is perfect by Lemma \ref{lem: criterion for perfectness}.

The `moreover' part is clear.
\end{proof}

\begin{remark}
Even though the set $\{e^{\ri x}\mid x\in [A,A]_{\rm sa}\}$ generates $CU_A$ as a topological group, it is not in general true that the Lie algebra of $CU_A$ is $\overline{[A,A]_{\rm sa}}$. For example, let $A$ be the hyperfinite II$_1$ factor, and fix $\theta\in [0,1]\setminus \Q$ and consider unitaries $u,v\in U(A)=U_A$ satisfying $(u,v)=e^{2\pi \ri\theta}1$. 
Then because $\theta$ is irrational, $CU_A$ contains $\T$. 
If $\overline{[A,A]}_{\rm sa}$ were the Lie algebra of $CU_A$, then for each $s\in \R$, the condition $e^{\ri st}\in CU_A\,(t\in \R)$ implies that $s=\ri^{-1}\frac{d}{dt}\big|_{t=0}e^{\ri st}\in \overline{[A,A]}_{\rm sa}$, whence $\overline{[A,A]}_{\rm sa}\supset \R$, which is a contradiction (the unique tracial state on $A$ vanishes on $\overline{[A,A]}_{\rm sa}$, but not on $\R$. Note that by \cite[Proposition 2.7]{CP79}, $A_{\rm sa}=\overline{[A,A]}_{\rm sa}+\R1$ holds). 
\end{remark}

\section{Normal subgroup structure of $U_A$}\label{sect4}
Here the core of the paper starts. In this section we prove the results concerning the normal subgroup structure of $U_A$, hereby proving half of Theorem A.
\begin{proposition}\label{prop:perfectnormalsubgroups}
Let $H$ be a closed normal subgroup of $U_A$, for a unital $C^*$-algebra $A$. Then $\overline{(H,H)}$ is a topologically perfect closed normal subgroup of $U_A$ - the biggest topologically perfect subgroup contained in $H$ - and it is of the form $CU_I$, where $I\subseteq A$ is a closed two-sided ideal.
\end{proposition}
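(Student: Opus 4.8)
The plan is to produce an explicit ideal and show the displayed group is exactly $CU_I$ for it. Set $N:=\overline{(H,H)}$ and $I:={\rm Id}((H,H)-1)$, which by Lemma~\ref{lem: commutator and ideal} (applied with $K=H$) equals ${\rm Id}([H,H])$. That $N$ is a closed normal subgroup of $U_A$ is immediate: $(H,H)$ is normal because $H$ is, and conjugation is continuous, so the closure stays normal; as a closed subgroup of the Banach--Lie group $U_A$ it is again a linear Lie group, with Lie algebra $\mathfrak{n}:=\mathfrak{L}(N)\subseteq A_{\rm sa}$ a closed Lie ideal. Writing $\mathfrak{h}:=\mathfrak{L}(H)$, which is likewise a closed Lie ideal, the whole statement reduces to the single claim $N=CU_I$: topological perfectness of $N$ is then inherited from $CU_I$ (recalled in Section~\ref{sect2}), and maximality is formal, since any topologically perfect subgroup $K\le H$ satisfies $K=\overline{(K,K)}\subseteq\overline{(H,H)}=N$, while $N$ itself is topologically perfect and contained in $H$.

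For the inclusion $N\subseteq CU_I$ I would first reduce to the identity component. Because $U_A$ is connected it acts trivially by conjugation on the discrete set $\pi_0(H)=H/H_0$; hence every element of $\pi_0(H)$ is central, $H/H_0$ is abelian, and $(H,H)\subseteq H_0$. The engine is then the following computation: for $a,b\in\mathfrak{h}$ the group commutators $(e^{\ri s a},e^{\ri t b})$ lie in $(H,H)\subseteq 1+I$, so differentiating in $s,t$ forces $[a,b]\in I$, whence all higher nested brackets lie in $I$ as well (as $I$ is an ideal). Since these are also genuine commutators they lie in $\overline{[A,A]}$, and Lemma~\ref{lem: liecom} ($\overline{[A,A]}\cap I=\overline{[I,I]}$) places them in $\overline{[I,I]}$. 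Consequently, for small $a,b$ the Baker--Campbell--Hausdorff logarithm of $(e^{\ri a},e^{\ri b})$ lies in $\overline{[I,I]}\cap A_{\rm sa}$, so by Lemma~\ref{lem: expi[I,I]} this commutator belongs to $CU_I$. Writing elements of $H_0$ as products of exponentials $e^{\ri a}$ with $a\in\mathfrak{h}$ and expanding via the commutator identity $(z,xy)=(z,x)\,x(z,y)x^{-1}$ and its mirror image then propagates this to all of $(H_0,H_0)$, and hence to $\overline{(H,H)}$.

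For the reverse inclusion $CU_I\subseteq N$ I would pass to Lie algebras. The commutator formula applied to $a,b\in\mathfrak{h}$ yields $e^{-t[a,b]}\in N$ for all $t\in\R$, so the self-adjoint elements $\ri[a,b]$ lie in $\mathfrak{n}$; thus $\ri[\mathfrak{h},\mathfrak{h}]\subseteq\mathfrak{n}$, and since $\mathfrak{n}$ is a closed Lie ideal it contains the closed Lie ideal generated by $[\mathfrak{h},\mathfrak{h}]$. The point is to identify this with $\overline{[I,I]}\cap A_{\rm sa}$: using again Lemma~\ref{lem: liecom}, which in the sharpened form also yields $\overline{[I,A]}=\overline{[I,I]}$, one upgrades the bracket data $[\mathfrak{h},\mathfrak{h}]$ to all of $\overline{[I,I]}\cap A_{\rm sa}$, so that $\mathfrak{n}\supseteq\overline{[I,I]}\cap A_{\rm sa}$ and therefore $N\supseteq CU_I$, again by Lemma~\ref{lem: expi[I,I]}.

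I expect the genuine difficulty to be concentrated exactly in matching the Lie-theoretic data to the $C^*$-ideal $I$ in the two places above --- that the brackets $[\mathfrak{h},\mathfrak{h}]$ coming from the possibly small and disconnected subgroup $H$ already generate, as a closed Lie ideal, the full self-adjoint commutator space $\overline{[I,I]}\cap A_{\rm sa}$, and, dually, that commutators of elements lying in different connected components of $H$ are still captured by $CU_I$. Both hinge on the structure theory of closed Lie ideals of $A_{\rm sa}$, for which Lemma~\ref{lem: liecom} (in the form $\overline{[I,A]}=\overline{[I,I]}=\overline{[A,A]}\cap I$) is the crucial analytic input; the bookkeeping around the disconnectedness of $H$, handled through $H/H_0$ being abelian together with the partial result $(H,H_0)\subseteq CU_I$, is the step I would be most careful about.
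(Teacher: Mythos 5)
Your reduction of the statement to the single claim $\overline{(H,H)}=CU_I$, together with the maximality and normality arguments in your first paragraph, is fine; the problem is that your proof of that claim rests on a step which is false in infinite dimensions. You treat the closed subgroups $H$ and $N=\overline{(H,H)}$ of $U_A$ as linear Lie subgroups with Lie algebras $\mathfrak{h},\mathfrak{n}$, and, crucially, you write elements of $H_0$ as products of exponentials $e^{\ri a}$ with $a\in\mathfrak{h}$. Cartan's closed-subgroup theorem fails for Banach--Lie groups: a closed subgroup of $U_A$ need not be a Lie subgroup, its identity component need not be open, and $H_0$ need not be generated by the one-parameter subgroups it contains (the set $\mathfrak{L}(H)$ is always defined, but $\exp$ need not be locally onto $H$). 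The paper is explicitly careful about this: Definition~\ref{def:Liegrp} makes ``linear Lie subgroup'' an additional property rather than an automatic one, and Lemma~\ref{lem:AdU(A)normality} adds ``Lie'' as a hypothesis precisely because it is not automatic for closed normal subgroups. Your ``engine'' computation and your reverse inclusion both silently rely on these unproved facts, so the argument collapses for a general closed normal $H$. This is exactly the difficulty that Robert's Theorem 3.3 of \cite{Rob19} was designed to overcome for \emph{arbitrary} closed normal subgroups, and it is what the paper invokes: setting $I={\rm Id}([H,A])$ (not ${\rm Id}([H,H])$ --- the two ideals coincide only a posteriori), one gets $\overline{(H,U_A)}=\overline{(U_I,U_A)}=CU_{I,A}=CU_I$ from \cite[Theorem 3.3]{Rob19} and Lemma~\ref{lem:U_IU_I=U_AU_A}; since $CU_I$ is topologically perfect and $\overline{(H,U_A)}\subseteq H$ (normality and closedness of $H$), it must lie inside $\overline{(H,H)}$, which gives $\overline{(H,H)}=\overline{(H,U_A)}=CU_I$ without any analysis of the internal Lie structure of $H$.

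Two further gaps would remain even if $H$ were granted to be a Lie subgroup. First, the disconnectedness issue you flag at the end is never resolved: your engine controls only commutators of exponentials, hence at best $(H_0,H_0)$ and $(H,H_0)$; knowing $(H,H)\subseteq H_0$ does not place a commutator $(h,k)$ with $h,k$ in nontrivial components inside $CU_I$, so the inclusion $\overline{(H,H)}\subseteq CU_I$ is not established. Second, in the reverse inclusion the phrase ``one upgrades the bracket data $[\mathfrak{h},\mathfrak{h}]$ to all of $\overline{[I,I]}\cap A_{\rm sa}$'' is an assertion, not a proof: identifying the closed Lie ideal generated by $[\mathfrak{h},\mathfrak{h}]$ with $\overline{[I,I]}\cap A_{\rm sa}$ for the associative ideal $I={\rm Id}([H,H])$ is precisely the nontrivial structure theory of closed Lie ideals (the content of \cite{BKS08,Robert16}), and it does not follow from Lemma~\ref{lem: liecom} alone.
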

\begin{proof}
Let $H$ be a closed normal subgroup of $U_A$. We prove that $\overline{(H,H)}$ is topologically perfect. It will be automatically normal, since it is normal in $H$ and $H$ is normal in $U_A$, so by \cite[Theorem 3.7]{Rob19} it is normal in $U_A$. It will be also the biggest subgroup with such a property, since every closed topologically perfect subgroup $H'\leq H$ must be contained in $\overline{(H,H)}$, which follows from the equality $H'=\overline{(H',H')}$.

Set $I:={\rm{Id}}([H,A])$. We show \[\overline{(H,H)}=\overline{(H,U_A)}=CU_I,\] and this will finish the proof since $CU_I$ is topologically perfect. In fact, it is enough to show that $\overline{(H,U_A)}=CU_I$ since the former will be then a topologically perfect subgroup of $H$ which thus must be contained in $\overline{(H,H)}$, and obviously contains $\overline{(H,H)}$. However, by \cite[Theorem 3.3]{Rob19} we have $\overline{(H,U_A)}=\overline{(U_I,U_A)}$ and the latter is equal to $CU_I$ by Lemma~\ref{lem:U_IU_I=U_AU_A}.
\end{proof}

\begin{lemma}\label{lem:CU_I=CU_I'}
Let $I$ be a closed two-sided ideal of a unital $C^*$-algebra $A$. Then $CU_I=CU_{I'}$, where $I'={\rm{Id}}([I,I])$.
\end{lemma}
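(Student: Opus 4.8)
The plan is to prove the two inclusions separately, with all of the nontrivial content lying in the inclusion $CU_I\subseteq CU_{I'}$.

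The inclusion $CU_{I'}\subseteq CU_I$ is immediate: since $I'={\rm Id}([I,I])\subseteq I$, we have $U_{I'}\subseteq U_I$, hence $(U_{I'},U_{I'})\subseteq (U_I,U_I)$, and taking closures yields $CU_{I'}\subseteq CU_I$. For the reverse inclusion I would first invoke Lemma~\ref{lem: expi[I,I]}, which states that $CU_I$ is generated as a topological group by $\{e^{\ri x}\mid x\in [I,I]\cap A_{\rm sa}\}$. Since $CU_{I'}$ is a closed subgroup, it therefore suffices to show that $e^{\ri x}\in CU_{I'}$ for every $x\in [I,I]\cap A_{\rm sa}$.

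The heart of the argument is the inclusion $[I,I]\subseteq \overline{[I',I']}$. This follows from Lemma~\ref{lem: liecom} applied to the closed two-sided ideal $I'$: one has $\overline{[A,A]}\cap I'=\overline{[I',I']}$, and since $[I,I]\subseteq [A,A]\subseteq \overline{[A,A]}$ while $[I,I]\subseteq {\rm Id}([I,I])=I'$ by the very definition of $I'$, we obtain $[I,I]\subseteq \overline{[A,A]}\cap I'=\overline{[I',I']}$. This is precisely the step that lets us pass from $I$ to its perfect core $I'$, and it is the one place where the choice $I'={\rm Id}([I,I])$ is essential.

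It remains to convert this norm-approximation into a statement about exponentials. Given $x\in [I,I]\cap A_{\rm sa}\subseteq \overline{[I',I']}$, choose $y_n\in [I',I']$ with $y_n\to x$. Since $I'$ is a $*$-ideal, the set $[I',I']$ is self-adjoint, because $(ab-ba)^*=b^*a^*-a^*b^*\in [I',I']$ whenever $a,b\in I'$; hence the self-adjoint parts $\tfrac{1}{2}(y_n+y_n^*)$ lie in $[I',I']\cap A_{\rm sa}$ and still converge to $x$. Each $e^{\frac{\ri}{2}(y_n+y_n^*)}$ belongs to $CU_{I'}$ by Lemma~\ref{lem: expi[I,I]} applied to $I'$, so by continuity of the exponential map together with the closedness of $CU_{I'}$ we conclude $e^{\ri x}\in CU_{I'}$, completing the inclusion and hence the proof. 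The self-adjoint approximation is routine once one notes the self-adjointness of $[I',I']$; the genuinely decisive ingredient is Lemma~\ref{lem: liecom}, which supplies exactly the inclusion $[I,I]\subseteq\overline{[I',I']}$ that drives the whole argument.
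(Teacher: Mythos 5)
Your proof is correct, but it takes a genuinely different route from the paper's. The paper argues structurally: from Lemma~\ref{lem: expi[I,I]} it deduces only the weaker containment $CU_I\subseteq U_{I'}$, and then invokes Proposition~\ref{prop:perfectsubgrpsofU_A}'s precursor, Proposition~\ref{prop:perfectnormalsubgroups}, which says $CU_{I'}=\overline{(U_{I'},U_{I'})}$ is the \emph{biggest} topologically perfect normal subgroup of $U_{I'}$; since $CU_I$ is itself topologically perfect (Robert's Theorem~3.6) and normal in $U_{I'}$, it must sit inside $CU_{I'}$. You instead work element-by-element: the key inclusion $[I,I]\subseteq\overline{[A,A]}\cap I'=\overline{[I',I']}$, obtained by applying Lemma~\ref{lem: liecom} to $I'$, shows each generator $e^{\ri x}$, $x\in[I,I]\cap A_{\rm sa}$, of $CU_I$ lies in $CU_{I'}$ (your self-adjoint approximation step is fine, since $[I',I']$ is $*$-closed and $CU_{I'}$ is closed). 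Your route is in fact the same technique the paper uses to prove Lemma~\ref{lem:U_IU_I=U_AU_A}, just applied to the pair $(I',I)$ in place of $(I,A)$. What you gain is independence from the normal-subgroup machinery: your argument needs neither Proposition~\ref{prop:perfectnormalsubgroups} (hence neither Robert's Theorem~3.3 nor Theorem~3.7) nor the topological perfectness of $CU_I$, only Lemmas~\ref{lem: liecom} and~\ref{lem: expi[I,I]}, and it yields the slightly sharper conclusion that the generators of $CU_I$ literally lie in $CU_{I'}$. What the paper's proof buys is brevity given its already-developed toolkit, and it illustrates how the maximality characterization of $CU_J$ inside $U_J$ gets used downstream.
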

\begin{proof}
By Lemma \ref{lem: expi[I,I]}, the set $\{e^{\ri x}\mid x\in [I,I]\cap A_{\rm sa}\}$ generates $CU_I$ as a topological group and if $x\in [I,I]\cap A_{\rm sa}\subseteq I'$ then clearly $e^{\ri x}\in U_{I'}$. Therefore, we have $CU_I\subseteq U_{I'}$. By Proposition~\ref{prop:perfectnormalsubgroups}, $CU_{I'}=\overline{(U_{I'},U_{I'})}$ is the biggest topologically perfect normal subgroup of $U_{I'}$. On the other hand, $CU_I$ is a topologically perfect normal subgroup of $U_{I'}$, thus $CU_I\subseteq CU_{I'}$. Since $I'\subseteq I$, we have also $CU_{I'}\subseteq CU_I$, and so $CU_I=CU_{I'}$ holds.
\end{proof}
We will need the next well-known result. We include the proof for completeness. 
\begin{lemma}\label{lem: U_0(A) generates A}
Let $A$ be a unital $C^*$-algebra. Then $U_A$ spans $A$. 
\end{lemma}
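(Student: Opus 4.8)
The plan is to prove that the connected component $U_A$ of the identity in the unitary group linearly spans $A$. The key observation is that every self-adjoint element of $A$ is, up to scaling, the "logarithm" of a unitary lying in $U_A$, and that self-adjoint elements together with their $\ri$-multiples span all of $A$.

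First I would reduce the problem to self-adjoint elements. Since $A = A_{\rm sa} + \ri A_{\rm sa}$ (every $a \in A$ decomposes as $a = \frac{a+a^*}{2} + \ri \cdot \frac{a - a^*}{2\ri}$ with both summands self-adjoint), it suffices to show that $A_{\rm sa}$ is contained in the linear span of $U_A$. Then the full span, being a complex subspace containing $A_{\rm sa}$ and $\ri A_{\rm sa}$, is all of $A$.

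For a fixed self-adjoint $h \in A_{\rm sa}$, I would consider the one-parameter family $u_t := e^{\ri t h}$ for $t \in \R$. Each $u_t$ lies in $U_A$ because $\ri h$ is skew-adjoint and the exponential of a skew-adjoint element is unitary, and moreover $t \mapsto u_t$ is a norm-continuous path from $1$, so the image lies in the connected component $U_A$. The map $t \mapsto u_t$ is norm-differentiable with $\frac{d}{dt}\big|_{t=0} u_t = \ri h$. Hence $\ri h$ lies in the closed linear span of $U_A$, as it is a norm-limit of difference quotients $\frac{1}{t}(u_t - 1)$, each of which is a linear combination of the unitaries $u_t, 1 \in U_A$. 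Therefore $h = -\ri \cdot (\ri h)$ lies in the (complex) linear span of $U_A$ as well. Since $h \in A_{\rm sa}$ was arbitrary, $A_{\rm sa} \subseteq \Span(U_A)$, and combined with the reduction above this gives $\Span(U_A) = A$.

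I do not anticipate a serious obstacle here; the statement is genuinely elementary once one recalls that $A_{\rm sa}$ is precisely the Lie algebra of $U_A$ and that the exponential map produces the needed unitaries. The only point requiring a little care is confirming that the $u_t$ all lie in the \emph{connected component} $U_A$ rather than merely in $U(A)$, but this is immediate from the continuity of $t \mapsto e^{\ri t h}$ starting at the identity. If one prefers to avoid differentiation entirely, an alternative is to note directly that for small $t$, $\frac{1}{t}\bigl(e^{\ri t h} - 1\bigr) \to \ri h$ in norm, which is the same computation phrased without invoking differentiability of the path.
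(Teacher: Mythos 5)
Your argument has a genuine gap at its final step. From the norm convergence $\frac{1}{t}\bigl(e^{\ri t h}-1\bigr)\to \ri h$ you correctly conclude that $\ri h$ lies in the \emph{closed} linear span of $U_A$ (each difference quotient is a combination of $u_t$ and $1$), but you then silently drop the word ``closed'' and assert that $h$ lies in the linear span itself. That inference is invalid: the algebraic span of an infinite set of unitaries need not be a closed subspace, and a norm-limit of elements of the span need not belong to the span. What your argument actually proves is that $\Span(U_A)$ is \emph{dense} in $A$, which is strictly weaker than the lemma as stated. The distinction is not cosmetic in this paper: in the proof of Lemma~\ref{lem:CU_Iinjective} the lemma is invoked to conclude that the set $\{xu-ux\mid x\in I_{\rm sa},\, u\in U_I\}$ spans $[I,I]$ -- an algebraic span, by the paper's definition of $[I,J]$ -- from which a single commutator lying outside the ideal $J$ is extracted; with mere density of the span that step would need to be rerun using closedness of $J$.

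The paper obtains the genuine span by a purely algebraic device: for a self-adjoint contraction $x$, the element $u(x)=x+\ri(1-x^2)^{\frac{1}{2}}$ is a unitary satisfying $x=\frac{1}{2}\bigl(u(x)+u(x)^*\bigr)$, so $x$ is literally a linear combination of two unitaries, with no limiting process. Membership in the connected component is then checked by the explicit path $[0,1]\ni t\mapsto e^{-\frac{\pi\ri}{2}(1-t)}u(tx)$, which joins $1$ to $u(x)$ inside $U(A)$; hence $u(x)$, and also $u(x)^*=u(x)^{-1}$, lies in $U_A$. If you want to repair your proof while keeping its structure, keep your reduction to $A_{\rm sa}$ and your (correct) observation that norm-continuous paths from $1$ stay in $U_A$, but replace the differentiation step by this functional-calculus decomposition: the point is to express a self-adjoint element as a \emph{finite} combination of elements of $U_A$, which a limit of difference quotients cannot deliver.
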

\begin{proof}
Since $A_{\rm sa}$ spans $A$, it suffices to show that any self-adjoint contraction $x\in A$ is in the linear span of $U_A$, and note that the standard argument produces the unitary $u(x)=x+\ri (1-x^2)^{\frac{1}{2}}$ such that $x=\frac{1}{2}(u(x)+u(x)^*)$, and $u(x)$ is homotopic to 1 by the continuous path $[0,1]\ni t\mapsto e^{-\frac{\pi \ri}{2}(1-t)}u(tx)\in U(A)$. 
\end{proof}

\begin{lemma}\label{lem:CU_Iinjective}
Let $I$ and $J$ be perfect ideals of a unital $C^*$-algebra $A$. If $CU_I=CU_J$, then $I=J$.
\end{lemma}
\begin{proof}
Let $I$ and $J$ be perfect ideals such that $CU_I=CU_J$. We show that $I=J$. 
Assume by contradiction that $I\not \subseteq J$. 
By the perfectness of $I$ and $I\not\subseteq J$, we have $[I,I]\not \subseteq J$. 
Note that since $U_I$ spans $\widetilde{I}$ by Lemma \ref{lem: U_0(A) generates A}, 
the set $\{xu-ux\mid x\in I_{\rm sa}, u\in U_I\}$ spans $[I,I]$.
Thus there exists $x\in I_{{\rm sa}}$ and $u\in U_I$ such that $ux-xu\in [I,I]\setminus J$, whence $uxu^*-x=u(xu^*)-(xu^*)u\in [I,I]\setminus J$. 
We may also assume that $\|x\|$ is small enough so that $uxu^*-x$ lies in a neighborhood of $0$ in $A$ where $\exp$ is a local homeomorphism. Then by Lemma \ref{lem: expi[I,I]}, we have  $v:=\exp(\ri (uxu^*-x))\in CU_I$, however $v\notin U_J$, thus $v\notin CU_J$, which is a contradiction. Therefore, $I\subseteq J$ holds, and by symmetry, the same argument shows $J\subseteq I$ as well. 
\end{proof}

\begin{theorem}\label{thm:perfectsubgrpsofU_A}
There is a one-to-one bijection between perfect ideals $I\subseteq A$ and topologically perfect closed normal subgroups of $U_A$ of the form $I\to CU_I$.
\end{theorem}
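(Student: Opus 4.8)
The plan is to verify that the assignment $I\mapsto CU_I$ is well-defined into the stated target, and that it is injective and surjective; the substantive work has already been done in the preceding lemmas, so the proof is largely an assembly. For well-definedness, note that for any closed two-sided ideal $I$ the subgroup $CU_I$ is topologically perfect (as recorded in Section~\ref{sect2}, following \cite[Theorem 3.6]{Rob19}) and is a closed normal subgroup of $U_A$ (it is normal in $U_I$, hence in $U_A$ by \cite[Theorem 3.7]{Rob19}). Thus for every perfect ideal $I$ the group $CU_I$ indeed lies among the topologically perfect closed normal subgroups of $U_A$, so the map is well-defined.

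Injectivity is exactly Lemma~\ref{lem:CU_Iinjective}: if $I$ and $J$ are perfect ideals with $CU_I=CU_J$, then $I=J$. Nothing further is needed here.

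For surjectivity I would start from an arbitrary topologically perfect closed normal subgroup $H\leq U_A$. Since $H$ is topologically perfect, $H=\overline{(H,H)}$, and Proposition~\ref{prop:perfectnormalsubgroups} then supplies a closed two-sided ideal $I$ with $H=CU_I$. The one point requiring care is that the ideal $I$ produced this way need not be perfect, so it may not lie in the domain of our map; to remedy this I would pass to $I':=\mathrm{Id}([I,I])$. By Proposition~\ref{prop: I' is perfect} the ideal $I'$ is perfect, and by Lemma~\ref{lem:CU_I=CU_I'} we have $CU_I=CU_{I'}$. Hence $H=CU_{I'}$ with $I'$ perfect, which shows that the map hits every topologically perfect closed normal subgroup.

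The only genuine obstacles have already been absorbed into the lemmas: the detection argument of Lemma~\ref{lem:CU_Iinjective}, which uses that for a perfect ideal the commutators $[I,I]$ are rich enough to separate $I$ via exponentials lying in $CU_I$, and the combination of the idempotency $CU_I=CU_{I'}$ (Lemma~\ref{lem:CU_I=CU_I'}) with Proposition~\ref{prop:perfectnormalsubgroups}, which guarantees that every topologically perfect closed normal subgroup is already of the form $CU_I$. Granting these inputs, the theorem follows by assembling well-definedness, injectivity, and the surjectivity argument above.
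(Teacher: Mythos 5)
Your proposal is correct and follows essentially the same route as the paper's own proof: injectivity via Lemma~\ref{lem:CU_Iinjective}, and surjectivity by writing $H=\overline{(H,H)}=CU_I$ via Proposition~\ref{prop:perfectnormalsubgroups} and then replacing $I$ by the perfect ideal $I'=\mathrm{Id}([I,I])$ using Proposition~\ref{prop: I' is perfect} and Lemma~\ref{lem:CU_I=CU_I'}. The extra care you take with well-definedness (normality and topological perfectness of $CU_I$) is implicit in the paper's first line and is handled correctly.
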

\begin{proof}
For every two-sided ideal $I\subseteq A$, $CU_I$ is a topologically perfect closed normal subgroup of $U_A$.

By Lemma~\ref{lem:CU_Iinjective}, the map $I\to CU_I$ is injective when restricted to perfect ideals. 
We check that the map $I\to CU_I$ is onto the set of all topologically perfect closed normal subgroups of $U_A$. Let $H$ be a topologically perfect closed normal subgroup of $U_A$. Then $H=\overline{(H,H)}$, where the right hand side is by Proposition~\ref{prop:perfectnormalsubgroups}  equal to $CU_I$ for some ideal $I\subseteq A$. But then for $I':=\mathrm{Id}([I,I])$ we have that $I'$ is perfect by Proposition~\ref{prop: I' is perfect} and $CU_I=CU_{I'}$ by Lemma~\ref{lem:CU_I=CU_I'}.
\end{proof}
\begin{definition}
For a closed two-sided ideal $I$ of a unital $C^*$-algebra $A$, we define $N_I=\{u\in U_A\mid (u,U_A)\subseteq \widetilde{I}\}$. 
\end{definition}
\begin{lemma}\label{lem:N_Iequivdef}
Let $A$ be a unital $C^*$-algebra, $I$ be a closed two-sided ideal of $A$. Then $N_I=\{u\in U_A\mid (u,U_A)\subseteq CU_I\}$ holds.
\end{lemma}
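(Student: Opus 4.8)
The inclusion $\{u\in U_A\mid (u,U_A)\subseteq CU_I\}\subseteq N_I$ is immediate, since $CU_I\subseteq U_I\subseteq 1+I\subseteq\widetilde I$. The whole content is the reverse inclusion: I must show that if $u\in U_A$ satisfies $(u,v)\in\widetilde I=\C 1+I$ for every $v\in U_A$, then in fact $(u,v)\in CU_I$ for every $v$.

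The first step is to control the image of $u$ in $B:=A/I$. Writing $\pi\colon A\to B$ for the quotient map, the hypothesis says that each $\pi((u,v))$ is a unitary scalar $\lambda(v)1$; using the commutator identity~(\ref{(2.1)'Robert}) with $z=u$ one checks that $\lambda\colon U_A\to\T$ is a continuous homomorphism. The crux is that $\lambda\equiv 1$, and this is the most delicate point. I would prove it by a spectral argument: for $v=e^{\ri b}$ with $b\in A_{\rm sa}$, $\|b\|<\tfrac{\pi}{2}$, the element $\pi(u)\pi(v)\pi(u)^*=\lambda(v)\pi(v)$ is unitarily equivalent to $\pi(v)$, so $\mathrm{sp}(\pi(v))$ is invariant under rotation by $\lambda(v)$; since $\pi(v)=e^{\ri\pi(b)}$ has spectrum confined to an open arc of length $<\pi$ about $1$, no nontrivial rotation can preserve it, forcing $\lambda(v)=1$. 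As such $v$ generate the connected group $U_A$ and $\lambda$ is a homomorphism, $\lambda\equiv 1$. Hence $\pi(u)$ commutes with $\pi(v)$ for all $v$; since $\{\pi(v):v\in U_A\}$ spans $B$ by Lemma~\ref{lem: U_0(A) generates A}, $\pi(u)$ is central in $B$, and every $(u,v)$ lands in $U(A)\cap(1+I)$.

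Next I reduce to showing $(u,e^{\ri b})\in CU_I$ for $b\in A_{\rm sa}$ with $\|b\|$ small: by~(\ref{(2.1)'Robert}) and the normality of $CU_I$ in $U_A$, once this is known for a generating set of $v$ it follows for all $v$. Write $b':=ubu^{-1}\in A_{\rm sa}$ and $c:=b'-b$. Centrality of $\pi(u)$ gives $\pi(b')=\pi(b)$, so $c\in I_{\rm sa}$; on the other hand $c=\mathrm{Ad}(u)(b)-b$, and a Duhamel/telescoping computation shows $\mathrm{Ad}(u)(b)-b\in\overline{[A,A]}$ for any $u\in U_A$, $b\in A_{\rm sa}$. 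Therefore $c\in\overline{[A,A]}\cap I=\overline{[I,I]}$ by Lemma~\ref{lem: liecom}, and being self-adjoint we get $e^{\ri tc}\in CU_I$ for all $t$ by Lemma~\ref{lem: expi[I,I]} together with the closedness of $CU_I$.

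It remains to pass from $e^{\ri c}\in CU_I$ to the commutator $(u,e^{\ri b})=e^{\ri b'}e^{-\ri b}=e^{\ri(b+c)}e^{-\ri b}$; this bridging step is the second place requiring care. Here I would invoke the Trotter product formula: with $\beta:=e^{\ri b/n}$ and $\kappa:=e^{\ri c/n}\in CU_I$,
\[
e^{\ri(b+c)}e^{-\ri b}=\lim_{n\to\infty}(\beta\kappa)^n\beta^{-n}=\lim_{n\to\infty}\prod_{j=1}^n\beta^j\kappa\beta^{-j}.
\]
Each factor $\beta^j\kappa\beta^{-j}$ lies in $CU_I$ because $\kappa\in CU_I$ and $CU_I$ is normal in $U_A$; hence every finite product lies in $CU_I$, and, $CU_I$ being closed, so does the limit. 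This gives $(u,e^{\ri b})\in CU_I$, and the reduction of the previous paragraph then yields $(u,v)\in CU_I$ for all $v\in U_A$, completing the proof.
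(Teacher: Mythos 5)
Your proof is correct, but it takes a genuinely different route from the paper's. The paper disposes of the lemma in two lines: the nontrivial inclusion $N_I\subseteq\{u\in U_A\mid (u,U_A)\subseteq CU_I\}$ is exactly \cite[Lemma 3.2]{Rob19} (which gives $(u,U_A)\subseteq CU_{I,A}$) combined with Lemma~\ref{lem:U_IU_I=U_AU_A} (which gives $CU_{I,A}=CU_I$). What you have done is re-prove Robert's lemma from scratch, with different tools at both key steps. First, where Robert (and this paper, when it reproduces his argument inside Lemma~\ref{lem:AdU(A)normality}) differentiates $(u,e^{\ri tb})\in\widetilde I$ at $t=0$ and invokes the Wielandt--Wintner theorem --- a commutator in a unital Banach algebra is never a nonzero scalar multiple of the unit --- to conclude $ubu^*-b\in I$, you argue at the group level: the scalars $\lambda(v)$ form a homomorphism $U_A\to\T$ by (\ref{(2.1)'Robert}), which you kill by the spectral-rotation argument on the generators $e^{\ri b}$, $\|b\|<\tfrac{\pi}{2}$; this is sound, since a nonempty compact subset of an arc of length $<\pi$ has no nontrivial rotational symmetry. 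Second, where Robert uses the Baker--Campbell--Hausdorff series to place $\log(u,e^{\ri b})$ in $I\cap\overline{[A,A]}=\overline{[I,I]}$ and then applies Lemma~\ref{lem: expi[I,I]}, you avoid logarithms and BCH convergence entirely: from $c:=ubu^*-b\in\overline{[I,I]}_{\rm sa}$ (note that $c$ is literally the single commutator $[u,bu^*]$, so your Duhamel/telescoping computation is not even needed) you obtain $(u,e^{\ri b})=e^{\ri(b+c)}e^{-\ri b}\in CU_I$ via Trotter together with the telescoping identity $(\beta\kappa)^n\beta^{-n}=\prod_{j=1}^n\beta^j\kappa\beta^{-j}$, using only that $CU_I$ is closed and normal in $U_A$ --- and this identity does check out. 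As for what each approach buys: the paper's proof is short because it leans on \cite{Rob19}; yours is self-contained, replaces Wielandt--Wintner by elementary spectral theory, and its final step needs no smallness of $\|b\|$, so the same Trotter trick could substitute for the BCH step in Lemma~\ref{lem:AdU(A)normality} as well. One cosmetic remark: when $I=A$ the quotient is zero and your $\lambda$ is undefined, but the lemma is trivial there since $(U_A,U_A)\subseteq CU_A$ by definition.
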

\begin{proof}
Denote the set on the right hand side by $M_I$. Clearly, $M_I\subseteq N_I$. The converse follows immediately from \cite[Lemma 3.2]{Rob19} and Lemma \ref{lem:U_IU_I=U_AU_A}.
\end{proof}
As an immediate consequence of Lemmas~\ref{lem:CU_I=CU_I'} and~\ref{lem:N_Iequivdef} we get the following corollary.
\begin{corollary}\label{cor: N_I=N_I'}
Let $A$ be a unital $C^*$-algebra, $I$ be a closed two-sided ideal of $A$. Then $N_I=N_{I'}$ holds.
\end{corollary}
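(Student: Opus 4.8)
The plan is to reduce everything to the two cited lemmas and avoid touching the definition of $N_I$ directly. The key observation is that, although $N_I$ is defined through the set $\widetilde{I}$, Lemma~\ref{lem:N_Iequivdef} already recharacterizes it intrinsically in terms of the subgroup $CU_I$: it asserts $N_I=\{u\in U_A\mid (u,U_A)\subseteq CU_I\}$. I would first record that the very same characterization applies verbatim to the ideal $I'={\rm Id}([I,I])$, giving $N_{I'}=\{u\in U_A\mid (u,U_A)\subseteq CU_{I'}\}$. After this step, both $N_I$ and $N_{I'}$ are described by one and the same membership condition on $u\in U_A$, the only difference being that the target subgroup is $CU_I$ in the first case and $CU_{I'}$ in the second.

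It then remains only to observe that these two conditions are literally identical, which is precisely the content of Lemma~\ref{lem:CU_I=CU_I'}, namely $CU_I=CU_{I'}$. Substituting this equality into the two characterizations above forces $N_I=N_{I'}$ at once. I do not expect any genuine obstacle: all the substance has already been absorbed into Lemmas~\ref{lem:CU_I=CU_I'} and~\ref{lem:N_Iequivdef}, so the corollary is a purely formal consequence. The only point meriting (minimal) care is to make explicit that, once Lemma~\ref{lem:N_Iequivdef} is invoked, the construction $J\mapsto N_J$ depends on the ideal $J$ solely through the subgroup $CU_J$; granting this, the equality of groups $CU_I=CU_{I'}$ immediately propagates to the equality of sets $N_I=N_{I'}$.
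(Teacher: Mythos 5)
Your proposal is correct and is exactly the argument the paper intends: it states the corollary as an immediate consequence of Lemmas~\ref{lem:CU_I=CU_I'} and~\ref{lem:N_Iequivdef}, i.e.\ apply Lemma~\ref{lem:N_Iequivdef} to both $I$ and the closed two-sided ideal $I'$ and then substitute $CU_I=CU_{I'}$.
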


\begin{theorem}\label{thm:setofnormalsubgrps}
Let $A$ be a unital $C^*$-algebra. The set of all closed normal subgroups of $U_A$ is equal to the disjoint union \[\bigsqcup_{I\subseteq A{{\rm \,\,perfect\, ideal}}} \{H\le U_A\,\mid\, CU_I\leq H\leq N_I\}.\]

\end{theorem}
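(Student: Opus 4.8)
The plan is to prove the stated set equality by establishing three facts: (i) every closed normal subgroup of $U_A$ lies in the interval attached to some perfect ideal, (ii) every closed subgroup sandwiched between $CU_I$ and $N_I$ is normal (hence a closed normal subgroup), and (iii) the intervals indexed by distinct perfect ideals are disjoint. The key preliminary observation, which I would record first, is that whenever $CU_I\le H\le N_I$ one necessarily has $\overline{(H,H)}=CU_I$: the inclusion $H\le N_I$ forces $(H,U_A)\subseteq CU_I$ by Lemma~\ref{lem:N_Iequivdef}, hence $\overline{(H,H)}\subseteq CU_I$, while topological perfectness of $CU_I$ gives $CU_I=\overline{(CU_I,CU_I)}\subseteq\overline{(H,H)}$ from $CU_I\le H$. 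This single identity simultaneously drives coverage and disjointness. It is also illuminating to note that $u\in N_I$ means precisely that the image of $u$ in $U_A/CU_I$ is central, so $N_I/CU_I=\mathcal Z(U_A/CU_I)$; in particular $N_I$ is a closed normal subgroup containing the closed normal subgroup $CU_I$, so each interval is well defined.

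For coverage, I would take an arbitrary closed normal subgroup $H\le U_A$ and invoke Proposition~\ref{prop:perfectnormalsubgroups} to write $\overline{(H,H)}=\overline{(H,U_A)}=CU_J$ for a closed two-sided ideal $J$. Passing to $I:=\mathrm{Id}([J,J])$, which is perfect by Proposition~\ref{prop: I' is perfect} and satisfies $CU_J=CU_I$ by Lemma~\ref{lem:CU_I=CU_I'}, I obtain $\overline{(H,H)}=CU_I$ with $I$ perfect. Then $CU_I=\overline{(H,H)}\subseteq H$, while $(H,U_A)\subseteq\overline{(H,U_A)}=CU_I$ shows $(u,U_A)\subseteq CU_I$ for every $u\in H$, i.e.\ $u\in N_I$ by Lemma~\ref{lem:N_Iequivdef}. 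Hence $CU_I\le H\le N_I$, placing $H$ in the interval of $I$.

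For the reverse inclusion I would verify normality by a direct commutator computation: given a closed subgroup $H$ with $CU_I\le H\le N_I$ and $I$ perfect, for $u\in H$ and $w\in U_A$ write $wuw^{-1}=(w,u)\,u$, where $(w,u)=(u,w)^{-1}\in CU_I\le H$ since $u\in N_I$ yields $(u,w)\in CU_I$. Thus $wuw^{-1}\in H$, so $H\trianglelefteq U_A$, and being closed it is a closed normal subgroup. Disjointness is then immediate from the preliminary identity: if $H$ lies in the intervals of two perfect ideals $I$ and $J$, then $CU_I=\overline{(H,H)}=CU_J$, whence $I=J$ by the injectivity of $I\mapsto CU_I$ on perfect ideals (Lemma~\ref{lem:CU_Iinjective}). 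The only genuinely non-formal input is the coverage step, whose entire force is concentrated in Proposition~\ref{prop:perfectnormalsubgroups} (resting in turn on Robert's normal-generation results) together with the reduction of $J$ to a perfect ideal; once $\overline{(H,H)}=CU_I$ with $I$ perfect is secured, membership in the correct interval and the disjointness of the family follow mechanically.
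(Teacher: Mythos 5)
Your proposal is correct and follows essentially the same route as the paper: coverage via $\overline{(H,H)}=\overline{(H,U_A)}=CU_I$ with $I$ replaced by the perfect ideal $\mathrm{Id}([I,I])$ (Lemma~\ref{lem:CU_I=CU_I'}, Proposition~\ref{prop: I' is perfect}), normality of sandwiched subgroups by the commutator identity $wuw^{-1}=(w,u)u$ with $(w,u)\in CU_I$, and disjointness from $\overline{(H,H)}=CU_I$ together with Lemma~\ref{lem:CU_Iinjective}. The only cosmetic caveat is that the equality $\overline{(H,H)}=\overline{(H,U_A)}$ you attribute to Proposition~\ref{prop:perfectnormalsubgroups} appears in that proposition's proof rather than its statement, which is exactly how the paper itself obtains it (via Robert's Theorem~3.3 and Lemma~\ref{lem:U_IU_I=U_AU_A}).
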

\begin{proof}
Let $H\subseteq U_A$ be a closed normal subgroup. By \cite[Theorem 3.3]{Rob19} the ideal $I={\rm Id}([H,A])$ satisfies $CU_{I,A}=\overline{(H,U_A)}\subseteq H$. 
By Lemmas~\ref{lem:U_IU_I=U_AU_A} and ~\ref{lem:CU_I=CU_I'}, we have $CU_{I,A}=CU_I=CU_{I'}$, and $I'$ is a perfect ideal by Proposition \ref{prop: I' is perfect}. Moreover, the equality $\overline{(H,U_A)}=CU_I$, together with Lemmas~\ref{lem:N_Iequivdef} and~\ref{cor: N_I=N_I'} implies that $H\subseteq N_I=N_{I'}$.  

Conversely, suppose that $H$ is a closed subgroup of $U_A$ such that for some perfect ideal $I$ of $A$, we have $CU_I\leq H\leq N_I$. We claim that $H$ is normal in $U_A$. Pick $h\in H$ and $u\in U_A$. Since $H\subseteq N_I$ we have $uhu^*h^*\in CU_I$, thus $uhu^*\in CU_Ih\subseteq H$.

It remains to show that the union from the statement is taken of disjoint sets. Suppose that $H\subseteq U_A$ is a closed normal subgroup and there are two perfect ideals $I$ and $J$ such that $CU_I\leq H\leq N_I$ and $CU_J\leq H\leq N_J$. Then since $(N_I,N_I)\subseteq CU_I=\overline{(CU_I,CU_I)}$ by the definition of $N_I$ and since $CU_I$ is topologically perfect, we have  $\overline{(N_I,N_I)}=CU_I=\overline{(CU_I,CU_I)}$ and $\overline{(N_J,N_J)}=CU_J=\overline{(CU_J,CU_J)}$. Therefore, $\overline{(H,H)}$ must be equal to both $CU_I$ and $CU_J$, showing that $CU_I=CU_J$, which implies, by Lemma~\ref{lem:CU_Iinjective}, that $I=J$.
\end{proof}

\begin{remark}
As pointed out by the referee, the idea of placing every closed normal subgroup $H\leq U_A$ in between the closed normal subgroups $CU_I\leq H\leq N_I$ resembles the notion of \emph{embracing} a Lie ideal $L\subseteq A$ by a closed two-sided ideal $I\subseteq A$ from \cite{BKS08} (see \cite[Definition 1.5]{BKS08}. One could say that Theorem~\ref{thm:setofnormalsubgrps} proves that every closed normal subgroup of $U_A$ is embraced by a perfect ideal of $A$. An analogous result for $V_A$ is proved in Theorem~\ref{thm:normalsubgrpsofV_A}.
\end{remark}

Robert proves in \cite[Corollary 3.8]{Rob19} that when a unital $C^*$-algebra is simple, then $CU_A/\Ze(CU_A)$ is topologically simple. The converse is not true though. Theorem~\ref{thm:setofnormalsubgrps} however allows us to completely characterize when $CU_A/\Ze(CU_A)$ 
is topologically simple. We provide this characterization in Theorem~\ref{thm:simplicity} in the next section jointly with the characterization when $V_A$ is topologically simple.

\section{Normal subgroup structure of $V_A$}\label{sect5}
This section is parallel to the previous one. Here we prove the results concerning the normal subgroup structure of $V_A$, having then proved the second half of Theorem A. We also provide a characterization of those unital $C^*$-algebras satisfying that $CU_A/\Ze(CU_A)$ and $V_A$ are topologically simple.

We start with the following lemma that appears in \cite[Lemma 2.1]{AD23}.
\begin{lemma}\label{lem:approxInnquotient}
Let $A$ be a unital $C^*$-algebra. For every $\phi\in V_A$ and every closed two-sided ideal $I$ we have $\phi[I]\subseteq I$, thus $\phi$ naturally induces $\pi_I\circ \phi=:P_I(\phi)\in V_{A/I}$, where $\pi_I:A\rightarrow A/I$ is the quotient map. The map $P_I: V_A\rightarrow V_{A/I}$ is a continuous homomorphism satisfying $P_I(\mathrm{Ad}(u))=\mathrm{Ad}(\pi_{I}(u))$.
\end{lemma}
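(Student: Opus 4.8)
The plan is to reduce the whole statement to the single elementary fact that an inner automorphism fixes every closed two-sided ideal setwise, and then to push this fact through the pointwise-norm limit that defines $V_A$.

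First I would note that, since every $*$-automorphism of a $C^*$-algebra is isometric, the pointwise-norm topology makes $\Aut(A)$ a topological group (joint continuity of composition and continuity of inversion both follow from isometry by a two-line estimate). Hence $V_A$, being the closure of the subgroup $\{\mathrm{Ad}(u)\mid u\in U_A\}$, is itself a subgroup, and in particular $\phi^{-1}\in V_A$ whenever $\phi\in V_A$. To prove $\phi[I]\subseteq I$, I would fix a net $(u_\alpha)$ in $U_A$ with $\mathrm{Ad}(u_\alpha)\to\phi$ pointwise; for $a\in I$ we have $u_\alpha a u_\alpha^*\in I$ because $I$ is two-sided, and norm-closedness of $I$ forces $\phi(a)=\lim_\alpha u_\alpha a u_\alpha^*\in I$. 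Applying the same argument to $\phi^{-1}$ and combining gives the equality $\phi[I]=I$.

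With $\phi[I]=I$ in hand, the universal property of the quotient produces a $*$-automorphism $P_I(\phi)$ of $A/I$ uniquely determined by $P_I(\phi)\circ\pi_I=\pi_I\circ\phi$: well-definedness and injectivity use $\phi[I]\subseteq I$ and $\phi^{-1}[I]\subseteq I$ respectively, while surjectivity follows from that of $\pi_I\circ\phi$. For $\phi=\mathrm{Ad}(u)$ a direct computation gives $P_I(\mathrm{Ad}(u))(\pi_I(a))=\pi_I(uau^*)=\pi_I(u)\pi_I(a)\pi_I(u)^*$, i.e. $P_I(\mathrm{Ad}(u))=\mathrm{Ad}(\pi_I(u))$, which is the last assertion; moreover $\pi_I$ restricts to a norm-continuous group homomorphism $U(A)\to U(A/I)$ carrying the connected set $U_A$ into the connected component $U_{A/I}$, so $\mathrm{Ad}(\pi_I(u))\in V_{A/I}$. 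That $P_I$ is a homomorphism is read off the relation $P_I(\phi)\circ P_I(\psi)=P_I(\phi\circ\psi)$, and continuity is immediate: if $\phi_\beta\to\phi$ pointwise then $P_I(\phi_\beta)(\pi_I(a))=\pi_I(\phi_\beta(a))\to\pi_I(\phi(a))=P_I(\phi)(\pi_I(a))$ for every $a$, and surjectivity of $\pi_I$ turns this into pointwise convergence in $\Aut(A/I)$. Finally, applying this continuity to the net $\mathrm{Ad}(u_\alpha)\to\phi$ gives $\mathrm{Ad}(\pi_I(u_\alpha))=P_I(\mathrm{Ad}(u_\alpha))\to P_I(\phi)$; since each term lies in $V_{A/I}$ and $V_{A/I}$ is closed, $P_I(\phi)\in V_{A/I}$.

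The only point requiring care — the main, and rather mild, obstacle — is ensuring that $P_I(\phi)$ is a genuine automorphism of $A/I$ rather than merely a $*$-endomorphism, since a pointwise limit of automorphisms need not be surjective in general. This is exactly why I upgrade the containment $\phi[I]\subseteq I$ asserted in the statement to the equality $\phi[I]=I$ using $\phi^{-1}\in V_A$; once that is settled, everything else is routine verification with the quotient map and the isometry of $*$-automorphisms.
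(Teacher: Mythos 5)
Your proposal is correct. Note, however, that the paper itself does not prove this lemma at all: it is imported wholesale by citation to \cite[Lemma 2.1]{AD23}, so there is no in-paper argument to compare against. Your write-up is the natural self-contained proof, and it handles the one genuinely delicate point correctly: a pointwise limit of inner automorphisms gives a priori only $\phi[I]\subseteq I$, which by itself would yield a surjective but possibly non-injective $*$-endomorphism of $A/I$; upgrading to $\phi[I]=I$ via $\phi^{-1}\in V_A$ (which requires knowing that $\Aut(A)$ with the pointwise-norm topology is a topological group, so that the closure $V_A$ is again a group) is exactly what makes $P_I(\phi)$ an automorphism. The remaining steps --- the computation $P_I(\mathrm{Ad}(u))=\mathrm{Ad}(\pi_I(u))$, the fact that $\pi_I$ carries $U_A$ into $U_{A/I}$ by connectedness, continuity of $P_I$, and closedness of $V_{A/I}$ to conclude $P_I(\phi)\in V_{A/I}$ --- are all carried out correctly.
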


Notice that $V_I\subseteq \ker(P_I)$. The converse is in general not clear.

The following theorem, concerning the normal subgroup structure of $V_A$, is analogous to Theorem~\ref{thm:setofnormalsubgrps} that concerns the normal subgroup structure of $U_A$.

\begin{theorem}\label{thm:normalsubgrpsofV_A}
Let $A$ be a unital $C^*$-algebra. The set of all closed normal subgroups of $V_A$ is equal to the disjoint union \[\bigsqcup_{I\subseteq A{{\rm \,\,perfect\, ideal}}} \{H\le V_A\mid V_I\leq H\leq \ker (P_I)\}.\]
\end{theorem}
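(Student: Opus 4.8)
The plan is to reproduce the proof of Theorem~\ref{thm:setofnormalsubgrps} verbatim in structure, with the pair $(V_I,\ker(P_I))$ taking over the role played by $(CU_I,N_I)$ for $U_A$. The formal skeleton rests on two analogues of the ingredients used there. The first is a commutator theorem in the spirit of \cite[Theorem 3.3]{Rob19}: for every closed normal subgroup $H\le V_A$ one should have $\overline{(H,V_A)}=V_{I}$ for a canonical closed two-sided ideal $I$, together with the perfectization identity $V_I=V_{I'}$ for $I'=\mathrm{Id}([I,I])$ (the $V_A$-analogue of Lemma~\ref{lem:CU_I=CU_I'}), so that $I'$ may be taken perfect by Proposition~\ref{prop: I' is perfect}. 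The second, which I single out as the heart of the matter, is the commutator description of the upper envelope, namely
\[\ker(P_I)=\{\phi\in V_A\mid (\phi,V_A)\subseteq V_I\},\]
the exact analogue of Lemma~\ref{lem:N_Iequivdef}; it immediately yields the analogue of Corollary~\ref{cor: N_I=N_I'}, i.e. $\ker(P_I)=\ker(P_{I'})$. Granting these, plus topological perfectness of $V_I$ for perfect $I$ and the injectivity $V_I=V_J\Rightarrow I=J$ for perfect ideals (the analogue of Lemma~\ref{lem:CU_Iinjective}, provided by Corollary~\ref{cor:perfectsubgrpsofV_A}), the theorem follows by pure group theory exactly as before.

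I would then assemble the three inclusions. \emph{Forward:} given a closed normal $H\le V_A$, pick $I$ from the commutator theorem and replace it by its perfectization $I'$; since $H$ is closed and normal, $\overline{(H,V_A)}\subseteq H$, so the equality $\overline{(H,V_A)}=V_{I'}$ gives the lower bound $V_{I'}\le H$, while the same equality shows $(\phi,V_A)\subseteq V_{I'}$ for every $\phi\in H$, whence $H\le\ker(P_{I'})$ by the commutator description. \emph{Normality:} conversely, if $V_I\le H\le\ker(P_I)$ for a perfect ideal $I$, then for $\phi\in H\le\ker(P_I)$ and $\psi\in V_A$ the description gives $(\psi,\phi)=(\phi,\psi)^{-1}\in V_I\le H$, so $\psi\phi\psi^{-1}=(\psi,\phi)\phi\in H$, proving $H\trianglelefteq V_A$. \emph{Disjointness:} if $V_I\le H\le\ker(P_I)$ and $V_J\le H\le\ker(P_J)$ with $I,J$ perfect, the commutator description yields $(\ker(P_I),\ker(P_I))\subseteq V_I$, hence $\overline{(H,H)}\subseteq V_I$; topological perfectness of $V_I$ together with $V_I\le H$ gives $V_I=\overline{(V_I,V_I)}\subseteq\overline{(H,H)}$, so $\overline{(H,H)}=V_I$ and likewise $=V_J$. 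Thus $V_I=V_J$ and injectivity forces $I=J$, exactly mirroring the final paragraph of the proof of Theorem~\ref{thm:setofnormalsubgrps}.

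The main obstacle is the hard inclusion of the commutator description, $(\phi,V_A)\subseteq V_I$ for $\phi\in\ker(P_I)$ (the other inclusion landing in $V_I\subseteq\ker(P_I)$ and then collapsing the resulting central contribution in $V_{A/I}$ is the subtle converse that must be handled separately, presumably via Lemma~\ref{lem:approxInnquotient}). For the hard inclusion I would proceed as follows. Since automorphisms are isometric, conjugation is jointly point-norm continuous and $V_I$ is closed, so it suffices to take $\psi=\mathrm{Ad}(v)$ with $v\in U_A$. Writing $\phi=\lim_\delta\mathrm{Ad}(w_\delta)$ with $w_\delta\in U_A$, the hypothesis $P_I(\phi)=\mathrm{id}$ gives $\|\pi_I(w_\delta)\pi_I(v)\pi_I(w_\delta)^*-\pi_I(v)\|\to 0$, hence $\|\pi_I((w_\delta,v))-1\|\to 0$ in norm. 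Thus the commutators $c_\delta:=(w_\delta,v)\in U_A$ lie arbitrarily close to $1+I$; using that $\pi_I(c_\delta)$ is a unitary near $1$ in $A/I$, I would lift a small self-adjoint logarithm $k_\delta\in A_{\mathrm{sa}}$ of $\pi_I(c_\delta)$ and set $d_\delta:=c_\delta\exp(-\mathrm{i}k_\delta)\in U(I)$, so that $\|c_\delta-d_\delta\|\to 0$ and $\mathrm{Ad}(d_\delta)\to(\phi,\mathrm{Ad}(v))$, while $\mathrm{Ad}(d_\delta)\in V_I$; closedness of $V_I$ then gives the claim.

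I expect the genuinely delicate points to be two. First, the approximating unitaries $w_\delta$ for $\phi$ are not a priori in $N_I$, so the correction through the quotient $\pi_I$ is essential, and controlling the commutators uniformly is where quasi-central approximate units (as deployed in Lemma~\ref{lem: liecom} and Proposition~\ref{prop: idealcommutator}) enter. Second, the corrected unitaries $d_\delta$ must be placed in the identity component $U_I$, not merely in $U(I)$, in order to conclude $\mathrm{Ad}(d_\delta)\in V_I=\overline{\{\mathrm{Ad}(u)\mid u\in U_I\}}$; here I would use that $d_\delta=c_\delta\exp(-\mathrm{i}k_\delta)$ is a product of an element connected to $1$ in $U_A$ with $\exp(-\mathrm{i}k_\delta)$, and that $\exp$ is a local homeomorphism onto a neighbourhood of $1$ in $U_I$ to realize the relevant factor inside $U_I$. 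Once the commutator description is secured, the remaining assembly is routine and parallels Theorem~\ref{thm:setofnormalsubgrps} line by line.
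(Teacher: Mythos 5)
Your assembly of the theorem (forward direction via a commutator theorem plus perfectization, the normality check, and the disjointness argument) has exactly the paper's skeleton, and those parts are correct; one small repair is that the injectivity $V_I=V_J\Rightarrow I=J$ for perfect ideals must be quoted from Lemma~\ref{lem:V_Iinjective}, not from Corollary~\ref{cor:perfectsubgrpsofV_A}, since that corollary is itself deduced from the theorem you are proving. The genuine gap is in your proof of the hard inclusion $\ker(P_I)\subseteq NV_I:=\{\phi\in V_A\mid(\phi,V_A)\subseteq V_I\}$, and it sits exactly where you flag it: your corrected unitaries $d_\delta=c_\delta\exp(-\ri k_\delta)$ lie in $U(I)\cap U_A$, but to conclude $\mathrm{Ad}(d_\delta)\in V_I=\overline{\{\mathrm{Ad}(u)\mid u\in U_I\}}$ you need them in the identity component $U_I$ of $U(I)$. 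Your proposed fix (factor $d_\delta$ through an element of $U_A$ times $\exp(-\ri k_\delta)$ and use that $\exp$ is a local homeomorphism near $1$ in $U_I$) does not work: $d_\delta$ is a small perturbation of the commutator $c_\delta$, which is nowhere near $1$, so no local argument applies, and membership in $U_A\cap U(I)$ does not imply membership in $U_I$. For a concrete obstruction, take $A=C(S^1\times[0,1])$ and $I$ the ideal of functions vanishing on the two boundary circles; the unitary $f(z,t)=e^{2\pi\ri t}$ lies in $U_A$ (via the path $s\mapsto e^{2\pi\ri st}$) and in $U(I)$, but not in $U_I$, because for $g\in U(I)$ the loop $t\mapsto g(z_0,t)$ is based at $1$ and its winding number is locally constant on $U(I)$, equal to $1$ at $f$. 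Whether $\mathrm{Ad}\big(U(I)\cap U_A\big)\subseteq V_I$ holds in general is precisely the kind of question the paper flags as unclear (cf.\ the remark after Lemma~\ref{lem:approxInnquotient} that the inclusion $V_I\subseteq\ker(P_I)$ could a priori be strict), so this is not a routine repair.

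The paper avoids all lifting and component analysis by a self-application trick: $\ker(P_I)$ is itself a closed normal subgroup of $V_A$, so Robert's commutator theorem \cite[Theorem 3.10]{Rob19} (the same ingredient you already invoke for the forward direction) applies to it and gives $\overline{(\ker(P_I),V_A)}=V_J$ with $J=\mathrm{Id}(\{\phi(x)-x\mid \phi\in\ker(P_I),\,x\in A\})$; and $J\subseteq I$ is immediate, since $P_I(\phi)=\mathrm{id}$ forces $\phi(x)-x\in I$ for every $x$. Hence $(\ker(P_I),V_A)\subseteq V_J\subseteq V_I$, which is your hard inclusion. For the converse inclusion $NV_I\subseteq\ker(P_I)$, which you only gesture at, the missing ingredient is Lemma~\ref{lem: centerlessV_A}: for $\phi\in NV_I$, applying the homomorphism $P_I$ to $(\phi,\mathrm{Ad}(u))\in V_I\subseteq\ker(P_I)$ shows that $P_I(\phi)$ commutes with every $\mathrm{Ad}(\pi_I(u))$, $u\in U_A$, a dense subset of $V_{A/I}$, so $P_I(\phi)\in\Ze(V_{A/I})=\{\mathrm{id}\}$. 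I recommend replacing your approximation argument by this reduction; with that substitution your outline becomes the paper's proof.
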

We need a few lemmas for the proof. The first one is an immediate corollary to \cite[Corollary~3.8 and Lemma~3.9]{Rob19} (it is mentioned in \cite[Theorem 3.10]{Rob19}). 
\begin{lemma}[\cite{Rob19}]\label{lem V_I is perfect}
Let $A$ be a unital $C^*$-algebra $I$ be a closed two-sided ideal. Then $V_I$ is topologically perfect and the equality $V_I=\overline{{\rm Ad}(CU_I)}$ holds. 
\end{lemma}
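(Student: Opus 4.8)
The plan is to reduce both assertions to the single inclusion $V_I\subseteq\overline{\mathrm{Ad}(CU_I)}$ and then to obtain that inclusion from Robert's results, applied in the relative setting. The inclusion $\overline{\mathrm{Ad}(CU_I)}\subseteq V_I$ is immediate from $CU_I\subseteq U_I$. To link the two assertions I would first record the commutator identity
\[\overline{(V_I,V_I)}=\overline{\mathrm{Ad}(CU_I)}.\]
This is purely formal: $\mathrm{Ad}\colon U_A\to V_A$ is a continuous homomorphism with $\mathrm{Ad}(U_I)$ dense in $V_I$, so joint continuity of the group operations in $\mathrm{Aut}(A)$ gives $(V_I,V_I)\subseteq\overline{\mathrm{Ad}((U_I,U_I))}$, while $\mathrm{Ad}((U_I,U_I))=(\mathrm{Ad}(U_I),\mathrm{Ad}(U_I))\subseteq(V_I,V_I)$ is trivial; finally $CU_I=\overline{(U_I,U_I)}$ together with continuity of $\mathrm{Ad}$ turns $\overline{\mathrm{Ad}((U_I,U_I))}$ into $\overline{\mathrm{Ad}(CU_I)}$. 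Since $\overline{\mathrm{Ad}(CU_I)}\subseteq V_I$ and $\overline{(V_I,V_I)}\subseteq V_I$ always hold, both claims of the lemma are then equivalent to $V_I\subseteq\overline{\mathrm{Ad}(CU_I)}$.

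Because $\mathrm{Ad}$ is a homomorphism, $\overline{\mathrm{Ad}(CU_I)}$ is a closed subgroup, and the exponentials $e^{\ri a}$ with $a\in I_{\rm sa}$ generate $U_I$, it suffices to prove $\mathrm{Ad}(e^{\ri a})\in\overline{\mathrm{Ad}(CU_I)}$ for every $a\in I_{\rm sa}$. The clean case, which also explains the mechanism, is when $a$ admits a decomposition $a=c+z$ with $c\in\overline{[I,I]\cap A_{\rm sa}}$ and $z\in(\mathcal{Z}(A)\cap I)_{\rm sa}$: then $e^{\ri a}=e^{\ri c}e^{\ri z}$ with $e^{\ri z}$ a central unitary, so $\mathrm{Ad}(e^{\ri a})=\mathrm{Ad}(e^{\ri c})$, and $e^{\ri c}\in CU_I$ by Lemma~\ref{lem: expi[I,I]} (approximating $c$ by elements $c_n\in[I,I]\cap A_{\rm sa}$ and passing to the limit $e^{\ri c_n}\to e^{\ri c}$). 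Hence $\mathrm{Ad}(e^{\ri a})\in\mathrm{Ad}(CU_I)$ whenever such a splitting is available.

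The main obstacle is that such a decomposition need not exist: by the Cuntz--Pedersen description \cite{CP79}, $\overline{[A,A]\cap A_{\rm sa}}$ is the common kernel of all tracial states, so surjectivity of $(\mathcal{Z}(A)\cap I)_{\rm sa}\to I_{\rm sa}/\overline{[I,I]\cap A_{\rm sa}}$ already fails for $I=A$ when $A$ is simple with more than one tracial state—there the center contributes a single dimension to a higher-dimensional abelianization. One therefore cannot argue at the level of the Lie algebra of inner derivations; as the Remark following Proposition~\ref{prop: I' is perfect} illustrates, topological perfectness at the group level genuinely outstrips perfectness of the naive Lie algebra, since the closure of commutators can absorb central phases.

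The resolution is Robert's doubling argument. Combining \cite[Corollary~3.8]{Rob19} with \cite[Lemma~3.9]{Rob19} produces, for every $a\in I_{\rm sa}$, elements $w_n\in CU_I$ with $\mathrm{Ad}(w_n)\to\mathrm{Ad}(e^{\ri a})$ in $\mathrm{Aut}(A)$, yielding the missing inclusion $V_I\subseteq\overline{\mathrm{Ad}(CU_I)}$. The final step I would carry out is to verify that this argument transfers verbatim to the relative setting for an arbitrary closed two-sided ideal $I$; the only inputs needed are the identity $CU_I=CU_{I,A}$ from Lemma~\ref{lem:U_IU_I=U_AU_A} and the generation statement of Lemma~\ref{lem: expi[I,I]}, both of which are available for any $I$. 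I expect this transfer to be routine once the reduction above is in place, so that the genuine difficulty is entirely concentrated in Robert's doubling lemma.
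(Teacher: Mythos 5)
Your proposal is correct and follows essentially the same route as the paper: in both, the entire non-formal content is Robert's Lemma 3.9 combined with Lemma~\ref{lem:U_IU_I=U_AU_A} (yielding $V_I=\overline{\mathrm{Ad}(CU_{I,A})}=\overline{\mathrm{Ad}(CU_I)}$), and topological perfectness is then a formal continuity-of-commutators computation. The only cosmetic differences are that you obtain $\overline{(V_I,V_I)}=\overline{\mathrm{Ad}(CU_I)}$ directly from the definition $CU_I=\overline{(U_I,U_I)}$ where the paper instead invokes Robert's Theorem 3.6 (perfectness of $CU_I$), and your ``clean case'' / Cuntz--Pedersen digression, while harmless, is not needed.
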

\begin{proof} By \cite[Lemma~3.9]{Rob19} and Lemma \ref{lem:U_IU_I=U_AU_A}, $V_I=\overline{{\rm Ad}(CU_{I,A})}=\overline{{\rm Ad}(CU_I)}$, and by \cite[Theorem~3.6]{Rob19}, $CU_I$ is topologically perfect. Thus 
\[\overline{(V_I,V_I)}=\overline{({\rm Ad}(CU_I),{\rm Ad}(CU_I))}=\overline{{\rm Ad}((CU_I,CU_I))}=\overline{{\rm Ad}(CU_I)}=V_I.\]
\end{proof}

\begin{lemma}\label{lem: centerlessV_A}
Let $A$ be a unital $C^*$-algebra. Then $V_A$ has trivial center. 
\end{lemma}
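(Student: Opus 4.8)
The plan is to show that every $\phi\in\mathcal{Z}(V_A)$ is the identity automorphism. First I would exploit that $\mathrm{Ad}(u)\in V_A$ for every $u\in U_A$, so centrality forces $\phi\circ\mathrm{Ad}(u)=\mathrm{Ad}(u)\circ\phi$. Writing this relation out as $\phi(u)\,\phi(a)\,\phi(u)^*=u\,\phi(a)\,u^*$ and using that $\phi$ is a surjective $*$-automorphism (so $\phi(a)$ ranges over all of $A$), one obtains $\mathrm{Ad}(\phi(u))=\mathrm{Ad}(u)$; equivalently the unitary $z_u:=u^*\phi(u)$ lies in $U(\mathcal{Z}(A))$ for every $u\in U_A$. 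A short computation from $\phi(uv)=\phi(u)\phi(v)$ and the centrality of the $z$'s shows that $u\mapsto z_u$ is a norm-continuous homomorphism $z\colon U_A\to U(\mathcal{Z}(A))$ into an abelian group.

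The key step is to upgrade this multiplicative central twist into an additive one, and this is where I expect the main technical care to be needed. For fixed $b\in A_{\mathrm{sa}}$, the map $t\mapsto z_{e^{\ri tb}}$ is a continuous one-parameter subgroup of the (linear Banach--)Lie group $U(\mathcal{Z}(A))$, hence of the form $e^{\ri t g_b}$ for a unique $g_b\in\mathcal{Z}(A)_{\mathrm{sa}}$. Since $g_b$ is central it commutes with $b$, so $\phi(e^{\ri tb})=e^{\ri tb}z_{e^{\ri tb}}=e^{\ri t(b+g_b)}$; comparing this with $\phi(e^{\ri tb})=e^{\ri t\phi(b)}$ and differentiating at $t=0$ yields $\phi(b)=b+g_b$ with $g_b\in\mathcal{Z}(A)_{\mathrm{sa}}$, for every $b\in A_{\mathrm{sa}}$.

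It then remains to prove $g_b=0$. Here I would localize at irreducible representations so that ``central'' becomes ``scalar''. For an irreducible representation $\pi$ of $A$, Schur's lemma makes $\pi(g_b)=\lambda\cdot 1$ a real scalar, and by Lemma~\ref{lem:approxInnquotient} the element $\phi\in V_A$ preserves the ideal $\ker\pi$ and hence descends to an automorphism $\phi_\pi$ of $\pi(A)$ satisfying $\phi_\pi(\pi(b))=\pi(b)+\lambda\cdot 1$. Automorphisms preserve spectra, so $\mathrm{sp}(\pi(b))+\lambda=\mathrm{sp}(\pi(b))$; as $\mathrm{sp}(\pi(b))$ is a nonempty \emph{compact} subset of $\mathbb{R}$, it cannot be invariant under a nonzero translation, forcing $\lambda=0$, i.e. $\pi(g_b)=0$. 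Since irreducible representations separate the points of $A$, we conclude $g_b=0$, whence $\phi(b)=b$ for all $b\in A_{\mathrm{sa}}$ and, by $\mathbb{C}$-linearity, $\phi=\mathrm{id}_A$. Thus $\mathcal{Z}(V_A)$ is trivial.

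I anticipate two points requiring attention. The first is the passage from the multiplicative twist $z_u\in U(\mathcal{Z}(A))$ to the additive shift $g_b$, which genuinely uses the Banach--Lie structure of $U(\mathcal{Z}(A))$ (automatic smoothness of continuous one-parameter subgroups) rather than a single identity of the form $e^{\ri\phi(b)}=e^{\ri b}z_{e^{\ri b}}$, from which $\phi(b)=b+g_b$ could not be read off directly. The second is that turning ``$g_b$ central'' into ``$\lambda$ scalar'' relies on membership $\phi\in V_A$ through ideal-invariance (Lemma~\ref{lem:approxInnquotient}), which is what allows $\phi$ to descend along $\pi$; a mere automorphism commuting with all $\mathrm{Ad}(u)$ would not obviously permit this localization.
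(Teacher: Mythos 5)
Your proof is correct, and it diverges from the paper's in an interesting way. The opening step is identical: both you and the paper test centrality of $\phi$ against the inner automorphisms $\mathrm{Ad}(u)$, $u\in U_A$, and use surjectivity of $\phi$ to conclude that the twist $z_u=u^*\phi(u)$ (the paper's $\gamma(u)$) is a central unitary for every $u\in U_A$. At that point the paper stops and invokes \cite[Lemma 3.11]{Rob19}, which is exactly the statement that an element of $V_A$ with all twists central must be the identity; you instead reprove this fact from scratch. Your replacement argument is sound: the map $u\mapsto z_u$ is a continuous homomorphism into $U(\Ze(A))$ (centrality of the $z_u$'s is what makes it multiplicative), so $t\mapsto z_{e^{\ri tb}}$ is a norm-continuous one-parameter subgroup and hence has a bounded generator $g_b\in\Ze(A)_{\rm sa}$, giving $\phi(b)=b+g_b$ upon differentiation; then Schur's lemma, the descent of $\phi$ along any irreducible representation $\pi$ (justified by Lemma~\ref{lem:approxInnquotient}, which precedes this lemma in the paper, so there is no circularity), and the fact that a nonempty compact subset of $\mathbb{R}$ admits no nonzero translation symmetry force $\pi(g_b)=0$ for all irreducible $\pi$, hence $g_b=0$. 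The two caveats you flag are indeed the genuine pressure points: a single identity $e^{\ri\phi(b)}=e^{\ri(b+g_b)}$ would not yield $\phi(b)=b+g_b$ (exponentials are not injective), and ideal-invariance really does come from membership in $V_A$ rather than from commutation alone. In sum, the paper's proof is shorter because it outsources the key step to Robert; yours is longer but self-contained, and in effect supplies a Lie-theoretic proof of \cite[Lemma 3.11]{Rob19} for automorphisms in $V_A$, very much in the spirit of the rest of the paper.
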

\begin{proof}
Suppose that $\phi\in \mathcal{Z}(V_A)$. Then for each $u\in \mathcal{U}_A$ and $x\in A$, $${\mathrm{Ad}}(u^*)\circ \phi\circ {\mathrm{Ad}}(u)(x)=\phi(x).$$ The left hand side of the above equality is $u^*\phi(u)\phi(x)\phi(u^*)u=\gamma(u) \phi(x)\gamma(u)^*$, where $\gamma(u)=u^*\phi(u)$. Since $\phi$ is onto and $x$ is arbitrary, it shows that $\gamma(u)$ is in the center of $A$. By \cite[Lemma 3.11]{Rob19}, $\phi$ is trivial.
\end{proof}

The next lemma is analogous to Lemma~\ref{lem:CU_Iinjective}.
\begin{lemma}\label{lem:V_Iinjective}
Let $I$ and $J$ be perfect ideals of a unital $C^*$-algebra $A$. If $V_I=V_J$, then $I=J$.
\end{lemma}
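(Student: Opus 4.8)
The goal is to show that the assignment $I \mapsto V_I$ is injective on perfect ideals, mirroring the unitary-group statement of Lemma~\ref{lem:CU_Iinjective}. The plan is to leverage the already-established bijection for $U_A$ together with the bridge $V_I = \overline{\mathrm{Ad}(CU_I)}$ provided by Lemma~\ref{lem V_I is perfect}. Specifically, suppose $I$ and $J$ are perfect ideals with $V_I = V_J$. I would like to conclude $CU_I = CU_J$ and then invoke Lemma~\ref{lem:CU_Iinjective} to obtain $I = J$.

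First I would contemplate the quotient map $\mathrm{Ad}: CU_I \to V_I$. The obvious attempt is to argue that $\overline{\mathrm{Ad}(CU_I)} = \overline{\mathrm{Ad}(CU_J)}$ forces $CU_I = CU_J$, but this requires controlling the kernel of $\mathrm{Ad}$ restricted to these groups, i.e.\ the central unitaries. The cleaner route, which I would pursue, is a direct argument analogous to Lemma~\ref{lem:CU_Iinjective}: assume for contradiction that $I \not\subseteq J$. By perfectness of $I$ we have $[I,I] \not\subseteq J$, and since $U_I$ spans $\widetilde{I}$ (Lemma~\ref{lem: U_0(A) generates A}), the commutators $\{uxu^* - x \mid x \in I_{\mathrm{sa}}, u \in U_I\}$ span $[I,I]$, so there exist $x \in I_{\mathrm{sa}}$ and $u \in U_I$ with $uxu^* - x \in [I,I] \setminus J$ and $\|x\|$ small. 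Then $w := \exp(\ri(uxu^* - x)) \in CU_I$, so $\mathrm{Ad}(w) \in V_I = V_J$.

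The key step is then to extract a contradiction from $\mathrm{Ad}(w) \in V_J$. The natural tool is the projection homomorphism $P_J: V_A \to V_{A/J}$ from Lemma~\ref{lem:approxInnquotient}: since $V_J \subseteq \ker(P_J)$, membership $\mathrm{Ad}(w) \in V_J$ yields $P_J(\mathrm{Ad}(w)) = \mathrm{id}$, i.e.\ $\mathrm{Ad}(\pi_J(w)) = \mathrm{id}_{A/J}$, so $\pi_J(w)$ is central in $A/J$. I would compute $\pi_J(w) = \exp(\ri\, \pi_J(uxu^* - x))$, and since $uxu^* - x \notin J$ its image $\pi_J(uxu^* - x)$ is a nonzero self-adjoint element of $A/J$; the aim is to show this forces $\pi_J(w)$ to be genuinely noncentral, contradicting centrality. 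This is exactly where I expect the main obstacle to lie: a nonzero self-adjoint element need not exponentiate to a noncentral unitary (e.g.\ if its spectrum sits in $2\pi\Z$), so one cannot argue carelessly. I would circumvent this by choosing $x$ with sufficiently small norm from the outset, so that $\ri\,\pi_J(uxu^* - x)$ lies in the region where $\exp$ is injective and the central unitaries near $1$ in $A/J$ correspond precisely to central self-adjoint elements via the local logarithm; then $\pi_J(w)$ central would force $\pi_J(uxu^* - x)$ central in $A/J$, hence $uxu^* - x$ lies in the preimage of the center, which I must show still contradicts $uxu^* - x \in [I,I] \setminus J$ being a genuine commutator element.

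Let me reconsider the last point, since centrality in $A/J$ is weaker than membership in $J$. The remedy is to observe that the element $c := uxu^* - x$ is itself a commutator-type element lying in $[I,I]$, and apply a trace/character obstruction: any character of $A/J$ composed with $\pi_J$ annihilates commutators, so if $\pi_J(c)$ were central its image in each abelian quotient of $A/J$ would vanish, but this alone does not place $c$ in $J$. I would therefore prefer to sidestep centrality entirely and instead run the $U_A$-level argument: from $V_I = V_J$ and Lemma~\ref{lem V_I is perfect}, deduce $\overline{\mathrm{Ad}(CU_I)} = \overline{\mathrm{Ad}(CU_J)}$, then show the preimages $\mathrm{Ad}^{-1}(V_I) \cap CU_A$ and $\mathrm{Ad}^{-1}(V_J) \cap CU_A$ coincide, and combine with $\mathcal{Z}(CU_I) = \mathcal{Z}(CU_A) \cap CU_I$ from Proposition~\ref{prop: idealcommutator} to recover $CU_I = CU_J$, whence $I = J$ by Lemma~\ref{lem:CU_Iinjective}. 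The crux, and the step I expect to demand the most care, is this passage from equality of the closures of the $\mathrm{Ad}$-images back to equality of the groups $CU_I$ and $CU_J$ themselves, for which the centrality analysis above, done properly with the small-norm local-logarithm argument, is the essential ingredient.
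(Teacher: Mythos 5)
There is a genuine gap: your proposal never actually closes. You correctly reduce the problem to showing $CU_I=CU_J$ (after which Lemma~\ref{lem:CU_Iinjective} finishes), and you correctly identify the relevant tools, namely $P_J$ from Lemma~\ref{lem:approxInnquotient} and the inclusion $V_J\subseteq\ker(P_J)$. But the argument you run with them stalls, and you say so yourself. From $\mathrm{Ad}(w)\in V_J$ for a \emph{single} element $w=\exp(\ri(uxu^*-x))\in CU_I$ you obtain only that $\pi_J(w)$ (equivalently, after your local-logarithm reduction, $\pi_J(uxu^*-x)$) is central in $A/J$; as you yourself observe, centrality in $A/J$ is strictly weaker than membership in $J$, so this does not contradict $uxu^*-x\notin J$. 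Your fallback route --- ``show the preimages $\mathrm{Ad}^{-1}(V_I)\cap CU_A$ and $\mathrm{Ad}^{-1}(V_J)\cap CU_A$ coincide and combine with Proposition~\ref{prop: idealcommutator} to recover $CU_I=CU_J$'' --- is exactly the step you never carry out, and the ingredient you claim would carry it out is the same centrality analysis you had just found insufficient. So at its crux the proposal asserts, rather than proves, the passage from $\overline{\mathrm{Ad}(CU_I)}=\overline{\mathrm{Ad}(CU_J)}$ to $CU_I=CU_J$.

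The missing idea is to use the centrality information for the \emph{whole group} $CU_I$ at once and then exploit topological perfectness, instead of hunting for a pointwise contradiction. Concretely: every $w\in CU_I$ satisfies $\mathrm{Ad}(w)\in V_I=V_J\subseteq\ker(P_J)$, hence $\pi_J(w)\in\Ze(A/J)$, hence $\pi_J\big((w,v)\big)=1$, i.e.\ $(w,v)-1\in J$, for all $v\in U_A$; thus $CU_I\subseteq N_J$. By Lemma~\ref{lem:N_Iequivdef}, $(CU_I,CU_I)\subseteq(CU_I,U_A)\subseteq CU_J$, and since $CU_I$ is topologically perfect and $CU_J$ is closed, $CU_I=\overline{(CU_I,CU_I)}\subseteq CU_J$; by symmetry $CU_I=CU_J$, and Lemma~\ref{lem:CU_Iinjective} gives $I=J$. (No appeal to Proposition~\ref{prop: idealcommutator} is needed.) For comparison, the paper's proof avoids the reduction to $CU_I=CU_J$ altogether: it recovers the ideal directly from the group by setting $\widetilde{I}:=\mathrm{Id}(\{\phi(x)-x\mid\phi\in V_I,\ x\in A\})=\mathrm{Id}([CU_I,A])$, which manifestly depends only on $V_I$ as a subgroup of $V_A$, and then shows $\widetilde{I}=I$ by the sandwich $I=\mathrm{Id}([I,I])=\mathrm{Id}([CU_I,CU_I])\subseteq\mathrm{Id}([CU_I,A])=\widetilde{I}\subseteq I$, using Lemma~\ref{lem:idealCU_I}, perfectness of $I$, and the fact that $uxu^*-x\in I$ for $u\in U_I$, $x\in A$. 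Either repair works; what your write-up lacks is precisely this one decisive step.
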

\begin{proof}
Set \[\widetilde{I}:=\mathrm{Id}(\{\phi(x)-x\mid \phi\in V_I=V_J, x\in A\})=\mathrm{Id}(\{uxu^*-x\mid \mathrm{Ad}(u)\in V_I=V_J,x\in A\})=\mathrm{Id}([CU_I,A]),\] where the first equality between the ideals is straightforward and the second follows from the fact that $\mathrm{Ad}[CU_I]$ is dense in $V_I=V_J$ by Lemma \ref{lem V_I is perfect}. 
 Analogously, we set \[\widetilde{J}:=\mathrm{Id}(\{\phi(x)-x\mid \phi\in V_I=V_J, x\in A\})=\mathrm{Id}(\{uxu^*-x\mid \mathrm{Ad}(u)\in V_I=V_J,x\in A\})=\mathrm{Id}([CU_J,A]).\] Notice that we have $\widetilde{I}=\widetilde{J}$, so it suffices to check that $I=\widetilde{I}$ and $J=\widetilde{J}$. We show the former equality. By Lemma~\ref{lem:idealCU_I}, we have \[\widetilde{I}=\mathrm{Id}([CU_I,A])\supset \mathrm{Id}([CU_I,CU_I])=\mathrm{Id}([I,I])=I'=I,\]
since $I$ is perfect. On the other hand, for every $u\in U_I$ and $x\in A$, we have $uxu^*-x\in I$, whence $\widetilde{I}\subseteq I$. Therefore $I=\widetilde{I}$ and $I=J$ holds.
\end{proof}

\begin{proof}[Proof of Theorem~\ref{thm:normalsubgrpsofV_A}]
Let $H$ be a closed normal subgroup of $V_A$. Set \[I:=\mathrm{Id}(\{\phi(x)-x\mid \phi\in H, x\in A\}).\] 
By \cite[Theorem 3.10]{Rob19}, we have $\overline{(H,V_A)}= V_I$. Without loss of generality, we may assume that $I$ is perfect. Indeed, otherwise we replace $I$ by $I':=\mathrm{Id}([I,I])$ which is perfect by Proposition~\ref{prop: I' is perfect}. Then by Lemma~\ref{lem:CU_I=CU_I'} we have $CU_I=CU_{I'}$ and so by Lemma~\ref{lem V_I is perfect}, we get \[V_{I'}=\overline{\{\mathrm{Ad}(u)\mid u\in CU_{I'}\}}=\overline{\{\mathrm{Ad}(u)\mid u\in CU_I\}}=V_I.\] Since $H$ is normal and closed we have $\overline{(H,V_A)}\subseteq H$, thus we get $V_I\leq H\leq NV_I$, where \[NV_I:=\{\phi\in V_A\mid (\phi,V_A)\subseteq V_I\}.\] 

We now claim that $NV_I=\ker (P_I)$. First, we show that $\ker (P_I)\subseteq NV_I$. By repeating the argument from the paragraph above with the closed normal subgroup $H:=\ker (P_I)$ we get that $\ker(P_I)\subseteq NV_J$, where \[J:=\mathrm{Id}(\{\phi(x)-x\mid \phi\in \ker(P_I), x\in A\}).\] However, $J\subseteq I$, so $\ker(P_I)\subseteq NV_J\subseteq NV_I$. Indeed, for every $\phi\in\ker(P_I)$ and $x\in A$ we have (where $\pi_I:A\to A/I$ is the quotient map) \[\pi_I\big(\phi(x)-x\big)=P_I(\phi)\big(\pi_I(x)\big)-\pi_I(x)=0,\] implying that $\phi(x)-x\in I$, which in turn implies that $J\subseteq I$.

Next, we prove the other inclusion $NV_I\subseteq \ker(P_I)$. Pick $\phi\in NV_I$ and $u'\in U_{A/I}$, and let $u\in U_A$ be such that $u'=\pi_I(u)$. Since $(\phi,\mathrm{Ad}(u))\in V_I$ we have that $P_I(\phi)$ and $\mathrm{Ad}(u')$ commute. As $u'\in U_{A/I}$ was arbitrary and $\{\mathrm{Ad}(u)\mid u\in U_{A/I}\}$ is dense in $V_{A/I}$, we get $P_I(\phi)\in Z(V_{A/I})$. By Lemma~\ref{lem: centerlessV_A}, it follows that $P_I(\phi)$ is trivial, in other words, $\phi\in\ker(P_I)$.
Therefore, $\ker (P_I)=NV_I$ holds. We thus obtain $V_I\le H\le \ker (P_I)$.
\medskip

Conversely, if $I$ is a perfect ideal and $H$ is a closed subgroup of $V_A$ such that $V_I\leq H\leq \ker (P_I)$, then we claim that $H$ is normal in $V_A$. Indeed, pick any $\phi\in H$ and $\psi\in V_A$. Then since $\phi\in NV_I$, we have $\psi\phi\psi^{-1}\phi^{-1}\in V_I$, thus $\psi\phi\psi^{-1}\in V_I\phi\subseteq H$, showing that $H$ is normal in $V_A$.

By Lemma~\ref{lem:V_Iinjective}, we have that if $I$ and $J$ are perfect ideals, and $V_I=V_J$, then $I=J$.\medskip

Finally, we show that the union from the statement consists of disjoint sets. This is done similarly as in the proof of Theorem~\ref{thm:setofnormalsubgrps}. Suppose that $H\subseteq V_A$ is a closed normal subgroup and there are perfect ideals $I$ and $J$ such that $V_I\leq H\leq \ker(P_I)$ and $V_J\leq H\leq \ker (P_J)$. Since $\ker (P_I)=NV_I$ and $\ker (P_J)=NV_J$, the quotients $\ker (P_I)/V_I$ and $\ker (P_J)/V_J$ are abelian and we have that $\overline{(H,H)}$ must be equal to both $V_I$ and $V_J$. Therefore, by the argument from the previous paragraph, $I=J$ holds.
\end{proof}

\begin{corollary}\label{cor:perfectsubgrpsofV_A}
There is a one-to-one bijection between perfect ideals $I\subseteq A$ and perfect closed normal subgroups of $V_A$ of the form $I\to V_I$.
\end{corollary}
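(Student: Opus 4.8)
The plan is to read the corollary off from Theorem~\ref{thm:normalsubgrpsofV_A} and Lemma~\ref{lem:V_Iinjective}, which between them already contain all the substance; the statement is then essentially bookkeeping, exactly parallel to how Theorem~\ref{thm:perfectsubgrpsofU_A} is extracted from Theorem~\ref{thm:setofnormalsubgrps}.

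First I would verify that the map $I\mapsto V_I$ genuinely lands among topologically perfect closed normal subgroups of $V_A$. For a perfect ideal $I$, the group $V_I$ is closed by definition and topologically perfect by Lemma~\ref{lem V_I is perfect}. Its normality in $V_A$ is immediate from Theorem~\ref{thm:normalsubgrpsofV_A}: since $V_I\subseteq\ker(P_I)$, the group $V_I$ occupies the bottom slot $V_I\le V_I\le\ker(P_I)$ of the block indexed by the perfect ideal $I$, hence is a closed normal subgroup of $V_A$. Injectivity of $I\mapsto V_I$ on the set of perfect ideals is precisely the content of Lemma~\ref{lem:V_Iinjective}.

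The only remaining point is surjectivity onto the topologically perfect closed normal subgroups, and here I would argue as follows. Let $H$ be such a subgroup, so that $H=\overline{(H,H)}$. By Theorem~\ref{thm:normalsubgrpsofV_A} there is a unique perfect ideal $I$ with $V_I\le H\le\ker(P_I)$. Recall from the proof of that theorem that $\ker(P_I)=NV_I$, where $NV_I=\{\phi\in V_A\mid(\phi,V_A)\subseteq V_I\}$; consequently any two elements of $\ker(P_I)$ have commutator in $V_I$, so $\overline{(\ker(P_I),\ker(P_I))}\subseteq V_I$ and the quotient $\ker(P_I)/V_I$ is abelian. This forces $\overline{(H,H)}\subseteq V_I$. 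Conversely, $V_I\le H$ together with the topological perfectness $V_I=\overline{(V_I,V_I)}$ from Lemma~\ref{lem V_I is perfect} gives $V_I\subseteq\overline{(H,H)}$. Hence $\overline{(H,H)}=V_I$, and since $H$ is topologically perfect we conclude $H=\overline{(H,H)}=V_I$, exhibiting $H$ in the image.

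I do not expect a genuine obstacle here, since the real work has been done upstream in Theorem~\ref{thm:normalsubgrpsofV_A} (every closed normal subgroup is sandwiched between $V_I$ and $\ker(P_I)$ for a unique perfect $I$, with abelian quotient) and in Lemma~\ref{lem:V_Iinjective}. The single point demanding care is that \emph{perfect closed normal subgroup} should be read as \emph{topologically perfect}, i.e. $H=\overline{(H,H)}$, so that the abelianity of $\ker(P_I)/V_I$ can be played off against perfectness to pin down $H=V_I$; this is the exact analogue of the reduction used in the proof of Theorem~\ref{thm:perfectsubgrpsofU_A}.
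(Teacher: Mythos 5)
Your proposal is correct and follows essentially the same route as the paper: injectivity via Lemma~\ref{lem:V_Iinjective}, topological perfectness of $V_I$ via Lemma~\ref{lem V_I is perfect}, and surjectivity by sandwiching a perfect closed normal subgroup $H$ as $V_I\le H\le\ker(P_I)=NV_I$ from Theorem~\ref{thm:normalsubgrpsofV_A} and playing the abelianity of $\ker(P_I)/V_I$ against $H=\overline{(H,H)}$ to force $H=V_I$. No gaps.
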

\begin{proof}
The proof that the map $I\to V_I$ is injective, when restricted to perfect ideals, is given in the proof of Theorem \ref{thm:normalsubgrpsofV_A}. For each perfect ideal $I$ of $A$, $V_I$ is topologically perfect by Lemma \ref{lem V_I is perfect}. Now if $H\leq V_A$ is any topologically perfect closed normal subgroup, then by Theorem~\ref{thm:normalsubgrpsofV_A} there is a perfect ideal $I\subseteq A$ such that $V_I\subseteq H\subseteq \ker (P_I)=NV_I$. Since $NV_I/V_I$ is abelian, it follows that $(NV_I,NV_I)/V_I$ is trivial, so $V_I=\overline{(\ker (P_I),\ker (P_I))}=\overline{(V_I,V_I)}$, thus because $H$ is topologically perfect, we have $H=\overline{(H,H)}=V_I$.
\end{proof}

In view of Theorem \ref{thm:normalsubgrpsofV_A}, it is important to know under which conditions the two groups $V_I$ and $\ker (P_I)=NV_I$ coincide, in which case we have a more precise description of all closed normal subgroups of $V_A$.

A step in this direction is a part of the main result from \cite{AD23} (see \cite[Theorem 2.4]{AD23}) that in particular answers in positive this question for unital locally AF algebras, and the following is therefore an immediate corollary of Theorem~\ref{thm:normalsubgrpsofV_A} and \cite{AD23}. We note that in \cite{AD23}, $U_I$, for an ideal $I$ of a unital $C^*$-algebra $A$, is defined to be the connected component of the unit of unitary group of the unitization $\tilde I$ rather than $\{u\in U_A\mid u-1\in I\}$ as in this paper. The reader can readily verify that these two definitions differ only by the presence of $\T$ and in particular induce the same group $V_I$.
\begin{theorem}\label{thm:AFalgebras}
Let $A$ be a unital locally AF algebra. Then there is a one-to-one correspondence between perfect ideals of $A$ and closed normal subgroups of $V_A$, which is of the form \[I\to V_I.\] In particular, every closed normal subgroup of $V_A$ is topologically perfect.
\end{theorem}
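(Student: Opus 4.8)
The plan is to read off Theorem~\ref{thm:AFalgebras} as an immediate consequence of the general classification in Theorem~\ref{thm:normalsubgrpsofV_A}, once we feed in the single missing piece of information: that for unital locally AF algebras the two endpoints of each interval coincide, i.e. $V_I=\ker(P_I)$ for every perfect ideal $I$.

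First I would recall the precise shape of Theorem~\ref{thm:normalsubgrpsofV_A}. It asserts that every closed normal subgroup $H$ of $V_A$ lies in exactly one of the intervals $\{H\mid V_I\le H\le \ker(P_I)\}$, indexed by a uniquely determined perfect ideal $I$, the uniqueness being precisely the disjointness of the union (which in turn rests on Lemma~\ref{lem:V_Iinjective}). Thus the closed normal subgroups of $V_A$ are parametrized by pairs consisting of a perfect ideal $I$ together with a choice of intermediate subgroup between $V_I$ and $\ker(P_I)$, and the whole statement reduces to collapsing these intervals to points.

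The key input is supplied by \cite[Theorem 2.4]{AD23}: when $A$ is unital locally AF, one has $V_I=\ker(P_I)$ for every closed two-sided ideal $I$, in particular for every perfect ideal. Here I would invoke the observation made in the paragraph preceding the statement, namely that the definition of $U_I$ used in \cite{AD23} differs from the present one only by the circle $\T$ and hence induces the same group $V_I$, so that the cited result applies verbatim in our setting. With this equality in hand, each interval $\{H\mid V_I\le H\le\ker(P_I)\}$ collapses to the singleton $\{V_I\}$. Consequently the disjoint union in Theorem~\ref{thm:normalsubgrpsofV_A} becomes exactly $\{V_I\mid I\subseteq A\text{ perfect ideal}\}$, and the assignment $I\to V_I$ is a bijection onto the set of closed normal subgroups of $V_A$; its injectivity on perfect ideals is already Lemma~\ref{lem:V_Iinjective}.

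Finally, the `in particular' clause follows at once: by Lemma~\ref{lem V_I is perfect} each $V_I$ is topologically perfect, so having identified every closed normal subgroup of $V_A$ with some $V_I$, we conclude that all of them are topologically perfect. The only genuine obstacle in this argument is the equality $V_I=\ker(P_I)$ itself, whose proof is external to this paper and is carried out in \cite{AD23}; within the present framework the statement is therefore essentially a corollary, the remaining work being merely to confirm that the hypotheses of \cite[Theorem 2.4]{AD23} are met and that its notational conventions are reconciled with ours.
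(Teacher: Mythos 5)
Your proposal is correct and follows essentially the same route as the paper: the paper also derives Theorem~\ref{thm:AFalgebras} as an immediate corollary of Theorem~\ref{thm:normalsubgrpsofV_A} together with \cite[Theorem 2.4]{AD23}, which collapses each interval $V_I\le H\le\ker(P_I)$ to the single point $V_I$ for unital locally AF algebras, with injectivity from Lemma~\ref{lem:V_Iinjective} and topological perfectness from Lemma~\ref{lem V_I is perfect}. Your reconciliation of the differing definitions of $U_I$ (up to $\T$) is exactly the remark the paper makes before invoking \cite{AD23}.
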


Observe that Theorem~\ref{thm:AFalgebras} proves half of Theorem B from Introduction. We now focus on the other half.

Recall that a unital separable AF algebra $A$ which is a direct limit of a sequence $(A_n)_{n\in\Nat}$ of finite-dimensional algebras admits a convenient description using a Bratteli diagram $\mathcal{D}$. To fix a notation, vertices of $\mathcal{D}$ are denoted by pairs $(p,i)$ which correspond to $i$-th direct summand of $A_p$. We denote by $d(p,i)$ the dimension of this summand. Recall that there may be multiple arrows between vertices $(p,i)$ and $(p+1,j)$ and the number of such arrows corresponds to the multiplicity of the partial embedding between the $i$-th direct summand of $A_p$ and $j$-th direct summand of $A_{p+1}$.  Recall, e.g., from \cite[III.4]{Davidson-book}, that there is a one-to-one correspondence between ideals of $A$ and directed and hereditary subsets of $\mathcal{D}$, and that if $\mathcal{S}\subseteq \mathcal{D}$ is a directed and hereditary subset corresponding to an ideal $I\subseteq A$, then $\mathcal{D}\setminus\mathcal{S}$ is a Bratteli diagram of the quotient $A/I$.

\begin{theorem}\label{thm:Bratteli}
Let $A$ be a separable unital AF algebra with Bratteli diagram $\mathcal{D}$. There is a one-to-one correspondence between closed normal subgroups of $V_A$ and directed hereditary subsets of $\mathcal{D}$ that do not contain a maximal infinite path $(p,n_0)\rightarrow (p+1,n_1)\rightarrow (p+2,n_2)\rightarrow\ldots$ such that for all $q\geq p$ we have $d(q,n_{q-p})=1$.
\end{theorem}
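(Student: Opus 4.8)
The plan is to reduce the statement to a purely combinatorial characterization of perfect ideals and then invoke Theorem~\ref{thm:AFalgebras}. By that theorem the map $I\mapsto V_I$ is a bijection between the perfect ideals of $A$ and the closed normal subgroups of $V_A$, so it suffices to produce a bijection between the perfect ideals of $A$ and those directed hereditary subsets of $\mathcal{D}$ that contain no infinite path of vertices of dimension $1$. For this I would start from the classical correspondence $I\leftrightarrow\mathcal{S}_I$ between closed two-sided ideals of $A$ and directed hereditary subsets of $\mathcal{D}$ (see \cite[III.4]{Davidson-book}), under which $I\cap A_n=\bigoplus_{(n,i)\in\mathcal{S}_I}(\text{$i$-th summand of }A_n)$; the whole task then becomes to decide, in terms of $\mathcal{S}_I$, exactly when $I$ is perfect.

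By Lemma~\ref{lem: criterion for perfectness}, $I$ is perfect if and only if $I$ admits no character, i.e.\ no nonzero $*$-homomorphism $I\to\C$. The first observation is that $I$ is itself an AF algebra whose Bratteli diagram is precisely $\mathcal{S}_I$ with the arrows inherited from $\mathcal{D}$, since the connecting embeddings $I\cap A_n\hookrightarrow I\cap A_{n+1}$ are the restrictions of those of $A$ and the summand dimensions $d(q,i)$ are unchanged. Hence it remains to prove the following fact about AF algebras: the algebra with diagram $\mathcal{S}_I$ has a character if and only if $\mathcal{S}_I$ contains an infinite path all of whose vertices have dimension $1$.

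To establish this fact I would argue in both directions. Given a character $\chi$ on $I$, its restriction $\chi_n$ to $I\cap A_n$ is, for every $n$ large enough that $\chi_n\neq 0$, a character of a finite-dimensional algebra, hence evaluation at a one-dimensional summand $(n,v_n)\in\mathcal{S}_I$; the compatibility $\chi_{n+1}|_{I\cap A_n}=\chi_n\neq0$ forces the image of $I\cap A_n$ in the one-dimensional summand $(n+1,v_{n+1})$ to be nonzero, so there is a single arrow $(n,v_n)\to(n+1,v_{n+1})$ from a uniquely determined one-dimensional vertex $(n,v_n)$, and by heredity of $\mathcal{S}_I$ the resulting infinite path of $1$s lies in $\mathcal{S}_I$. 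Conversely, an infinite path $(p,n_0)\to(p+1,n_1)\to\cdots$ of one-dimensional vertices in $\mathcal{S}_I$ gives compatible evaluations $\chi_q$ at $(q,n_{q-p})$ for $q\geq p$, which define a nonzero character on the dense subalgebra $\bigcup_{q\geq p}(I\cap A_q)$ and extend to $I$ by continuity. Since $A$ may be taken with unital connecting maps, the relation $\sum_i m_{ij}d(q,i)=d(q+1,j)$ shows that a one-dimensional vertex has a unique one-dimensional predecessor, so the existence of an infinite path of $1$s in $\mathcal{S}_I$ is equivalent to the existence of a \emph{maximal} such path, matching the wording of the statement. Combining this with Lemma~\ref{lem: criterion for perfectness}, the ideal correspondence, and Theorem~\ref{thm:AFalgebras} yields the asserted bijection.

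The main obstacle I expect is not any single deep step but the careful bookkeeping in the middle: one must match the precise definitions of \emph{directed} and \emph{hereditary} from \cite{Davidson-book} with the statements ``$I$ is an ideal'' and ``the forward path stays inside $\mathcal{S}_I$'', and one must handle the (possibly non-unital) inclusions $I\cap A_n\hookrightarrow I\cap A_{n+1}$ both when identifying the Bratteli diagram of $I$ and when ruling out that a character could vanish on a tail $I\cap A_n$ (which is excluded precisely because $\chi\neq0$ forces $\chi_n\neq0$ for all large $n$). None of these is serious, but they are exactly where an incorrect argument would slip in.
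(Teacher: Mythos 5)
Your proof is correct, and while it opens with the same reduction as the paper (Theorem~\ref{thm:AFalgebras} together with Lemma~\ref{lem: criterion for perfectness} reduce everything to showing that an ideal $I$ with diagram $\mathcal{S}\subseteq\mathcal{D}$ admits a character if and only if $\mathcal{S}$ contains a maximal infinite path of $1$s), your execution of that combinatorial core is genuinely different. The paper never analyzes characters on $I$ directly: it converts a character into a subideal $J\subseteq I$ with $I/J\cong\C$ and works entirely inside the ideal--diagram correspondence, the main labor being the verification that for a maximal infinite path $\mathcal{S}'$ of $1$s the complement $\mathcal{S}\setminus\mathcal{S}'$ is again directed and hereditary (so that it defines a $J$ with $I/J=\C$), and, conversely, the extraction of a maximal path from the all-ones quotient diagram $\mathcal{S}\setminus\mathcal{F}$ of $I/J$. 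You instead work with characters through the finite-dimensional approximants $I\cap A_n$: a nonzero character restricts, for large $n$, to evaluation at a one-dimensional summand, and compatibility of these restrictions forces the arrows; conversely, a path of $1$s yields compatible evaluations on $\bigcup_n(I\cap A_n)$ which extend by continuity. This buys a cleaner argument --- the directed/hereditary bookkeeping for $\mathcal{S}\setminus\mathcal{S}'$ disappears entirely --- at the price of invoking the standard facts that $I=\overline{\bigcup_n(I\cap A_n)}$ and that $\mathcal{S}$ with inherited arrows is a Bratteli diagram for $I$ (both in \cite[III.4]{Davidson-book}). Three small points to tighten: (i) the compatibility $\chi_{q+1}|_{I\cap A_q}=\chi_q$ in your converse direction is not automatic but follows from the dimension count $\sum_i m_{ij}\,d(q,i)\le d(q+1,j)=1$, which makes the path arrow the \emph{only} arrow entering $(q+1,n_{q+1-p})$, so that the $(q+1)$-st evaluation of an element of $I\cap A_q$ really is its $(q,n_{q-p})$-component; (ii) in the forward direction the path lies in $\mathcal{S}$ simply because each $\chi_n$ is supported on a summand of $I\cap A_n$, not ``by heredity''; (iii) your appeal to unital connecting maps for the equivalence between existence of an infinite path of $1$s and of a \emph{maximal} one is a sufficient but unnecessary detour (and sits slightly awkwardly with $\mathcal{D}$ being prescribed in the statement): since any backward extension strictly decreases the starting level, every infinite path of $1$s in $\mathcal{S}$ admits a maximal backward extension within $\mathcal{S}$, with no hypothesis on the connecting maps.
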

\begin{proof}
We view $A$ as a direct limit of a sequence $(A_n)_{n\in\Nat}$ of finite-dimensional algebras. We retain the notation from the previous paragraph. Applying Theorem~\ref{thm:AFalgebras}, it is equivalent to prove a one-to-one correspondence between perfect ideals of $A$ and directed hereditary subsets of $\mathcal{D}$ that do not contain a maximal infinite path $(p,n_0)\rightarrow (p+1,n_1)\rightarrow (p+2,n_2)\rightarrow\ldots$ such that for all $q\geq p$ we have $d(q,n_{q-p})=1$. Clearly, the multiplicity of each such partial embedding is necessarily 1.

We claim that a directed and a hereditary subset $\mathcal{S}\subseteq \mathcal{D}$ corresponding to an ideal $I\subseteq A$ defines a perfect ideal if and only if $\mathcal{S}$ does not contain a maximal infinite path $(p,n_0)\rightarrow (p+1,n_1)\rightarrow (p+2,n_2)\rightarrow\ldots$ such that for all $q\geq p$ we have $d(q,n_{q-p})=1$. Suppose that $\mathcal{S}$ contains such a maximal path $(p,n_0)\rightarrow (p+1,n_1)\rightarrow (p+2,n_2)\rightarrow\ldots$ and denote by $\mathcal{S}'$ the vertices of this path. We claim that $\mathcal{S}\setminus\mathcal{S}'$ is directed and hereditary. To see this, notice that for every $(q,n_{q-p})\in\mathcal{S}'$ there is exactly one arrow (inside $\mathcal{D}$) from $(q,n_{q-p})$ (necessarily to $(q+1,n_{q+1-p})$) and at most one arrow to $(q,n_{q-p})$. Therefore, if $(r,n_{r-p})\in\mathcal{S}\setminus\mathcal{S}'$ and there is an arrow $(r,n_{r-p})\rightarrow (r+1,n_{r+1-p})$ within $\mathcal{S}$, then $(r+1,n_{r+1-p})\notin \mathcal{S}'$ as otherwise $d(r+1,n_{r+1-p})=1$, thus also $d(r,n_{r-p})=1$ and so $(r,n_{r-p})\in\mathcal{S}'$ because of maximality $\mathcal{S}'$, a contradiction. This verifies directedness. Now if $(r,n_{r-p})\in\mathcal{S}$ is such that for every arrow $(r,n_{r-p})\rightarrow (r+1,n_{r+1-p})$ we have $(r+1,n_{r+1-p})\notin\mathcal{S}'$, then necessarily also $(r,n_{r-p})\notin\mathcal{S}'$ since otherwise, if $(r,n_{r-p})\in\mathcal{S}'$, there is only one arrow from $(r,n_{r-p})$ and by definition it is into a vertex from $\mathcal{S}'$. This verifies the hereditary property. Therefore, $\mathcal{S}\setminus\mathcal{S}'$ defines a subideal $J\subseteq I$ and $I/J$ corresponds to $\mathcal{S'}$ which clearly defines a one-dimensional $C^*$-algebra, i.e. $\Com$. Therefore, $I$ admits a non-trivial character, so it is not perfect by Lemma~\ref{lem: criterion for perfectness}.\medskip

Conversely, suppose that $I$ is not perfect. Then by Lemma~\ref{lem: criterion for perfectness}, $I$ admits a non-trivial character, so there exists a subideal $J\subseteq I$ such that $I/J=\Com$. Let $\mathcal{F}\subseteq \mathcal{S}$ be a directed and hereditary subset that defines $J$. It follows that the quotient $\Com=I/J$ corresponds to the Bratteli subdiagram $\mathcal{S}\setminus\mathcal{F}$. However, $\mathcal{S}\setminus\mathcal{F}$ may correspond to $\Com$ only if for every $(p,n)\in\mathcal{S}\setminus\mathcal{F}$ we have $d(p,n)=1$. Since $\mathcal{S}\setminus\mathcal{F}$ is infinite, by Zorn's lemma we can extract a maximal infinite path $\mathcal{S}'\subseteq\mathcal{S}\setminus\mathcal{F}$. In particular, $\mathcal{S}$ contains a maximal infinite path $(p,n_0)\rightarrow (p+1,n_1)\rightarrow (p+2,n_2)\rightarrow\ldots$ such that for all $q\geq p$ we have $d(q,n_{q-p})=1$. This finishes the proof of the claim.
\end{proof}

\begin{remark}
We remark that such a precise description of closed normal subgroups as it is done for $V_A$ in Theorem~\ref{thm:AFalgebras} is rather impossible for $U_A$ or $CU_A$. Indeed, notice that by Theorem~\ref{thm:setofnormalsubgrps}, for every proper non-trivial ideal $I$ of a unital $C^*$-algebra $A$ we get the closed normal subgroups $CU_I\leq U_I\leq N_I$ of $U_A$ and by Lemma~\ref{lem:imageofNI}, $U_I\subsetneq N_I$ if $\Ze(A/I)$ is non-trivial. In particular, picking $B$ any unital simple AF algebra and setting $A:=B\oplus B\oplus B$ and $I:=B\oplus \{0\}\oplus\{0\}$, we get $\Ze(A/I)$ is two-dimensional, so non-trivial. Therefore, there is no one-to-one correspondence between (perfect) ideals and closed normal subgroups of $U_A$ even when $A$ is AF.
\end{remark}

We conclude this section with the following result that generalizes the main results from \cite{Rob19} that if $A$ is a simple $C^*$-algebra, then $CU_A/\Ze(CU_A)$ and $V_A$ are topologically simple, and employs Theorems~\ref{thm:setofnormalsubgrps} and~\ref{thm:normalsubgrpsofV_A}.
\begin{theorem}\label{thm:simplicity}
Let $A$ be a unital $C^*$-algebra. Then the following are equivalent:

\begin{enumerate}
    \item For every closed two-sided ideal $I$ of $A$, either $I\subseteq \Ze(A)$ or $A/I$ is abelian.
    \item The only perfect ideals of $A$ are the trivial ideal $\{0\}$ and $\mathrm{Id}([A,A])$.
    \item $V_A$ is topologically simple.
    \item $CU_A/\Ze(CU_A)$ is topologically simple.
    \item For every closed two-sided ideal $I$ of $A$, $V_I$ is either trivial or equal to $V_A$.
    \item For every closed two-sided ideal $I$ of $A$, either $CU_I$ is trivial, or $CU_I=CU_A$.
\end{enumerate}
\end{theorem}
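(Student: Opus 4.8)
The plan is to treat condition (2) as the hub: first establish the elementary equivalence (1)$\Leftrightarrow$(2), then connect (3),(5) to (2) through the classification of closed normal subgroups of $V_A$, and (4),(6) to (2) through the classification for $U_A$. Throughout I write $A':=\mathrm{Id}([A,A])$, which by Proposition~\ref{prop: I' is perfect} is the largest perfect ideal of $A$; since any perfect ideal $I$ satisfies $I=\mathrm{Id}([I,I])\subseteq\mathrm{Id}([A,A])=A'$, condition (2) is precisely the assertion that $\{0\}$ and $A'$ are the only perfect ideals. Two ``endpoints'' recur: $CU_{\{0\}}=\{1\}$ and $CU_{A'}=CU_A$ (Lemma~\ref{lem:CU_I=CU_I'}), and likewise $V_{\{0\}}=\{1\}$ and $V_{A'}=V_A$ (using $CU_A=CU_{A'}$ together with Lemma~\ref{lem V_I is perfect}). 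The overall strategy yields the chain (1)$\Leftrightarrow$(2)$\Leftrightarrow$(5)$\Leftrightarrow$(3) and (2)$\Leftrightarrow$(6)$\Leftrightarrow$(4).

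For (1)$\Leftrightarrow$(2) I would argue through $I':=\mathrm{Id}([I,I])$, the largest perfect ideal inside a given closed two-sided ideal $I$. Assuming (2), $I'\in\{0,A'\}$: if $I'=\{0\}$ then $[I,I]=0$, so $I$ is commutative, and a commutative closed two-sided ideal of a $C^*$-algebra is automatically contained in the center (a standard functional-calculus fact: $x^2$, and hence $x$, is central for self-adjoint $x\in I$), giving $I\subseteq\Ze(A)$; if $I'=A'$ then $A'\subseteq I$, so $[A,A]\subseteq I$ and $A/I$ is abelian. Conversely, assuming (1), let $I$ be perfect and nonzero; then $I\not\subseteq\Ze(A)$ (otherwise $I$ is commutative, forcing $I=\mathrm{Id}([I,I])=\{0\}$), so $A/I$ is abelian, i.e. $[A,A]\subseteq I$, hence $A'\subseteq I$; combined with $I\subseteq A'$ this gives $I=A'$.

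Next I would link the group-theoretic conditions to (2) via injectivity and the sandwich theorems. By Lemma~\ref{lem:V_Iinjective} the map $I\mapsto V_I$ is injective on perfect ideals, and $V_I=V_{I'}$ for every $I$; hence condition (5), that $V_I\in\{\{1\},V_A\}$ for all $I$, is equivalent to (2). The same reasoning with Lemma~\ref{lem:CU_Iinjective} and $CU_I=CU_{I'}$ gives (6)$\Leftrightarrow$(2). For (5)$\Leftrightarrow$(3): if $V_A$ is topologically simple then each $V_I$, being a topologically perfect closed normal subgroup (Lemma~\ref{lem V_I is perfect}), is $\{1\}$ or $V_A$, giving (5); conversely, given (5), any closed normal $H\trianglelefteq V_A$ satisfies $V_I\le H\le\ker(P_I)$ for a perfect ideal $I$ by Theorem~\ref{thm:normalsubgrpsofV_A}, and (5) forces $V_I\in\{\{1\},V_A\}$ — if $V_I=V_A$ then $H=V_A$, while if $V_I=\{1\}$ then $I=\{0\}$ by injectivity, whence $P_{\{0\}}=\mathrm{id}$ and $\ker(P_{\{0\}})=\{1\}$, so $H=\{1\}$.

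The main obstacle is (4)$\Leftrightarrow$(6), where the center $\Ze(CU_A)$ must be handled, since closed normal subgroups of $CU_A/\Ze(CU_A)$ correspond to closed normal subgroups $N$ of $CU_A$ with $\Ze(CU_A)\subseteq N$. For (6)$\Rightarrow$(4): given such $N$, from $N\trianglelefteq CU_A\trianglelefteq U_A$ and \cite[Theorem~3.7]{Rob19} we get $N\trianglelefteq U_A$, so Theorem~\ref{thm:setofnormalsubgrps} yields a perfect ideal $I$ with $CU_I\le N\le N_I$; condition (6) gives $CU_I\in\{\{1\},CU_A\}$, and if $CU_I=CU_A$ then $N=CU_A$, whereas if $CU_I=\{1\}$ then $I=\{0\}$, so (Lemma~\ref{lem:N_Iequivdef} together with the fact that $U_A$ spans $A$, Lemma~\ref{lem: U_0(A) generates A}) $N\le N_{\{0\}}=U_A\cap\Ze(A)$, forcing $N\subseteq CU_A\cap\Ze(A)\subseteq\Ze(CU_A)\subseteq N$, i.e. $N=\Ze(CU_A)$. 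For (4)$\Rightarrow$(6): for a perfect ideal $I$ consider the closed normal subgroup $M:=\overline{CU_I\,\Ze(CU_A)}\trianglelefteq CU_A$, which contains $\Ze(CU_A)$; by (4) either $M=\Ze(CU_A)$ or $M=CU_A$. In the first case $CU_I$ is abelian, hence trivial since it is topologically perfect \cite[Theorem~3.6]{Rob19}; in the second case $CU_I\,\Ze(CU_A)$ is dense in $CU_A$, and because the central factor drops out of all commutators one gets $\overline{(CU_I,CU_I)}=\overline{(CU_A,CU_A)}=CU_A$, so topological perfectness of $CU_I$ gives $CU_I=CU_A$. The delicate points I expect to spend most care on are verifying $N_{\{0\}}=U_A\cap\Ze(A)$ and that passing to commutators upgrades the density of $CU_I\,\Ze(CU_A)$ to the equality $CU_I=CU_A$; the remaining center bookkeeping is then routine.
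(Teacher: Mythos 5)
Your proposal is correct and follows essentially the same route as the paper: condition (2) serves as the hub, the sandwich theorems (Theorems~\ref{thm:setofnormalsubgrps} and~\ref{thm:normalsubgrpsofV_A}) together with the injectivity lemmas (Lemmas~\ref{lem:CU_Iinjective} and~\ref{lem:V_Iinjective}), the identifications $CU_I=CU_{I'}$, $V_I=V_{I'}$, and the endpoints $CU_{\{0\}}=\{1\}$, $CU_{\mathrm{Id}([A,A])}=CU_A$ handle (3)--(6), and your key step for (4) is exactly the paper's subgroup $\overline{CU_I\cdot\Ze(CU_A)}$ with the observation that central factors drop out of commutators, plus topological perfectness of $CU_I$. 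The only deviations are harmless: you route (3) through (5) and (4) through (6) rather than proving (2)$\Leftrightarrow$(3) and (2)$\Leftrightarrow$(4) directly, and in (2)$\Rightarrow$(1) you replace the paper's appeal to Lemma~\ref{lem: liecom} by the elementary fact that a commutative closed two-sided ideal is central.
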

\begin{proof}
$(1)\implies (2)$: let $I\subseteq A$ be a perfect ideal. If $I\subseteq \Ze(A)$, then $[I,I]=\{0\}$ and so since $I=\mathrm{Id}([I,I])$ we get that $I=\{0\}$. If $A/I$ is abelian, then necessarily $[A,A]\subseteq I$. It follows from \[I\subseteq\mathrm{Id}([I,I])\subseteq \mathrm{Id}([A,A])\subseteq I\] that $I=\mathrm{Id}([A,A])$. Therefore, (2) holds.\medskip

$(2)\implies (1)$: let $I\subseteq A$ be a closed two-sided ideal and suppose that 
$I\not\subseteq \Ze(A)$. Since by Lemma~\ref{lem: liecom}, $\overline{[I,I]}=\overline{[A,I]}$ and the latter is nonzero as $I\nsubseteq \Ze(A)$, we get that the perfect ideal $I'=\mathrm{Id}([I,I])$ is nonzero. By the assumption we must have that $I'=\mathrm{Id}([A,A])$ but then $[A,A]\subseteq I$, so $A/I$ is abelian.  Therefore (1) holds.\medskip

$(2)\implies (3)$: since $V_{\{0\}}=\{\mathrm{Id}\}$ and $\ker(P_{\{0\}})=\{\mathrm{Id}$\}, and also $V_{\mathrm{Id}([A,A])}=V_A$ and $\ker(P_{\mathrm{Id}([A,A])})=V_A$, it immediately follows from Theorem~\ref{thm:normalsubgrpsofV_A} that the only closed normal subgroups of $V_A$ are the trivial subgroup and $V_A$ itself, thus $V_A$ is topologically simple.\medskip

$(3)\implies (2)$: Let $I\subseteq A$ be a perfect ideal. Then $V_I\leq V_A$ is a perfect closed normal subgroup and thus $V_I$ is either trivial or equal to $V_A$ which by Corollary~\ref{cor:perfectsubgrpsofV_A} gives that $I$ is either trivial or equal to $\mathrm{Id}([A,A])$.\medskip

$(2)\implies (4)$: let $H'\leq CU_A/\Ze(CU_A)$ be a closed normal subgroup. By the fourth isomorphism theorem, there is a closed normal subgroup $\Ze(CU_A)\leq H\leq CU_A$ such that $H'=H/\Ze(CU_A)$. By \cite[Theorem 3.7]{Rob19}, $H$ is normal in $U_A$, so by Theorem~\ref{thm:setofnormalsubgrps}, there is a perfect ideal $I\subseteq A$ such that $CU_I\leq H\leq N_I$. If $I=\{0\}$ then $N_I=\Ze(U_A)$ and so $H\subseteq CU_A\cap \Ze(U_A)\subseteq \Ze(CU_A)$, thus $H=\Ze(CU_A)$. If $I=\mathrm{Id}([A,A])$, then by Lemma~\ref{lem:CU_I=CU_I'}, $CU_I=CU_A$ and since $CU_I\leq H\leq CU_A$ we get that $H=CU_A$. Therefore, $CU_A/\mathcal{Z}(CU_A)$ is topologically simple.\medskip

$(4)\implies (2)$: we prove the contraposition. 
Suppose there is a perfect ideal $I\subseteq A$ that is neither equal to $\{0\}$ nor equal to $\mathrm{Id}([A,A])$. Then by Lemma~\ref{lem:CU_Iinjective}, $CU_I$ is neither equal to $CU_{\{0\}}=\{1\}$ nor equal to $CU_{{\rm Id}[A,A]}=CU_A$, and thus $CU_I$ is a non-trivial proper closed normal subgroup of $CU_A$. 
Let $H_I$ be the closed normal subgroup of $CU_A$ generated by $CU_I$ and $\mathcal{Z}(CU_A)$, which is the closure of $CU_I\cdot \Ze(CU_A)$. Set $H_I'=H_I/\Ze(CU_A)$.

We claim that $H'_I$ is a proper non-trivial closed normal subgroup of $CU_A/\Ze(CU_A)$. $H'_I$ must be non-trivial as otherwise, we have $CU_I\subset \mathcal{Z}(CU_A)$. Therefore, $CU_I=\overline{(CU_I,CU_I)}\subset \overline{(\mathcal{Z}(CU_A),\mathcal{Z}(CU_A))}=\{1\},$ a contradiction.
 
It remains to show that $H'_I\neq CU_A/\Ze(CU_A)$. 
Since $\overline{(H_I,H_I)}=\overline{(CU_I,CU_I)}=CU_I\neq CU_A=\overline{(CU_A,CU_A)}$, we see that $H_I\subsetneq CU_A$, and we obtain $H_I'\neq CU_A/\mathcal{Z}(CU_A)$.  
Therefore, $(4)$ does not hold.
\medskip

$(5)\implies (3)$: let $H$ be a closed normal subgroup of $V_A$. Then 
by Theorem~\ref{thm:normalsubgrpsofV_A}, there exists a perfect ideal $I\subseteq A$ such that $V_I\le H\le \ker(P_I)$. If $V_I=V_A$ then clearly $H=V_A$, and if $V_I=\{\mathrm{Id}\}$, then $I=\{0\}$, so $\ker(P_I)=\{\mathrm{Id}\}$, and thus $H=\{\mathrm{Id}\}$. Thus, $V_A$ is topologically simple.\medskip

$(3)\implies (5)$: let $I\subseteq A$ be any closed two-sided ideal. Then $V_I$ is a closed normal subgroup of $V_A$. Since $V_A$ is topologically simple, $V_I$ must be either trivial or equal to $V_A$, proving (5).\medskip

$(4)\implies (6)$: let $I\subseteq A$ be a closed two-sided ideal of $A$. 
Let $H_I$ be the closed subgroup of $CU_A$ generated by $CU_I$ and $\mathcal{Z}(CU_A)$, which is the closure of $CU_I\cdot\Ze(CU_A)$. Then $H_I'=H_I/\mathcal{Z}(CU_A)$ is a closed normal subgroup of the topologically simple group $CU_A/\mathcal{Z}(CU_A)$, which is, as showed in the proof of $(4)\Rightarrow (2)$, topologically isomorphic to $CU_I/\Ze(CU_I)$. If $H_I'=\{1\}$, then $CU_I\subseteq \mathcal{Z}(CU_I)$, so $CU_I$ is abelian and topologically perfect, whence $CU_I=\{1\}$. If $H_I'=CU_A/\mathcal{Z}(CU_A)$, then  $H_I=CU_A$, so $$CU_A=\overline{(H_I,H_I)}=\overline{(CU_I\cdot\Ze(CU_A),CU_I\cdot\Ze(CU_A))}=\overline{(CU_I,CU_I)}=CU_I,$$ so $(6)$ holds.\medskip

$(6)\implies (4)$: let $H'\leq CU_A/\Ze(CU_A)$ be a closed normal subgroup. By the third isomorphism theorem, there exists a closed normal subgroup $\Ze(CU_A)\le H\le CU_A$ such that $H'=H/\Ze(CU_A)$. By \cite[Theorem 3.7]{Rob19} $H$ is normal in $U_A$ so by Theorem~\ref{thm:setofnormalsubgrps}, there exists a perfect ideal $I\subseteq A$ such that $CU_I\leq H\leq N_I$. Now we use the assumption. If $CU_I=CU_A$, then $H=CU_A$ and $H'=CU_A/\Ze(CU_A)$, so assume that $CU_I$ is trivial $=CU_{\{0\}}$. However, then $I=\{0\}$ by the perfectness of $I$ and Corollary \ref{cor:perfectsubgrpsofV_A}, and arguing as in the implication $(2)\Rightarrow(4)$ we get that $H\subseteq \Ze(CU_A)$, thus $H=\Ze(CU_A)$ and so $H'$ is trivial.
Therefore, $CU_A/\mathcal{Z}(CU_A)$ is topologically simple.
\end{proof}

\section{Lie structure of $U_A$ and applications}\label{sect6}

In this section, we shall further investigate the Lie structure of $U_A$. The main application, proved in Corollary~\ref{cor:AdUA-inv-algebras}, is that $C^*$-subalgebras that are invariant under unitaries from the connected component of the identity are invariant under all unitaries.

The following lemma is already implicit in \cite{Rob19} and it immediately follows from \cite[Lemma 3.2]{Rob19} and Lemma~\ref{lem:U_IU_I=U_AU_A}.

\begin{lemma}\label{lem:imageofNI}
Let $A$ be a unital $C^*$-algebra and $I\subseteq A$ be a two-sided closed ideal. Let $P_I:A\to A/I$ be the canonical quotient map. Then $N_I=P_I^{-1}(\Ze(U_{A/I}))\cap U_A$.
\end{lemma}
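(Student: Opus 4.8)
The plan is to prove both inclusions of the set equality by pushing the commutator condition defining $N_I$ through the quotient map $P_I$. Throughout I would use the two available descriptions of $N_I$: the defining one $N_I=\{u\in U_A\mid (u,U_A)\subseteq\widetilde I\}$ and the refined one from Lemma~\ref{lem:N_Iequivdef}, namely $N_I=\{u\in U_A\mid (u,U_A)\subseteq CU_I\}$. A preliminary observation I would record first is that the restriction $P_I|_{U_A}$ surjects onto $U_{A/I}$: every $w\in U_{A/I}$ is a finite product of exponentials $\exp(\ri b_j)$ with $b_j\in (A/I)_{\mathrm{sa}}$, each $b_j$ lifts to some $a_j\in A_{\mathrm{sa}}$ since $A_{\mathrm{sa}}\to (A/I)_{\mathrm{sa}}$ is onto, and then $u=\prod_j\exp(\ri a_j)\in U_A$ satisfies $P_I(u)=w$.

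For the inclusion $N_I\subseteq P_I^{-1}(\Ze(U_{A/I}))\cap U_A$, I would take $u\in N_I$ and invoke Lemma~\ref{lem:N_Iequivdef}: for every $v\in U_A$ the commutator $(u,v)$ lies in $CU_I\subseteq U(I)$, so $(u,v)-1\in I$ and hence $P_I\big((u,v)\big)=1$. This says precisely that $P_I(u)$ commutes with $P_I(v)$; letting $v$ range over $U_A$ and using the surjectivity of $P_I|_{U_A}$ onto $U_{A/I}$, I conclude $P_I(u)\in \Ze(U_{A/I})$.

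For the reverse inclusion I would take $u\in U_A$ with $P_I(u)\in\Ze(U_{A/I})$ and run the same computation backwards, this time using only the defining description of $N_I$. For any $v\in U_A$ centrality gives $P_I\big((u,v)\big)=P_I(u)P_I(v)P_I(u)^*P_I(v)^*=1$, so $(u,v)\in\ker P_I\cap U(A)=U(I)$; in particular $(u,v)-1\in I$, whence $(u,v)\in 1+I\subseteq\widetilde I$. Thus $(u,U_A)\subseteq\widetilde I$ and $u\in N_I$ by definition.

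The only genuinely delicate point is the forward direction. The bare definition of $N_I$ merely yields $(u,v)\in\widetilde I=\Com 1+I$, so that $P_I\big((u,v)\big)$ is a priori only a scalar unitary $\lambda 1\in\T 1$ rather than $1$, and centrality of $P_I(u)$ would fail if $\lambda\neq 1$. This is exactly why the sharper description of $N_I$ via $CU_I$ from Lemma~\ref{lem:N_Iequivdef} is needed: it upgrades $(u,v)\in\widetilde I$ to $(u,v)\in CU_I\subseteq U(I)$, forcing $\lambda=1$. I expect everything else to be routine bookkeeping.
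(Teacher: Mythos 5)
Your proof is correct, and it takes essentially the same route as the paper: the paper derives this lemma ``immediately'' from \cite[Lemma 3.2]{Rob19} together with Lemma~\ref{lem:U_IU_I=U_AU_A}, and that combination is exactly the content of Lemma~\ref{lem:N_Iequivdef} which you invoke at the one genuinely delicate point (upgrading $(u,v)\in\widetilde I$ to $(u,v)\in CU_I\subseteq U(I)$ so that the commutator dies, rather than becomes a scalar, in the quotient). The remaining steps --- surjectivity of $P_I|_{U_A}$ onto $U_{A/I}$ via lifting self-adjoint logarithms, and the reverse inclusion from centrality --- are precisely the routine bookkeeping the paper leaves implicit.
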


Let us define $\mathfrak{n}_I$ to be the set \[\{a\in A_{{\rm sa}}\mid [a, A]\subseteq \overline{[I,I]}\}.\]

\begin{lemma}\label{lem:lieofni}
We have \[\mathfrak{n}_I=\{a\in A_{{\rm sa}}\mid \exp(\ri ta)\in N_I\;\forall t\in\Rea\};\] i.e. $\mathfrak{n}_I$ is the Lie algebra of $N_I$.
\end{lemma}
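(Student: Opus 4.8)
The plan is to prove the two stated sets are equal by establishing inclusions in both directions, using the characterization of $N_I$ from Lemma~\ref{lem:imageofNI} together with the criterion that $a$ lies in the Lie algebra of the closed subgroup $N_I$ precisely when $\exp(\ri t a)\in N_I$ for all $t\in\Rea$. Write $\mathfrak{L}(N_I)=\{a\in A_{\rm sa}\mid \exp(\ri ta)\in N_I\ \forall t\in\Rea\}$ for the right-hand set, which is indeed the Lie algebra of the closed linear Lie subgroup $N_I$ by Definition~\ref{def:Liegrp}. So the goal is to show $\mathfrak{n}_I=\mathfrak{L}(N_I)$.

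For the inclusion $\mathfrak{n}_I\subseteq\mathfrak{L}(N_I)$, I would fix $a\in\mathfrak{n}_I$, so $[a,A]\subseteq\overline{[I,I]}$; note first that this forces $a\in N_I$-type behavior at the ideal level. The key computation is to show $\exp(\ri ta)\in N_I$ for each $t$, i.e. that $(\exp(\ri ta),v)\in CU_I$ for every $v\in U_A$, using the equivalent description $N_I=\{u\in U_A\mid (u,U_A)\subseteq CU_I\}$ from Lemma~\ref{lem:N_Iequivdef}. For this I would imitate the integral computation in the proof of Lemma~\ref{Rob3.1'}: writing $u=e^{\ri ta}$ and $v=e^{\ri b}$ with small $b$, the commutator $(u,v)=e^{\ri ta}e^{-\ri t\,vav^*}$ can be expressed via Baker--Campbell--Hausdorff with leading term $\ri t(a-vav^*)$, and $a-vav^*=-\int_0^1 [e^{\ri sb}ae^{-\ri sb},b]\,ds$. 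Since $[a,A]\subseteq\overline{[I,I]}$ and conjugation by unitaries preserves $\overline{[I,I]}$, every integrand $[e^{\ri sb}ae^{-\ri sb},b]$ lies in $\overline{[I,I]}$, hence so does the integral and all the higher BCH bracket terms. By Lemma~\ref{lem: expi[I,I]}, exponentials of elements of $\overline{[I,I]}\cap A_{\rm sa}$ land in $CU_I$, and a Trotter-product approximation then yields $(u,v)\in CU_I$. Reducing general $u,v$ to products of such small exponentials via the commutator identities finishes this direction.

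For the reverse inclusion $\mathfrak{L}(N_I)\subseteq\mathfrak{n}_I$, I would take $a$ with $\exp(\ri ta)\in N_I$ for all $t$ and differentiate. Since $\exp(\ri ta)\in N_I$, Lemma~\ref{lem:N_Iequivdef} gives $(\exp(\ri ta),\exp(\ri sb))\in CU_I\subseteq U_I$ for all $s,t$ and all $b\in A_{\rm sa}$, so $\exp(\ri ta)\exp(\ri sb)\exp(-\ri ta)\exp(-\ri sb)-1\in I$. Taking the mixed second derivative $\partial_t\partial_s$ at $s=t=0$ recovers the commutator and shows $-[a,b]\in I$; more carefully, since the commutator subgroup lands in $CU_I$ whose relevant elements come from $\overline{[I,I]}$, the derivative lands in $\overline{[I,I]}$ rather than merely $I$. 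Concretely, $\frac{d}{ds}\big|_{s=0}\frac{d}{dt}\big|_{t=0}$ of the group commutator equals $-[a,b]$ (up to sign and the factor $\ri$), and because each group commutator lies in $CU_I$, whose Lie algebra is $\overline{[I,I]}\cap A_{\rm sa}$ (the relevant fact being that $CU_I$ is generated by $\exp$ of such elements and is a closed linear Lie subgroup), we conclude $[a,b]\in\overline{[I,I]}$ for all $b\in A_{\rm sa}$, hence $[a,A]\subseteq\overline{[I,I]}$, i.e. $a\in\mathfrak{n}_I$.

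The main obstacle I anticipate is making the differentiation argument in the second inclusion fully rigorous: one must justify that the mixed derivative of the group commutator $t,s\mapsto(\exp(\ri ta),\exp(\ri sb))$ exists in norm, equals $-[\ri a,\ri b]=[a,b]$ up to constants, and genuinely lands in the Lie algebra $\overline{[I,I]}\cap A_{\rm sa}$ of $CU_I$ rather than just in the closed ideal $I$. The cleanest route is probably the commutator formula from the preliminaries, $\exp([x,y])=\lim_n(\exp(\tfrac1n x)\exp(\tfrac1n y)\exp(-\tfrac1n x)\exp(-\tfrac1n y))^{n^2}$, applied with $x=\ri a$, $y=\ri b$: each factor on the right is a group commutator lying in $CU_I$, so the limit $\exp(\ri[a,b])\cdot(\text{sign})$ lies in the closed subgroup $CU_I$, and then invoking that $CU_I$ is a linear Lie subgroup whose Lie algebra is $\overline{[I,I]}\cap A_{\rm sa}$ gives $[a,b]\in\overline{[I,I]}$ directly, bypassing the delicate termwise differentiation.
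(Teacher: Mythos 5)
Your first inclusion ($\mathfrak{n}_I\subseteq\mathfrak{L}(N_I)$) is correct, though heavier than necessary: you prove that the commutators $(\exp(\ri ta),v)$ land in $CU_I$, via Lemma~\ref{lem:N_Iequivdef} and an integral/BCH argument modelled on Lemma~\ref{Rob3.1'}, whereas the paper only needs them to land in $\widetilde{I}$, which is the very definition of $N_I$: it suffices to observe that every term of the Baker--Campbell--Hausdorff expansion of $\log\big(\exp(\ri uau^*)\exp(-\ri a)\big)$ lies in $I$ (because $[a,A]\subseteq\overline{[I,I]}\subseteq I$), then rescale and use that $N_I$ is a group. Your version works, it just proves more than is needed.

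The second inclusion, however, has a genuine gap. Both of your formulations (the mixed-derivative one and the commutator-formula one) rest on the claim that $CU_I$ is a closed linear Lie subgroup whose Lie algebra is $\overline{[I,I]}\cap A_{\rm sa}$, ``the relevant fact being that $CU_I$ is generated by $\exp$ of such elements.'' That inference is invalid, and the claim itself is false in general: topological generation by exponentials of a set does not identify the Lie algebra of the generated group with the closure of that set. The paper's own remark following Proposition~\ref{prop: I' is perfect} is exactly a counterexample: for $A$ the hyperfinite II$_1$ factor (so $I=A$), $CU_A$ contains $\T$, hence $1$ lies in $\{a\in A_{\rm sa}\mid \exp(\ri ta)\in CU_A\ \forall t\in\Rea\}$, yet $1\notin\overline{[A,A]}_{\rm sa}$. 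So from $\exp\big(t[\ri a,\ri b]\big)\in CU_I$ for all $t$ you may \emph{not} conclude $[a,b]\in\overline{[I,I]}$. What you may conclude, since $CU_I\subseteq U_I$ and the Lie algebra of $U_I$ is $I_{\rm sa}$, is only that $[a,b]\in I$. This is how the paper argues: by the Trotter formula, $\exp(\ri(uau^*-a))$ is a limit of powers of group commutators $(\exp(\ri a/n),u)\in CU_I$, hence lies in $CU_I\subseteq U_I$, giving $uau^*-a\in I$ and so $[a,u]\in I$; the upgrade from $I$ to $\overline{[I,I]}$ is then supplied not by Lie theory but by Lemma~\ref{lem: liecom}: $[a,u]\in[A,A]\cap I\subseteq\overline{[A,A]}\cap I=\overline{[I,I]}$. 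With that one substitution --- conclude membership in $I$ from $CU_I\subseteq U_I$, then invoke Lemma~\ref{lem: liecom} --- your commutator-formula argument becomes a correct proof.
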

\begin{proof}
Denote the set on the right hand side from the statement as $X$.

First, let us show $\mathfrak{n}_I\subseteq X$. Pick $a\in\mathfrak{n}_I$ and $u\in U_A$. We may suppose that $\|a\|$ is small enough so that $\log\big((u,e^{\ri a})\big)=\log \big(\exp(\ri uau^*)\exp(-\ri a)\big)$ is defined and equal to $\ri c$ for some $c\in A_{{\rm sa}}$, which is moreover a converging series given by the Baker-Campbell-Hausdorff formula.

By definition, we have $ua-au\in \overline{[I,I]}\subseteq I$, and thus also $uau^*-a\in I$. Since also, by the Baker-Campbell-Hausdorff formula, and since by definition $[a,A]\in \overline{[I,I]}$, we have \[\log \big(\exp(\ri uau^*)\exp(-\ri a)\big)-\ri (uau^*-a)\in I,\] we obtain $\log \big(\exp(\ri uau^*)\exp(-\ri a)\big)\in I$, so $c\in I$. It follows that $(\exp(\ri a),u)\in \tilde{I}$, so $\exp(\ri a)\in N_I$. Clearly, $\exp(\ri ta)\in N_I$, for any $t\in \Rea$ with $|t|\leq 1$. Since $N_I$
 is a group, $\exp(\ri ta)\in N_I$ for any $t\in \R$, and $\mathfrak{n}_I\subseteq X$ holds.\medskip

Now we show that $X\subseteq \mathfrak{n}_I$. Pick some $a\in X$ and $u\in A$. We check that $[a,u]\in \overline{[I,I]}$. Since the linear span of $U_A$ is $A$ by Lemma~\ref{lem: U_0(A) generates A}, we may suppose that $u\in U_A$. We may also suppose that $\|a\|$ is small enough so that $\log\big(\exp(\ri (uau^*-a))\big)$ is defined and equal to $\ri (uau^*-a)$.

Since $a\in X$, by applying the Trotter product formula, we obtain
\[\exp(\ri (uau^*-a))=\lim_{n\to\infty} \big((\exp(\ri a/n),u)\big)^n\in CU_I.\]

It follows that $uau^*-a\in I$, and thus also $ua-au=[u,a]\in I$. Since $[A,A]\cap I\subseteq\overline{[I,I]}$ by Lemma~\ref{lem: liecom}, we are done.
\end{proof}

\begin{proposition}\label{prop:lieofni}
$N_I$ is a Banach-Lie group with $\mathfrak{n}_I$ as the Lie algebra.
\end{proposition}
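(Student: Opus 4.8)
The plan is to verify that $N_I$ satisfies Definition~\ref{def:Liegrp}, i.e. that it is a closed subgroup of $GL(A)$ for which $\exp(\ri\,\cdot\,)$ restricts to a local homeomorphism between $\mathfrak{n}_I$ and $N_I$ near the origin; the conclusion that $N_I$ is then a Banach-Lie group modelled on $\mathfrak{n}_I$ follows from \cite[Definition 5.32 and Corollary 5.34]{HofMor}, as recorded in Section~\ref{sect2}. Three preliminary points have to be assembled. First, $N_I$ is closed in $GL(A)$: by Lemma~\ref{lem:imageofNI} it equals $P_I^{-1}(\Ze(U_{A/I}))\cap U_A$, the intersection of the closed set $U_A$ with the preimage under the continuous map $P_I$ of the (closed) center of $U_{A/I}$. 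Second, $\mathfrak{n}_I$ is a closed subspace of $A_{\rm sa}$, since the defining condition $[a,x]\in\overline{[I,I]}$ is closed in $a$ for each fixed $x\in A$; in particular it is a Banach space. Third, Lemma~\ref{lem:lieofni} already identifies $\mathfrak{n}_I$ with $\mathfrak{L}(N_I)=\{a\in A_{\rm sa}\mid \exp(\ri ta)\in N_I\ \forall t\in\Rea\}$.

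The heart of the argument is the local homeomorphism property, and here the only nontrivial point is surjectivity near the identity: I must show that if $u\in N_I$ is sufficiently close to $1$ and $u=\exp(\ri a)$ is its unique small self-adjoint logarithm (which exists because $\exp(\ri\,\cdot\,)\colon A_{\rm sa}\to U(A)$ is a local homeomorphism around $0$), then in fact $a\in\mathfrak{n}_I$. Granting this, I would fix a neighbourhood $\Omega$ of $0$ in $A_{\rm sa}$ on which $\exp(\ri\,\cdot\,)$ is a homeomorphism onto its image and on which the logarithm estimate below is valid, and set $V=\Omega\cap\mathfrak{n}_I$. The inclusion $\exp(\ri V)\subseteq \exp(\ri\Omega)\cap N_I$ is immediate from the direction $\mathfrak{n}_I\subseteq\mathfrak{L}(N_I)$ of Lemma~\ref{lem:lieofni}, while the reverse inclusion is exactly the surjectivity claim; hence $\exp(\ri V)=\exp(\ri\Omega)\cap N_I$ is relatively open in $N_I$ and $\exp(\ri\,\cdot\,)\colon V\to\exp(\ri V)$ is a homeomorphism, being the restriction of one. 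This is precisely the local homeomorphism required by Definition~\ref{def:Liegrp}.

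It remains to prove the surjectivity claim, which I expect to be the main obstacle: one knows only that the single element $\exp(\ri a)$ lies in $N_I$, not that the whole one-parameter family $\exp(\ri ta)$ does, so the Trotter argument used in Lemma~\ref{lem:lieofni} is not directly available. I would instead pass to the quotient. By Lemma~\ref{lem:imageofNI}, $P_I(u)=\exp(\ri P_I(a))$ lies in $\Ze(U_{A/I})$, and since $U_{A/I}$ spans $A/I$ by Lemma~\ref{lem: U_0(A) generates A}, the unitary $w:=\exp(\ri P_I(a))$ is central in the algebra $A/I$. For $\|a\|$ small enough we have $\|P_I(a)\|<\pi$, so the spectrum of $w$ avoids $-1$ and $P_I(a)=-\ri\log w$ via continuous functional calculus; since any continuous function of a central element of $A/I$ is again central, $P_I(a)$ is itself central in $A/I$. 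Thus $[a,x]\in I$ for every $x\in A$, and as $[a,x]\in\overline{[A,A]}$ we conclude $[a,x]\in\overline{[A,A]}\cap I=\overline{[I,I]}$ by Lemma~\ref{lem: liecom}, that is, $a\in\mathfrak{n}_I$, as required.
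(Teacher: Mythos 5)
Your proof is correct and takes essentially the same route as the paper's: both reduce the problem to showing that the small self-adjoint logarithm $a$ of an element of $N_I$ near $1$ lies in $\mathfrak{n}_I$, and both do so by passing to the quotient via Lemma~\ref{lem:imageofNI}, using that $U_{A/I}$ spans $A/I$ to identify $\Ze(U_{A/I})$ with $\Ze(A/I)\cap U_{A/I}$, and observing that the (functional-calculus) logarithm of a central unitary is central, giving $P_I(a)\in\Ze(A/I)$. The only difference is cosmetic: at the last step the paper re-exponentiates to get $e^{\ri ta}\in N_I$ for all $t$ and invokes Lemma~\ref{lem:lieofni}, whereas you verify the defining condition $[a,A]\subseteq\overline{[A,A]}\cap I=\overline{[I,I]}$ of $\mathfrak{n}_I$ directly via Lemma~\ref{lem: liecom}.
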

\begin{proof}
We show that for every $a\in A_{{\rm sa}}$ such that $\|a\|$ is sufficiently small we have $a\in\mathfrak{n}_I$ if and only if $\exp(\ri a)\in N_I$ since then we will have that $\exp$ induces a local homeomorphism between $\mathfrak{n}_I$ and $N_I$ and we can conclude by \cite[Definition 5.32 and Corollary 5.34]{HofMor}.

By Lemma~\ref{lem:lieofni}, if $a\in\mathfrak{n}_I$ then $\exp(\ri a)\in N_I$. So conversely assume that $e^{\ri a}\in N_I$ and that $\|a\|$ is small enough so that $\log(e^{\ri a})$ and $\log(e^{P(\ri a)})$ are defined (and equal to $\ri a$ and $P(\ri a)$ respectively). We show that $a\in\mathfrak{n}_I$. Again by Lemma~\ref{lem:lieofni}, it suffices to show that $e^{\ri ta}\in N_I$, for all $t\in\Rea$.
It is clear that in fact it is enough to prove that for $t$ such that $|t|\leq 1$. Let $P_I: A\rightarrow A/I$ be the quotient map. Then by Lemma~\ref{lem:imageofNI} we have \[P_I(e^{\ri a})\in \Ze(U_{A/I})= \Ze(A/I)\cap U_{A/I},\] 

where the last equality holds since by Lemma~\ref{lem: U_0(A) generates A}, $U_{A/I}$ spans $A/I$.

Consequently, \[P_I(a)=P_I\big(\log(e^{\ri a})/\ri\big)=\log\big(P_I(e^{\ri a})\big)/\ri=\in \Ze(A/I)\]

and thus also $P_I(ta)\in \Ze(A/I)$, for any $t$ such that $|t|\leq 1$. 

Therefore $\exp(\ri P_I(ta))\in \Ze(A/I)\cap U_{A/I}=\Ze(U_{A/I})$ and since $P_I(\exp(\ri ta))=\exp(\ri P_I(ta))$ we get that $\exp(\ri ta)\in N_I$ by Lemma~\ref{lem:imageofNI}. 

\end{proof}

\begin{lemma}\label{lem:AdU(A)normality}
Every closed connected Lie normal subgroup of $U_A$ is normal in $U(A)$.
\end{lemma}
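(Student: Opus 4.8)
The plan is to reduce the statement to an invariance property of the Lie algebra of $G$ and then to establish that invariance by a single commutator identity together with Lemma~\ref{lem: liecom}. Write $L:=\mathfrak{L}(G)\subseteq A_{\rm sa}$ for the Lie algebra of $G$. Since $G$ is a connected Lie subgroup, it is generated by $\{e^{\ri a}\mid a\in L\}$, and for every $u\in U(A)$ one has $u e^{\ri a}u^*=e^{\ri\, uau^*}$. Hence $G\trianglelefteq U(A)$ will follow once I show that $L$ is invariant under $\mathrm{Ad}(u)$ for every $u\in U(A)$, i.e. that $uau^*\in L$ whenever $a\in L$ and $u\in U(A)$.

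To locate $L$, I would apply Theorem~\ref{thm:setofnormalsubgrps} to the closed normal subgroup $G$: there is a perfect ideal $I\subseteq A$ with $CU_I\le G\le N_I$. Passing to Lie algebras (using Lemma~\ref{lem: expi[I,I]} for $CU_I$ and Proposition~\ref{prop:lieofni} for $N_I$) this gives \[\overline{[I,I]}\cap A_{\rm sa}=\mathfrak{L}(CU_I)\subseteq L\subseteq \mathfrak{n}_I.\] In particular each $a\in L$ lies in $\mathfrak{n}_I$, so $[a,A]\subseteq\overline{[I,I]}$.

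Now I would fix $a\in L$ and $u\in U(A)$ and analyse the self-adjoint element $uau^*-a$ through the identity $uau^*-a=[ua,u^*]=u[a,u^*]$. The first expression exhibits $uau^*-a$ as a single commutator, so $uau^*-a\in[A,A]$; the second, together with $a\in\mathfrak{n}_I$ (whence $[a,u^*]\in\overline{[I,I]}\subseteq I$), shows $uau^*-a\in I$. Therefore $uau^*-a\in[A,A]\cap I\subseteq\overline{[A,A]}\cap I=\overline{[I,I]}$ by Lemma~\ref{lem: liecom}, and being self-adjoint it lies in $\overline{[I,I]}\cap A_{\rm sa}\subseteq L$. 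Since $a\in L$ as well, $uau^*=a+(uau^*-a)\in L$, which is exactly the invariance needed; running the same argument with $u^*$ in place of $u$ yields $uLu^*=L$ and hence $uGu^*=G$.

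The main obstacle to anticipate is precisely that $u$ may lie outside $U_A$: for such $u$ the automorphism $\mathrm{Ad}(u)$ need not belong to $V_A$, so one cannot simply invoke continuity and the density of $\mathrm{Ad}(U_A)$ to transport the $U_A$-invariance of $L$ up to $U(A)$. The device that circumvents this is the rewriting $uau^*-a=[ua,u^*]$, which trades conjugation by an arbitrary unitary for a genuine commutator in $[A,A]$; everything else reduces to checking that this element simultaneously lands in the ideal $I$ and to quoting $\overline{[A,A]}\cap I=\overline{[I,I]}$. A minor point to verify carefully is that $\overline{[I,I]}\cap A_{\rm sa}$ is genuinely contained in $L$, which follows from $CU_I\le G$ together with the description of $\mathfrak{L}(CU_I)$ furnished by Lemma~\ref{lem: expi[I,I]}.
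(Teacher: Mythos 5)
Your proof is correct, but it takes a genuinely different route from the paper's. Both arguments begin identically, invoking Theorem~\ref{thm:setofnormalsubgrps} to sandwich the given subgroup between $CU_I$ and $N_I$ for a perfect ideal $I$. From there the paper works at the \emph{group} level: for $u\in U(A)$ and $g$ in the subgroup it shows the commutator $(u,g)$ lies in $CU_I$, by first checking $(u,g)\in\widetilde{I}$, then reducing to $g=e^{\ri b}$ with $b$ small in the Lie algebra, differentiating $(u,e^{\ri tb})\in\widetilde{I}$ at $t=0$ (together with the fact that a commutator in a unital Banach algebra cannot be a nonzero scalar multiple of the unit) to get $ubu^*-b\in I$, and finally using the Baker--Campbell--Hausdorff expansion and Lemma~\ref{lem: expi[I,I]} to conclude $(u,g)\in CU_I$, whence $ugu^*\in CU_I\cdot H\subseteq H$. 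You instead work entirely at the \emph{Lie algebra} level: the identity $uau^*-a=[ua,u^*]=u[a,u^*]$ shows at once that this element is a single commutator (hence in $[A,A]$) and, since $a\in\mathfrak{n}_I$ by Lemma~\ref{lem:lieofni}, lies in $I$; Lemma~\ref{lem: liecom} then places it in $\overline{[I,I]}\cap A_{\rm sa}\subseteq\mathfrak{L}(G)$, so $\mathrm{Ad}(u)$ preserves $\mathfrak{L}(G)$ and hence $G$, as $G$ is connected and therefore generated by exponentials of its Lie algebra. This is arguably cleaner: it trades the BCH convergence issues, the differentiation of one-parameter families, and the scalar-commutator theorem for one algebraic identity. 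One caveat: your displayed equality $\overline{[I,I]}\cap A_{\rm sa}=\mathfrak{L}(CU_I)$ is not justified and can fail --- the remark at the end of Section~\ref{sect3} exhibits (for the hyperfinite II$_1$ factor) a case where the Lie algebra of $CU_A$ is strictly larger than $\overline{[A,A]}_{\rm sa}$ --- but your argument only uses the valid inclusion $\overline{[I,I]}\cap A_{\rm sa}\subseteq\mathfrak{L}(CU_I)$ (self-adjoint elements of $[I,I]$ generate one-parameter subgroups of the closed group $CU_I$ by Lemma~\ref{lem: expi[I,I]}, and $[I,I]$ is $*$-invariant, so one may pass to limits), so this is a misstatement rather than a gap. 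Also, Proposition~\ref{prop:lieofni} is not actually needed; Lemma~\ref{lem:lieofni} alone yields $\mathfrak{L}(G)\subseteq\mathfrak{n}_I$.
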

\begin{proof}
Let $H$ be a closed connected Lie normal subgroup of $U_A$. By Theorem~\ref{thm:setofnormalsubgrps} there is a perfect ideal $I$ of $A$ such that $CU_I\leq H\leq N_I$. Recall from the proof of Theorem~\ref{thm:setofnormalsubgrps} that we have $[H,A]\subseteq I$. One can check directly from their respective definitions that both $CU_I$ and $N_I$ are normal in $U(A)$. It suffices to verify that for every $u\in U(A)$ and $g\in H$ we have $(u,g)\in CU_I$ since then $ugu^*\in CU_I\cdot H\subseteq H$.

Let us first check $(u,g)\in\tilde{I}$. This follows since $ug-gu\in I$ as $[H,A]\subseteq I$, thus for $x=ug-gu\in I$ we have $ugu^*g^*=(x+gu)u^*g^*=xu^*g^*+1\in\tilde{I}$.

Now since $H$ is connected and Lie, $g=e^{\ri b_1}\cdots e^{\ri b_n}$, for some self-adjoint elements $b_1,\ldots,b_n$ from the Lie algebra of $H$. Without loss of generality we can suppose that $g=e^{\ri b}$, for $\|b\|$ small enough, so that $c:=\log (u,g)$ is defined and the Baker--Campbell--Hausdorff formula for $e^{\ri ubu^*}e^{-\ri b}$ converges. 
Thus, $(u,e^{\ri b})=e^{\ri c}.$ 
The rest of the argument is the same as \cite[Lemma 3.2]{Rob19}. We include the argument for completeness. For each $t\in \R$, $e^{\ri tb}\in H$, so we differentiate $(u,e^{\ri tb})\in \tilde{I}$ at $t=0$ to obtain $ubu^*-b\in \tilde{I}$, which implies $ubu^*-b\in I$ (as in \cite[Lemma 3.2]{Rob19}, we use the fact (see e.g., \cite[Theorem 13.6]{RudinFunctionalAnalysisMR1157815}) that a commutator in a unital Banach algebra cannot be a nonzero scalar multiple of the unit).   
Then 
$$\log (u,e^{\ri b})-\ri(ubu^*-b)=\log (e^{\ri ubu^*}e^{-\ri b})-\ri\underbrace{(ubu^*-b)}_{\in I}\in \overline{[A,H]}\subset I,$$
whence $\ri c\in I\cap \overline{[A,A]}=\overline{[I,I]}$ and thus $e^{\ri c}\in CU_I$ by Lemma \ref{lem: expi[I,I]}. 
\end{proof}
\begin{corollary}\label{cor:AdUA-inv-algebras}
Let $A$ be a unital $C^*$-algebra and let $B\subseteq A$ be a $C^*$-subalgebra that is $\mathrm{Ad}_{U_A}$-invariant. Then $B$ is $\mathrm{Ad}_{U(A)}$-invariant.
\end{corollary}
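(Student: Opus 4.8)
The plan is to attach to the subalgebra $B$ a closed connected linear Lie subgroup of $U_A$, use Lemma~\ref{lem:AdU(A)normality} to promote its $U_A$-normality to $U(A)$-normality, and then read the invariance of $B$ back off from the Lie algebra. Concretely, I would set $U_B$ to be the connected component of the unit of the group $\{u\in U(A)\mid u-1\in B\}$, exactly as the paper does for ideals. The key observation is that the verification that $U_B$ is a closed connected linear Lie subgroup of $U_A$ with $\mathfrak{L}(U_B)=B_{\rm sa}$ uses only that $B$ is a \emph{closed $*$-subalgebra}, not that it is an ideal: the exponential $a\mapsto e^{\ri a}$ sends $B_{\rm sa}$ into $1+B$ because $e^{\ri b}-1=\sum_{k\ge 1}\frac{(\ri b)^k}{k!}\in B$, and conversely the logarithm maps a neighbourhood of $1$ in $1+B$ back into $\ri B_{\rm sa}$, since $\log(1+(u-1))$ is a constant-free power series in $u-1\in B$. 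Differentiating at $0$ a one-parameter subgroup $t\mapsto e^{\ri ta}$ contained in $U_B$ shows $\ri a\in B$, so $\mathfrak{L}(U_B)$ is \emph{exactly} $B_{\rm sa}$, with no spurious scalar part; this precise equality is what makes the final step work, and it is the reason I avoid the alternative of working with the unital algebra $B+\C 1$.

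Next I would check that $U_B$ is normal in $U_A$. For $u\in U_A$ and $v$ with $v-1\in B$ we have $uvu^*-1=u(v-1)u^*\in uBu^*=B$ by the $\mathrm{Ad}_{U_A}$-invariance of $B$, so conjugation by $u$ preserves $\{u\in U(A)\mid u-1\in B\}$ and, being a homeomorphism fixing $1$, preserves its identity component $U_B$. Thus $U_B$ is a closed connected Lie normal subgroup of $U_A$, and Lemma~\ref{lem:AdU(A)normality} upgrades this to normality of $U_B$ in all of $U(A)$.

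Finally I would recover the invariance of $B$. Fix $u\in U(A)$ and $b\in B_{\rm sa}$. For every $t\in\R$ the element $e^{\ri tb}$ lies in $U_B$, so by normality $u e^{\ri tb}u^*=e^{\ri t\,ubu^*}\in U_B$; hence the generator $ubu^*$ lies in $\mathfrak{L}(U_B)=B_{\rm sa}$. This gives $uB_{\rm sa}u^*\subseteq B_{\rm sa}$, and since $B=B_{\rm sa}+\ri B_{\rm sa}$ we conclude $uBu^*\subseteq B$; applying the same to $u^*$ yields the reverse inclusion, so $uBu^*=B$ for every $u\in U(A)$.

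I expect the only genuine obstacle to be the first paragraph: one must make sure that the Lie-theoretic apparatus the paper develops for ideals transfers verbatim to an arbitrary closed $*$-subalgebra, and in particular that $\mathfrak{L}(U_B)$ contains no scalars beyond $B_{\rm sa}$. Everything after that is a direct application of Lemma~\ref{lem:AdU(A)normality} together with the standard fact that a one-parameter subgroup trapped inside a Lie subgroup has its generator in the Lie algebra.
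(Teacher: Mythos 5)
Your proof is correct and follows essentially the same route as the paper: form $U_B$ as the identity component of $\{u\in U(A)\mid u-1\in B\}$, observe it is a closed connected Lie normal subgroup of $U_A$, and invoke Lemma~\ref{lem:AdU(A)normality} to upgrade normality to all of $U(A)$. The only divergence is the final recovery step: the paper cites Lemma~\ref{lem: U_0(A) generates A} to say that $U_B$ spans $B$, whereas you differentiate the one-parameter subgroups $t\mapsto ue^{\ri tb}u^*$ trapped in $U_B$ to land in $\mathfrak{L}(U_B)=B_{\rm sa}$; your variant is marginally more careful, since the spanning lemma as stated applies to unital algebras (for non-unital $B$ it yields the span $B+\C 1$), while your Lie-algebra argument gives exactly $B_{\rm sa}$ with no scalar ambiguity.
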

\begin{proof}
Let $A$ and $B$ be as in the statement. Then $U_B$ is a closed normal subgroup of $U_A$, which is moreover connected and Lie. Thus by Lemma~\ref{lem:AdU(A)normality}, $U_B$ is normal in $U(A)$, i.e. it is $\mathrm{Ad}_{U(A)}$-invariant. Since $U_B$ spans $B$ by Lemma~\ref{lem: U_0(A) generates A},  also $B$ is $\mathrm{Ad}_{U(A)}$-invariant.
\end{proof}

\section{Group characterizations of simplicity of $C^*$-algebras}\label{sect7}
In this final section, we investigate conditions on the structure of $U_A$ that guarantee simplicity of $A$. That is, in a way, a reverse problem to what we consider in Theorem~\ref{thm:simplicity}. We find applications to reduced group $C^*$-algebras.
\subsection{General conditions on simplicity of $C^*$-algebras}
\begin{lemma}(well-known)\label{lem: center is trivial}
Let $A$ be a unital simple $C^*$-algebra. Then $\mathcal{Z}(A)=\C$.  
\end{lemma}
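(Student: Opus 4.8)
The plan is to show that any element $z$ in the center $\mathcal{Z}(A)$ of a unital simple $C^*$-algebra $A$ must be a scalar multiple of the identity. The key observation is that $\mathcal{Z}(A)$ is itself a commutative unital $C^*$-subalgebra of $A$, so by Gelfand duality it is isomorphic to $C(X)$ for some compact Hausdorff space $X$; equivalently, it is the closed linear span of its projections. The natural strategy is therefore to analyze the projections lying in $\mathcal{Z}(A)$ and to use simplicity to rule out any nontrivial ones.

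First I would reduce to projections: it suffices to show that the only projections in $\mathcal{Z}(A)$ are $0$ and $1$, since a commutative $C^*$-algebra with no nontrivial projections whose spectrum is connected and contains a single point must be $\C$. More carefully, if $z=z^*\in\mathcal{Z}(A)$ is a central self-adjoint element, then every spectral projection of $z$ (obtained via continuous functional calculus applied to $z$) is again central, because the functional calculus of a central element stays in the $C^*$-algebra it generates, which is contained in $\mathcal{Z}(A)$. Thus if all central projections are trivial, the spectrum of every central self-adjoint element is a single point, forcing $z\in\C 1$; since $A_{\mathrm{sa}}$ spans $A$, this yields $\mathcal{Z}(A)=\C 1$.

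The heart of the argument is the following. Let $p\in\mathcal{Z}(A)$ be a projection. Then $pA=pAp$ is a two-sided ideal of $A$: indeed, for any $a\in A$ we have $ap=pa\in pA$ because $p$ is central, so $pA$ is closed under left and right multiplication by elements of $A$, and it is norm-closed as the range of the continuous idempotent map $a\mapsto pa$. By simplicity of $A$, the ideal $pA$ must be either $\{0\}$ or all of $A$, giving $p=0$ or $p=1$. This is the step where simplicity is used, and I expect it to be the main (though short) conceptual point; the remaining verifications that $pA$ is genuinely a closed two-sided ideal and that functional calculus preserves centrality are routine.

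Combining these steps completes the proof: every projection in $\mathcal{Z}(A)$ is trivial, hence every central self-adjoint element has one-point spectrum and lies in $\C 1$, and therefore $\mathcal{Z}(A)=\C 1=\C$. An alternative, even more economical route avoids projections entirely: take any $z\in\mathcal{Z}(A)$ and note that for each character (equivalently, each point $\lambda$ in the spectrum of $z$ regarded inside the commutative algebra $C^*(z,1)\subseteq\mathcal{Z}(A)$), the element $z-\lambda 1$ is a non-invertible central element, so it generates a proper two-sided ideal $\mathrm{Id}_A(z-\lambda 1)=A(z-\lambda 1)$; simplicity forces this ideal to be $\{0\}$, whence $z=\lambda 1$. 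Either formulation is straightforward, and since the statement is labeled well-known, I would present whichever is shortest.
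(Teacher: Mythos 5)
Your primary route has a genuine gap at the step ``all central projections are trivial, hence the spectrum of every central self-adjoint element is a single point.'' Continuous functional calculus produces nontrivial projections only when the spectrum is disconnected: the characteristic function of a part of the spectrum is continuous exactly when that part is clopen. Genuine spectral projections come from Borel functional calculus and live in the enveloping von Neumann algebra, not in $A$ or in $\mathcal{Z}(A)$. So the absence of nontrivial projections in $\mathcal{Z}(A)$ only tells you that the Gelfand spectrum of $\mathcal{Z}(A)$ is connected, not that it is a point: a priori $\mathcal{Z}(A)$ could be isomorphic to $C[0,1]$, which has no nontrivial projections and plenty of non-scalar elements, and the projection argument cannot rule this out. (Your verification that $pA$ is a closed two-sided ideal is fine; it is the reduction of the full statement to projections that fails.)

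Your alternative route at the end, however, is correct and is essentially the paper's proof read contrapositively: the paper takes a nonzero central $a$, notes that the closed ideal $\overline{Aa}$ equals $A$ by simplicity, obtains $b$ with $\|ba-1\|<1$, concludes that $ba$ and hence (by centrality) $a$ is invertible, and finishes with the Gelfand--Mazur theorem (cited in the paper as Banach--Mazur); you instead take $\lambda$ in the spectrum of a central $z$ and argue that the non-invertible central element $z-\lambda 1$ generates a proper closed ideal, which by simplicity is $\{0\}$, so $z=\lambda 1$. These are the same idea. Note two small repairs your version needs: the generated ideal is $\overline{A(z-\lambda 1)}$ (closure required), and its properness is not automatic but is exactly the paper's approximation step --- if $\overline{A(z-\lambda 1)}=A$ then some $b(z-\lambda 1)$ lies within distance $1$ of the unit, hence is invertible, hence $z-\lambda 1$ is invertible by centrality, contradicting $\lambda\in\sigma(z)$. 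Your appeal to a point of the spectrum plays the role of Gelfand--Mazur (nonemptiness of the spectrum). So: drop the projection argument, keep the alternative, and insert that one-line properness justification.
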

\begin{proof} Let $a\in \mathcal{Z}(A)\setminus \{0\}$. Consider $I=\overline{Aa}=\overline{aA}$, which is a closed two-sided ideal of $A$. Since $A$ is unital, $a\in I$, so $I\neq \{0\}$. Therefore, by the simplicity of $A$, $I=A$ holds. Thus, there exists $b\in A$ such that $\|ba-1\|<1$. Then $ba$, hence $a$ is invertible. This shows that any nonzero element in $\mathcal{Z}(A)$ is invertible, whence $\mathcal{Z}(A)=\C$ by Banach--Mazur theorem. 
\end{proof}

The following result of Cuntz and Pedersen from \cite[Proposition 2.7]{CP79} will be used often, so we state it here.
For a unital $C^*$-algebra $A$, we denote by $T(A)$ the set of all tracial states on $A$. 
\begin{theorem}[\cite{CP79}]\label{thm:CuntzPedersen}
Let $A$ be a $C^*$-algebra. If $A$ does not admit a tracial state, then $\overline{[A,A]}=A$ holds. If $A$ admits a tracial state, then $\overline{[A,A]}=\bigcap_{\tau\in T(A)} \ker(\tau)$ holds.  
\end{theorem}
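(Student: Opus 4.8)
The plan is to prove the single unified statement $\overline{[A,A]}=\bigcap_{\tau\in T(A)}\ker(\tau)$, with the convention that an empty intersection is all of $A$ (which subsumes the trace-free case), by Hahn--Banach duality, reducing everything to a decomposition result for tracial functionals. First I would reduce to self-adjoint elements: since $(xy-yx)^*=-(x^*y^*-y^*x^*)$, the subspace $\overline{[A,A]}$ is $*$-closed and hence equals $C+iC$ for $C:=\overline{[A,A]}\cap A_{\rm sa}$; likewise $\bigcap_\tau\ker(\tau)$ is $*$-closed because tracial states are Hermitian. Thus it suffices to identify the closed real subspace $C$ of the real Banach space $A_{\rm sa}$. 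The inclusion $\subseteq$ is immediate, as every tracial state annihilates commutators.

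By the bipolar theorem in $A_{\rm sa}$, an element $a\in A_{\rm sa}$ lies in $C$ if and only if $f(a)=0$ for every bounded real-linear functional vanishing on $C$. Identifying such functionals with bounded Hermitian functionals on $A$, the condition $f|_C=0$ is equivalent to $f$ being \emph{tracial}, i.e. $f(xy)=f(yx)$ for all $x,y$. So the whole theorem reduces to the following decomposition lemma, which I regard as the heart of the matter: every bounded Hermitian tracial functional $f$ on $A$ can be written as $f=f_+-f_-$ with $f_\pm$ bounded \emph{positive} tracial functionals.

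To prove this lemma I would pass to the enveloping von Neumann algebra $A^{**}$ and let $\tilde f$ be the unique normal Hermitian extension of $f$. A routine normality-plus-density argument shows that $\tilde f$ is again tracial. Now apply the unique, minimal Jordan decomposition $\tilde f=\tilde f_+-\tilde f_-$ into normal positive functionals on the von Neumann algebra $A^{**}$. The key point is that traciality of $\tilde f$ means exactly that $\tilde f\circ\mathrm{Ad}(u)=\tilde f$ for every unitary $u\in A^{**}$; since $\mathrm{Ad}(u)$ is a normal $*$-automorphism it carries the Jordan decomposition of $\tilde f$ to that of $\tilde f\circ\mathrm{Ad}(u)=\tilde f$, so by uniqueness $\tilde f_\pm\circ\mathrm{Ad}(u)=\tilde f_\pm$. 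Hence $\tilde f_\pm$ are tracial, and their restrictions $f_\pm$ to $A$ are the desired positive tracial functionals.

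Finally I would assemble the two cases. Each nonzero positive tracial functional $f_\pm$ is, after normalizing by its norm, a tracial state, so $f_\pm=\|f_\pm\|\,\tau_\pm$ with $\tau_\pm\in T(A)$. If $T(A)=\emptyset$ there are no nonzero positive tracial functionals, hence by the lemma no nonzero Hermitian tracial functional, so $C^\perp=\{0\}$ and $C=A_{\rm sa}$, giving $\overline{[A,A]}=A$. If $T(A)\neq\emptyset$ and $a\in A_{\rm sa}$ satisfies $\tau(a)=0$ for all $\tau\in T(A)$, then $f_\pm(a)=0$ and so $f(a)=0$ for every Hermitian tracial $f$; the bipolar theorem then gives $a\in C$, establishing the reverse inclusion. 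The main obstacle is clearly the decomposition lemma: forcing the positive and negative parts to inherit traciality, for which the uniqueness of the normal Jordan decomposition together with $\mathrm{Ad}(u)$-invariance is the decisive tool.
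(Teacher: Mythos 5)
Your proof is correct, but there is nothing in the paper to compare it against: the paper states this result as an imported theorem of Cuntz and Pedersen \cite{CP79} and gives no proof of it, so the relevant comparison is with the original argument. Your reduction via Hahn--Banach duality in $A_{\rm sa}$ to the decomposition lemma (every bounded Hermitian tracial functional is a difference of two positive tracial functionals) is the same duality step that underlies \cite{CP79}; where you genuinely diverge is in how that lemma is proved. Cuntz and Pedersen obtain it through a lengthy analysis of their equivalence relation $a\sim b$ (meaning $a=\sum_n x_n^*x_n$ and $b=\sum_n x_nx_n^*$) on positive elements, which also yields finer order-theoretic information about $A_{\rm sa}/\overline{[A,A]}_{\rm sa}$; you instead pass to the enveloping von Neumann algebra $A^{**}$, where everything becomes soft. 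The steps you label routine are indeed routine: the normal extension $\tilde f$ is tracial because, for fixed $x\in A$, the normal functionals $Y\mapsto\tilde f(xY)$ and $Y\mapsto\tilde f(Yx)$ agree on the weak$^*$-dense subalgebra $A$, and then the same argument in the other variable upgrades this to all of $A^{**}$; the minimal Jordan decomposition of a normal Hermitian functional is unique and is carried by any $*$-automorphism to the minimal Jordan decomposition of the composed functional (automorphisms preserve positivity, normality and norms), so $\mathrm{Ad}(u)$-invariance of $\tilde f$ forces $\mathrm{Ad}(u)$-invariance of $\tilde f_\pm$ for every unitary $u\in A^{**}$; and since unitaries span the unital algebra $A^{**}$ (this is where passing to the bidual, rather than working in a possibly non-unital $A$, pays off), the parts are tracial and restrict to the desired positive tracial functionals on $A$. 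The final case analysis, with the normalization $f_\pm/\|f_\pm\|\in T(A)$ when nonzero, is sound. In short: what your route buys is a short, self-contained proof from standard von Neumann algebra theory; what the citation buys the paper is brevity.
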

\begin{lemma}\label{lem UA=CUA condition}
Let $A$ be a unital $C^*$-algebra. If $A$ does not admit a tracial state, then $U_A=CU_A$, and if $A$ admits exactly one tracial state, then $U_A/CU_A$ is trivial or equal to $\T$.
\end{lemma}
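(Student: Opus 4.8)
The plan is to use the Lie-theoretic structure of $U_A$ together with the Cuntz--Pedersen description of $\overline{[A,A]}$ from Theorem~\ref{thm:CuntzPedersen}. The key observation is that $CU_A$ is the closed normal subgroup $CU_I$ associated to the ideal $I = \mathrm{Id}([A,A])$, and by Lemma~\ref{lem: expi[I,I]} it is generated as a topological group by $\{e^{\ri x}\mid x\in [A,A]\cap A_{\rm sa}\}$, hence contains $\{e^{\ri x}\mid x\in \overline{[A,A]}\cap A_{\rm sa}\}$. On the other hand, by Lemma~\ref{lem: U_0(A) generates A} the group $U_A$ is generated by $\{e^{\ri a}\mid a\in A_{\rm sa}\}$. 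So the whole question reduces to understanding how much of $A_{\rm sa}$ lies in $\overline{[A,A]}$, which is precisely what Theorem~\ref{thm:CuntzPedersen} controls.

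First I would treat the traceless case. If $A$ admits no tracial state, Theorem~\ref{thm:CuntzPedersen} gives $\overline{[A,A]}=A$, so $\overline{[A,A]}\cap A_{\rm sa}=A_{\rm sa}$. Then every generator $e^{\ri a}$ of $U_A$ with $a\in A_{\rm sa}$ already lies in $CU_A$ by Lemma~\ref{lem: expi[I,I]} (one should note here that the relevant ideal is $I=\mathrm{Id}([A,A])=A$, so $CU_I=CU_A$). Since these generate $U_A$ and $CU_A$ is a closed subgroup, we conclude $U_A=CU_A$.

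Next I would handle the single-trace case. Let $\tau$ be the unique tracial state; by Theorem~\ref{thm:CuntzPedersen}, $\overline{[A,A]}=\ker(\tau)$, which is a closed real-codimension-one subspace of $A_{\rm sa}$ (more precisely, $A_{\rm sa}=\overline{[A,A]}_{\rm sa}\oplus \R 1$, using $\tau(1)=1$). The plan is to define a homomorphism $U_A/CU_A\to \T$ and show it is an isomorphism onto a closed subgroup of $\T$, which must then be trivial or all of $\T$. The natural candidate is induced by the trace: for a self-adjoint $a$ write $e^{\ri a}\mapsto e^{\ri \tau(a)}\in\T$, and observe that the generators $e^{\ri x}$ of $CU_A$ with $x\in\ker(\tau)_{\rm sa}$ map to $1$, so the map is trivial on $CU_A$. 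The induced map $U_A/CU_A\to\T$ is a continuous surjective homomorphism (surjectivity from scaling $a=t1$), and its image is a closed nontrivial subgroup of $\T$.

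The main obstacle is verifying that the map $e^{\ri a}\mapsto e^{\ri\tau(a)}$ is well-defined and genuinely injective modulo $CU_A$, i.e. that $U_A/CU_A$ is \emph{exactly} $\T$ (not larger) in the sense that the kernel of the trace map is precisely $CU_A$. Well-definedness on products requires care because $\tau$ is only a trace, not a character, so $\tau(a+b)$ need not relate simply to $\tau$ on the BCH product; here I would instead use that the determinant-type map $\det_\tau(u)=e^{\ri\tau(a)}$ for $u=e^{\ri a}$ extends to a continuous homomorphism on $U_A$ via the de la Harpe--Skandalis determinant (the trace $\tau$ gives a homomorphism $U_A\to\T$ whose derivative at the identity is $a\mapsto \ri\tau(a)$), and that its kernel is connected and contains all $e^{\ri x}$ with $x\in\ker(\tau)$. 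Identifying this kernel with $CU_A$ is the crux: one inclusion is immediate from Lemma~\ref{lem: expi[I,I]}, and the reverse requires that any element of $U_A$ with trivial determinant lies in the closed subgroup generated by exponentials of $\ker(\tau)_{\rm sa}$, which I expect follows from the structure already established for $CU_A=CU_{\mathrm{Id}([A,A])}$ together with the codimension-one splitting $A_{\rm sa}=\overline{[A,A]}_{\rm sa}\oplus\R 1$. Once the trace map realizes $U_A/CU_A$ as a nontrivial closed subgroup of $\T$, the dichotomy ``trivial or $\T$'' follows from the classification of closed subgroups of $\T$ only if one can rule out finite cyclic images; I would argue that the image is all of $\T$ by surjectivity of $t\mapsto e^{\ri t\tau(1)}=e^{\ri t}$, so in fact $U_A/CU_A\cong\T$ whenever $CU_A\neq U_A$, giving the stated conclusion.
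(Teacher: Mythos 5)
Your traceless case is correct and is exactly the paper's argument: Cuntz--Pedersen (Theorem~\ref{thm:CuntzPedersen}) gives $\overline{[A,A]}=A$, the $*$-invariance of $[A,A]$ lets you pass to self-adjoint elements of the closure, and then all exponential generators of $U_A$ lie in the closed group $CU_A$.

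The unique-trace case, however, has a genuine gap, and it is not quite where you placed it. You worry about showing the kernel of your trace character equals $CU_A$; in fact, if a continuous homomorphism $\delta:U_A\to\T$ with $\delta(e^{\ri a})=e^{\ri\tau(a)}$ existed at all, the kernel identification would be easy (abelian target plus continuity kill $CU_A$, and your splitting $a=b+r1$ does the rest). The real problem is that no such homomorphism exists in general, even locally. Take $A=M_2(\C)$ with $\tau=\tfrac12\mathrm{Tr}$: then $1=e^{\ri\cdot 0}=e^{\ri\,\mathrm{diag}(2\pi,0)}$, and the two representations give the values $1$ and $e^{\ri\tau(\mathrm{diag}(2\pi,0))}=e^{\ri\pi}=-1$; equivalently, every continuous character of $U(2)$ is a power of $\det$ and hence has derivative $n\,\mathrm{Tr}$ at the identity, never $\tau=\tfrac12\mathrm{Tr}$ --- yet $U_A/CU_A=U(2)/SU(2)\cong\T$ there. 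Structurally, the scalar $e^{\ri r}$ in the decomposition $u=(e^{\ri b_1}\cdots e^{\ri b_n})e^{\ri r}$ is only determined modulo $\T\cap CU_A$, which can be a nontrivial finite group ($\{\pm1\}$ in $M_2(\C)$) or even all of $\T$ (the hyperfinite II$_1$ factor, cf.\ the remark at the end of Section~\ref{sect3}, or the irrational rotation algebra). The de la Harpe--Skandalis determinant does not repair this: it is a homomorphism into $\R/\tau_*K_0(A)$, not into $\T$; this target is $\T$ only when $\tau_*K_0(A)$ is cyclic, and is not even Hausdorff when $\tau_*K_0(A)$ is dense in $\R$, so the ``closed subgroup of $\T$'' dichotomy you invoke is unavailable, and equating its kernel with the closed subgroup $CU_A$ is not something the lemmas at hand provide.

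The fix is much simpler and is the paper's route: the decomposition you already wrote down shows $U_A=CU_A\cdot\T$, i.e.\ the quotient map restricted to scalars gives a \emph{surjective} continuous homomorphism $\T\to U_A/CU_A$ --- there is no need for it to be injective, so no character of $U_A$ has to be constructed. Since $CU_A$ is closed, $U_A/CU_A$ is a Hausdorff group, hence topologically isomorphic to $\T/(\T\cap CU_A)$; as $\T\cap CU_A$ is a closed subgroup of $\T$, it is finite or all of $\T$, and therefore $U_A/CU_A$ is $\T$ or trivial. In short: map out of $\T$ onto the quotient, rather than out of the quotient into $\T$.
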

\begin{proof}
If $A$ does not admit a tracial state,
then by Theorem~\ref{thm:CuntzPedersen}, $\overline{[A,A]}=A$, thus by the combination of Lemma~\ref{lem: liecom} and \cite[Lemma 3.1]{Rob19} we get that $CU_A=U_A$. Suppose that $A$ admits exactly one tracial state. Then again by Theorem~\ref{thm:CuntzPedersen} we get $A=\overline{[A,A]}\oplus\Com$. Denote by $Q$ the quotient map $U_A\to U_A/CU_A$. Let $u\in U_A$. Write $u$ as $\exp(\ri a_1)\cdots\exp(\ri a_n)$, where $a_1,\ldots,a_n\in A_{{\rm sa}}$. Write $a_k=b_k+r_k1$, $b_k\in \overline{[A,A]}_{\rm sa}, r_k\in \R$ for each $k=1,\dots,n$. 
Then \[u=e^{\ri b_1}\cdots e^{\ri b_n}\exp(\ri r)\]
where $r=\sum_{k=1}^{n}r_k$, so $Q(u)=Q(e^{\ri r})$ because $e^{\ri b_k}\in CU_{A}$ by \cite[Lemma 3.1]{Rob19}. Consequently, $Q[U_A]=Q[\T]$. Since $Q$ is continuous and $U_A$ is connected, $U_A/CU_A$ is a connected quotient of $\T$, thus either trivial, or isomorphic to $\T$.
\end{proof}

\begin{theorem}
Let $A$ be a unital $C^*$-algebra with at most one tracial state which we then moreover require not to be a character. Then $A$ is simple if and only if $CU_A/\Ze(CU_A)$ is topologically simple and $\Ze(CU_A)\subseteq \T$.
\end{theorem}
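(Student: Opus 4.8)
The plan is to read both implications off the equivalences already established in Theorem~\ref{thm:simplicity} (especially $(1)\Leftrightarrow(4)$), feeding in the Cuntz--Pedersen dichotomy (Theorem~\ref{thm:CuntzPedersen}) to control $\overline{[A,A]}$ and the hypothesis on the trace to locate central unitaries inside $CU_A$. The one standing observation I would record first is that \emph{$A$ admits no character}: a character $A\to\C$ is a tracial state that is a character, so its existence would make the unique tracial state a character, against the hypothesis. In particular $A$ has no nonzero abelian quotient.

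For the forward direction, assume $A$ is simple. Then condition $(1)$ of Theorem~\ref{thm:simplicity} holds trivially (the only ideals are $\{0\}\subseteq\Ze(A)$ and $A$, with abelian quotient $0$), so by $(1)\Leftrightarrow(4)$ the group $CU_A/\Ze(CU_A)$ is topologically simple. For the inclusion $\Ze(CU_A)\subseteq\T$, take $g\in\Ze(CU_A)$. Since $\{e^{[a,b]}\mid a,b\in A_{{\rm sa}}\}$ generates $CU_A$ by Lemma~\ref{Rob3.1'} and each $e^{t[a,b]}=e^{[ta,b]}$ lies in $CU_A$, differentiating $g\,e^{t[a,b]}g^{-1}=e^{t[a,b]}$ at $t=0$ shows $g$ commutes with every $[a,b]$, hence with $\overline{[A,A]}$. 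Now apply Theorem~\ref{thm:CuntzPedersen}: if $A$ has no tracial state then $\overline{[A,A]}=A$, so $g\in\Ze(A)$; if $A$ has its unique tracial state $\tau$ then $\overline{[A,A]}=\ker\tau$ and $A=\ker\tau\oplus\C1$, so $g$ commutes with all of $A$ and again $g\in\Ze(A)$. By Lemma~\ref{lem: center is trivial}, $\Ze(A)=\C$, so $g\in\T$.

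For the converse, assume $CU_A/\Ze(CU_A)$ is topologically simple and $\Ze(CU_A)\subseteq\T$. By $(4)\Leftrightarrow(1)$, every closed two-sided ideal $I$ satisfies $I\subseteq\Ze(A)$ or $A/I$ abelian; for a proper $I$ the latter gives a nonzero abelian quotient, hence a character, which is excluded, so every proper ideal lies inside $\Ze(A)$. It thus suffices to prove $\Ze(A)=\C$, since then $I\subseteq\C1$ forces $I\in\{\{0\},A\}$ and $A$ is simple. Suppose instead $\Ze(A)\cong C(X)$ with $|X|\geq 2$; I would derive a contradiction by producing a non-scalar central unitary in $CU_A$. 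Pick a non-constant real $g\in C(X)\cong\Ze(A)$. If $A$ has no tracial state, then $CU_A=U_A$ by Lemma~\ref{lem UA=CUA condition}, so for small $t>0$ the central unitary $e^{\ri t g}$ lies in $CU_A$ and is non-scalar, contradicting $\Ze(CU_A)\subseteq\T$. If $A$ has its unique tracial state $\tau$, restrict $\tau$ to $\Ze(A)$ to get a probability measure $\mu$ on $X$ and set $h:=g-\mu(g)1$, which remains non-constant but now has $\tau(h)=0$, i.e. $h\in\ker\tau\cap A_{{\rm sa}}=\overline{[A,A]}\cap A_{{\rm sa}}$; then $e^{\ri t h}\in CU_A$ by Lemma~\ref{lem: expi[I,I]}, and for small $t>0$ it is a non-scalar central unitary, the desired contradiction. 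Hence $\Ze(A)=\C$ and $A$ is simple. The delicate step, and the only real obstacle, is this last construction: it is easy to find non-scalar central unitaries in $U_A$, but one must force them into $CU_A$, and it is precisely here that the single-trace hypothesis is essential, since it makes $\overline{[A,A]}=\ker\tau$ a hyperplane and lets one kill the trace of $h$ while keeping it non-constant.
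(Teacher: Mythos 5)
Your proof is correct, and it follows the same overall skeleton as the paper's (Theorem~\ref{thm:simplicity}, Cuntz--Pedersen, and the fact that self-adjoint elements of $\overline{[A,A]}$ exponentiate into $CU_A$), but two local arguments are genuinely different and worth comparing. For the forward inclusion $\Ze(CU_A)\subseteq\T$, the paper applies its structure theorem (Theorem~\ref{thm:setofnormalsubgrps}) to $H=\Ze(CU_A)$, placing it between $CU_I$ and $N_I$ for a perfect ideal $I$ and analyzing the two possible ideals of a simple algebra; you instead differentiate the commutation relation $g\,e^{t[a,b]}=e^{t[a,b]}g$ against the generators from Lemma~\ref{Rob3.1'} to see that $g$ commutes with $\overline{[A,A]}$, then use Cuntz--Pedersen ($A=\overline{[A,A]}$ or $A=\overline{[A,A]}+\C 1$) and Lemma~\ref{lem: center is trivial} to conclude $g\in\Ze(A)=\C$. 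Your version is more elementary and self-contained; the paper's displays the normal-subgroup machinery. For the converse, the paper runs a per-ideal case analysis ($I\subseteq\Ze(A)$ versus $A/I$ abelian, each split by existence of a trace), while you first record that the trace hypothesis forbids any character of $A$, so abelian quotients are ruled out once and for all and the whole problem reduces to $\Ze(A)=\C$; you then prove this by exhibiting, from a non-scalar central element, a non-scalar central unitary in $CU_A$ (killing the trace so as to land in $\ker\tau=\overline{[A,A]}$), contradicting $\Ze(CU_A)\subseteq\T$. This is exactly the paper's Case~1 computation run in contrapositive form -- the paper differentiates $\exp(\ri t a)\in\T$ to force $a=0$, you exponentiate $h\neq$ scalar to violate $\Ze(CU_A)\subseteq\T$ -- so the mechanism is identical, but your packaging is cleaner and makes the role of the ``not a character'' hypothesis more transparent. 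One cosmetic point: Lemma~\ref{lem: expi[I,I]} is stated for $x\in[A,A]\cap A_{\rm sa}$, not for the closure; since $[A,A]$ is $*$-invariant and $CU_A$ is closed, the extension to $\overline{[A,A]}\cap A_{\rm sa}$ is immediate by continuity of $\exp$ (and the paper uses it in the same way), but you should say this in one line.
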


\begin{proof}
Suppose first that $A$ is simple. By \cite[Corollary 3.8]{Rob19}, $CU_A/\Ze(CU_A)$ is topologically simple. Set $H:=\Ze(CU_A)$. It is a closed normal subgroup of $CU_A$ and by \cite[Theorem 3.7]{Rob19}, it is also normal in $U_A$. Therefore, by Theorem~\ref{thm:setofnormalsubgrps}, there exists a perfect ideal $I\subseteq A$ such that $CU_I\leq H\leq N_I$. If $I=\mathrm{Id}([A,A])$, then $H=CU_A$, implying that $CU_A$ is abelian and topologically perfect, whence trivial.  

So suppose that $I=\{0\}$. Then $H\leq N_I=\Ze(U_A)$. Since $A$ is simple, by Lemma~\ref{lem: center is trivial}, $\Ze(A)=\Com$, so $\Ze(U_A)=\T$ (cf. Lemma~\ref{lem: U_0(A) generates A}) and we have $\Ze(CU_A)\subseteq \T$.\medskip

Now we prove the converse. Let $I\subseteq A$ be a non-zero closed two-sided ideal. By Theorem~\ref{thm:simplicity}, either $I\subseteq \Ze(A)$ or $A/I$ is abelian.\medskip

\noindent{\bf Case 1.} $I\subseteq \Ze(A)$. 
Assume first that $A$ admits a unique tracial state $\tau$.
Pick a non-zero $b\in I_{{\rm sa}}$. By Theorem~\ref{thm:CuntzPedersen}, we can write $b=a+r1$, where $a\in\overline{[A,A]}$ and $r\in\Rea$. Since $b\in \Ze(A)$, also $a\in \Ze(A)$. Then by \cite[Lemma 3.1]{Rob19}, for every $t\in \R$, 
\[\exp(\ri ta)\in CU_A\cap \Ze(U_A)\subseteq \Ze(CU_A)\subseteq \T.\] 
By differentiating the left hand side at $t=0$, we obtain $a\in \R$, which implies $a\in \R\cap \overline{[A,A]}=\{0\}$. Therefore, for every $b\in I_{{\rm sa}}$ we have $b=r\in I\cap \C^{\times}$. Thus $I=A\subseteq \mathcal{Z}(A)$, so $A$ is an abelian $C^*$-algebra with a unique tracial state. Thus $A=\C$, which admits a character, a contradiction.

Next, assume that $A$ does not admit a tracial state. Then by Theorem~\ref{thm:CuntzPedersen} and Lemma~\ref{lem UA=CUA condition}, we have $A=\overline{[A,A]}$, $U_A=CU_A$, and $\mathcal{Z}(U_A)=\mathcal{Z}(CU_A)\subseteq \T$. Thus $\mathcal{Z}(U_A)=\T$. This implies that $\mathcal{Z}(A)=\C$. Since $A$ does not admit a tracial state, $A\neq \C$ and thus $I$ is a proper ideal of $A$ contained in $\C$, proving that $I=\{0\}$, which is a contradiction.
\medskip

\noindent{\bf Case 2.} $A/I$ is abelian. It follows that $\overline{[A,A]}\subseteq I\subseteq A$. Assume that $A$ admits a unique tracial state $\tau$.

Since again by Theorem~\ref{thm:CuntzPedersen}, $A=\overline{[A,A]}\oplus \Com$ and if $I\neq A$ we get that $I=\overline{[A,A]}$. 
Thus, it follows that $A/I=\Com$ and so $A$ admits a non-trivial character, a contradiction. Thus $I=A$. 
\medskip

Next, assume that $A$ does not have a tracial state. Then by Theorem~\ref{thm:CuntzPedersen}, $A=\overline{[A,A]}=I$.

This concludes that $A$ is simple.
\end{proof}

\subsection{Simplicity of reduced group $C^*$-algebras}
Let $G$ be a countable discrete group, $C_{r}^*(G)$ be the reduced group $C^*$-algebra of $G$. Denote by ${\rm Rep}(G)$ the class of all unitary representations of $G$, and ${\rm Rep}_{\lambda}(G)$ the subclass of all $\pi\in {\rm Rep}(G)$ with $\pi\prec \lambda$. Here, $\lambda$ is the left regular representation of $G$ and $\prec$ denotes the weak containment (we refer to \cite[Part II]{BdHV} for a background on unitary representations of groups, weak containment, and its relations with group $C^*$-algebras). For $\pi\in {\rm Rep}(G)$ acting on a Hilbert space $H$ and for any $x\in \C G$ (the group algebra of $G$), we define 
\[\pi(x)=\sum_{g\in G}a_g\pi(g),\,\,x=\sum_{g\in G}a_g\cdot g\,\,\,a_g\in \C.\]
We will be using the following characterization of the weak containment of representations. 
\begin{proposition}\label{prop weakcontainment}
    For $\pi_1,\pi_2\in {\rm Rep}(G)$, the following conditions are equivalent. 
    \begin{list}{}{}
    \item[(1)] $\pi_1\prec \pi_2$.
    \item[(2)] $\|\pi_1(x)\|\le \|\pi_2(x)\|$ for every $x\in \C G$.
    \item[(3)] There exists a unital $*$-homomorphism $\Phi\colon C_{\pi_2}^*(G)\to C_{\pi_1}(G)$ such that $\Phi(\pi_2(g))=\pi_1(g)$ for every $g\in G$. 
    \end{list}
\end{proposition}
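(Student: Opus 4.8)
The cleanest route is to treat $(2)\Leftrightarrow(3)$ as a purely $C^*$-algebraic matter and $(1)\Leftrightarrow(2)$ as Fell's matrix-coefficient criterion, verifying the two blocks separately. First I would dispose of $(2)\Leftrightarrow(3)$. For $(3)\Rightarrow(2)$, since $*$-homomorphisms between $C^*$-algebras are contractive and $\Phi(\pi_2(x))=\sum_g a_g\Phi(\pi_2(g))=\sum_g a_g\pi_1(g)=\pi_1(x)$ for $x=\sum_g a_g g\in\C G$, we get $\|\pi_1(x)\|=\|\Phi(\pi_2(x))\|\le\|\pi_2(x)\|$. For $(2)\Rightarrow(3)$, the inequality shows that $\pi_2(x)\mapsto\pi_1(x)$ is well defined (if $\pi_2(x)=\pi_2(y)$ then $\|\pi_1(x-y)\|\le\|\pi_2(x-y)\|=0$) and contractive on the dense $*$-subalgebra $\pi_2(\C G)\subseteq C^*_{\pi_2}(G)$; it is multiplicative and $*$-preserving there because $\pi_1,\pi_2$ are $*$-representations, so it extends by continuity to a unital $*$-homomorphism $\Phi\colon C^*_{\pi_2}(G)\to C^*_{\pi_1}(G)$ with $\Phi(\pi_2(g))=\pi_1(g)$.

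The substance is the equivalence with $(1)$, which I read through the diagonal matrix coefficients $\varphi_\xi(g)=\langle\pi(g)\xi,\xi\rangle$. For $(1)\Rightarrow(2)$, fix $x=\sum_g a_g g\in\C G$ and a unit vector $\xi$ in the space of $\pi_1$; starting from $\|\pi_1(x)\xi\|^2=\sum_{g,h}\overline{a_g}a_h\,\varphi_\xi(g^{-1}h)$, I would approximate $\varphi_\xi$ uniformly on the finite set $\{e\}\cup\{g^{-1}h:\ a_g,a_h\neq 0\}$ by a finite sum $\sum_i\varphi_{\eta_i}$ of diagonal coefficients of $\pi_2$. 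The crucial normalization is that evaluation at $e$ gives $\sum_i\|\eta_i\|^2=\sum_i\varphi_{\eta_i}(e)\to\varphi_\xi(e)=1$, so that
\[\|\pi_1(x)\xi\|^2=\lim\sum_i\|\pi_2(x)\eta_i\|^2\le\|\pi_2(x)\|^2\,\lim\sum_i\|\eta_i\|^2=\|\pi_2(x)\|^2;\]
taking the supremum over unit vectors $\xi$ yields $\|\pi_1(x)\|\le\|\pi_2(x)\|$.

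The reverse implication $(2)\Rightarrow(1)$ is where the work lies. Using the homomorphism $\Phi$ from $(3)$, which is automatically onto $C^*_{\pi_1}(G)$ (its image is a $C^*$-subalgebra containing the generators $\pi_1(G)$), I would pull back vector states: a unit vector $\xi$ of $\pi_1$ determines a state $\omega_\xi$ on $C^*_{\pi_1}(G)$, and $\psi:=\omega_\xi\circ\Phi$ is a state on $C^*_{\pi_2}(G)$ with $\psi(\pi_2(g))=\varphi_\xi(g)$. The key point is that the GNS representation of $\psi$ factors through $\Phi$, so it is weakly contained in the faithful defining representation $\pi_2$ of $C^*_{\pi_2}(G)$; hence $\psi$ lies in the weak-$*$ closed convex hull of the vector states of $\pi_2$, i.e. $\psi$ is a weak-$*$ limit of states of the form $b\mapsto\sum_i\langle b\eta_i,\eta_i\rangle$. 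Evaluating on the $\pi_2(g)$ and using that weak-$*$ convergence gives simultaneous pointwise convergence on any finite subset of $G$, we conclude that $\varphi_\xi$ is approximated uniformly on finite sets by $\sum_i\varphi_{\eta_i}$, which is exactly $\pi_1\prec\pi_2$. The main obstacle is precisely this last density statement, that every state of $C^*_{\pi_2}(G)$ is a weak-$*$ limit of finite sums of vector states of $\pi_2$; it is the $C^*$-algebraic form of Fell's theorem and rests on the fact that every representation of a $C^*$-algebra is weakly contained in any faithful one.
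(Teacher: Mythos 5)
Your proof is correct. Note, however, that the paper does not prove this proposition at all: it is quoted as a known result with a pointer to \cite[Theorem F.4.4]{BdHV}, so there is no ``paper proof'' to compare against; your argument is essentially a reconstruction of the standard one found in that reference. The decomposition is the right one: $(2)\Leftrightarrow(3)$ is soft $C^*$-algebra yoga (contractivity of $*$-homomorphisms one way, extension of a contractive $*$-homomorphism from the dense subalgebra $\pi_2(\C G)$ the other way), $(1)\Rightarrow(2)$ is the matrix-coefficient estimate with the normalization $\sum_i\|\eta_i\|^2\to\varphi_\xi(e)=1$ obtained by including $e$ in the finite set, and $(2)\Rightarrow(1)$ pulls back vector states along $\Phi$ and invokes the density, in the weak-$*$ topology, of convex combinations of vector states of a faithful representation in the full state space. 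That last density statement is the only ingredient you use without proof; for completeness one should say it follows from Hahn--Banach separation together with the observation that if $b=b^*\in C^*_{\pi_2}(G)\subseteq B(H_{\pi_2})$ satisfies $\langle b\eta,\eta\rangle\le c$ for all unit vectors $\eta$, then $b\le c1$ in the algebra (positivity in a $C^*$-subalgebra of $B(H)$ is inherited from $B(H)$), hence every state is dominated by $c$ on $b$ as well. With that remark added, the argument is complete and self-contained, modulo only the definition of weak containment via uniform approximation of functions of positive type on finite subsets of $G$.
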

See \cite[Theorem F.4.4]{BdHV} for a proof. The next result is well-known. We include the proof for completeness. 
\begin{proposition}\label{prop:amenablecharacterization}
Let $G$ be a countable discrete group. The following conditions are equivalent. 
\begin{list}{}{}
\item[(1)] 
There exists a unital surjective $*$-homomorphism from $C_r^*(G)$ onto a unital abelian $C^*$-algebra.
\item[(2)] There exists a unital surjective $*$-homomorphism from $C_r^*(G)$ onto $\C$. 
\item[(3)] $G$ is amenable. 
\end{list}
\end{proposition}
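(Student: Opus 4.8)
The plan is to deal with the equivalence $(1)\Leftrightarrow(2)$ essentially for free and then to concentrate on $(2)\Leftrightarrow(3)$, which carries all the content. Since $\C$ is itself a unital abelian $C^*$-algebra, $(2)\Rightarrow(1)$ is immediate. For $(1)\Rightarrow(2)$ I would take a unital surjection $\Psi\colon C_r^*(G)\to B$ onto a unital abelian $B$; as $B$ is unital and nonzero its Gelfand spectrum is nonempty, so there is a character $\omega\colon B\to\C$, and then $\omega\circ\Psi\colon C_r^*(G)\to\C$ is a unital $*$-homomorphism, necessarily surjective onto $\C$. Thus it remains only to prove $(2)\Leftrightarrow(3)$.

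For $(3)\Rightarrow(2)$ I would invoke the standard characterization of amenability (Hulanicki's theorem; see \cite{BdHV}), which says that $G$ is amenable precisely when the trivial one-dimensional representation $1_G$ satisfies $1_G\prec\lambda$. Feeding $1_G\prec\lambda$ into Proposition~\ref{prop weakcontainment}(3) produces a unital $*$-homomorphism $\Phi\colon C_r^*(G)=C_\lambda^*(G)\to C_{1_G}^*(G)=\C$ determined by $\Phi(\lambda(g))=1_G(g)=1$ for all $g\in G$; this $\Phi$ is the trivial character, which is visibly surjective onto $\C$.

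For $(2)\Rightarrow(3)$ I would start from a unital $*$-homomorphism $\tau\colon C_r^*(G)\to\C$ and set $\chi(g):=\tau(\lambda(g))$. Because $\tau$ is multiplicative and $*$-preserving and each $\lambda(g)$ is unitary, $\chi$ is a group homomorphism $G\to\T$, i.e. a one-dimensional unitary representation; moreover the boundedness of $\tau$ on $C_r^*(G)$ gives $\|\chi(x)\|\le\|\lambda(x)\|$ for all $x\in\C G$, so $\chi\prec\lambda$ by Proposition~\ref{prop weakcontainment}. The decisive step is then to promote this to $1_G\prec\lambda$: using that weak containment is stable under tensoring by a fixed representation I would deduce $\chi\otimes\overline{\chi}\prec\lambda\otimes\overline{\chi}$, and here $\chi\otimes\overline{\chi}=1_G$ (as $\chi$ is $\T$-valued) while Fell's absorption principle gives $\lambda\otimes\overline{\chi}\cong\lambda$. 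Hence $1_G\prec\lambda$, and Hulanicki's theorem yields the amenability of $G$.

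The formal bookkeeping, namely the two implications involving the abelian algebra and the direct applications of Proposition~\ref{prop weakcontainment}, is routine. The one genuinely non-trivial point I expect to be the main obstacle is precisely the passage from an arbitrary one-dimensional subrepresentation of $\lambda$ to the trivial representation in $(2)\Rightarrow(3)$, which is where Fell's absorption principle, together with the stability of weak containment under tensor products, does the work; equivalently, one can bypass explicit tensor calculus by noting that $\chi$ induces (via the diagonal unitary $\xi\mapsto\chi(\cdot)\xi$ on $\ell^2(G)$) an inner $*$-automorphism of $C_r^*(G)$ sending $\lambda(g)\mapsto\chi(g)\lambda(g)$, whose composition with $\tau$ is the canonical trivial character.
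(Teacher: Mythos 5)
Your proof is correct, and in the one implication that carries real content, $(2)\Rightarrow(3)$, it takes a genuinely different route from the paper. The paper passes to the subgroup $N=\ker\chi$: there the character becomes trivial, and since $\lambda_G|_N$ is a multiple of $\lambda_N$ one gets $\|\iota_N(x)\|=\|\chi(x)\|\le\|\lambda_G(x)\|=\|\lambda_N(x)\|$ for $x\in\C N$, hence $\iota_N\prec\lambda_N$ and $N$ is amenable; amenability of $G$ then follows because $G/N$ embeds in $\T$ (abelian, hence amenable) and amenability is closed under extensions. You instead stay on $G$ and promote $\chi\prec\lambda$ to $1_G\prec\lambda$ directly, by tensoring with $\overline{\chi}$ and invoking Fell's absorption principle $\lambda\otimes\overline{\chi}\cong\lambda$ together with stability of weak containment under tensoring by a fixed representation. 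Both arguments are sound; yours trades the paper's group-theoretic permanence properties of amenability for representation-theoretic machinery (Fell absorption), and is arguably more direct since it never leaves the ambient group. The handling of $(1)\Leftrightarrow(2)$ and $(3)\Rightarrow(2)$ coincides with the paper's. One terminological slip in your closing remark: the automorphism of $C_r^*(G)$ sending $\lambda(g)\mapsto\chi(g)\lambda(g)$ is \emph{not} inner --- it is spatially implemented by the diagonal unitary $U_\chi$ on $\ell^2(G)$, but $U_\chi$ does not in general belong to $C_r^*(G)$, so this gauge automorphism is typically outer. This does not affect the argument, since only the existence of the automorphism (not its innerness) is used.
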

\begin{proof}
(3)$\implies$(2) Assume that $G$ is amenable and let $\iota_G$ be the trivial representation of $G$. Then $\iota_G\prec \lambda_G$, whence there exists a unital surjective $*$-homomorphism from $C_r^*(G)$ onto $\C=C^*_{\iota_G}(G)$.
\\

(2)$\implies$(3) Assume that there exists a unital surjective $*$-homomorphism $\Phi\colon C_r^*(G)\to \C$. Then $\pi(g)=\Phi(\lambda(g)),\,\,(g\in G)$ defines a unitary representation $\colon G\to \T$. Let $N=\ker (\pi)$. 
Then the restriction $\lambda_G|_N$ of the left regular representation $\lambda_G$ of $G$ to the subgroup $N$ is unitarily equivalent to a multiple of $\lambda_N$ and $\pi|_N$ is unitarily equivalent to a multiple of $\iota_N$. Thus for every $x\in \C N$, we have 
\[\|\iota_N(x)\|=\|\pi(x)\|\le \|\lambda_G(x)\|=\|\lambda_N(x)\|,\]
which shows that $\iota_N\prec \lambda_N$, whence $N$ is amenable. 
Since subgroups of $\T$ are abelian hence amenable, and amenability passes to extensions, it follows that $G$ is also amenable.\\

(2)$\implies$(1) is trivial.\\
(1)$\implies$(2) Unital abelian $C^*$-algebras have pure states, and they are nonzero $*$-homomorphisms onto $\C$. 
\end{proof}

Recall that an infinite group is called \emph{ICC} (infinite conjugacy classes) if all the non-trivial conjugacy classes are infinite. Recall also that every locally compact group contains a largest normal amenable subgroup called the \emph{amenable radical}. It is known for a discrete group $G$ that if $C^*_r(G)$ is simple, then the amenable radical of $G$ is trivial (see \cite[Proposition 3]{dlH} and that if $G$ has trivial amenable radical (which by Breuillard--Kalantar--Kennedy--Ozawa theorem from \cite{BKKO} is equivalent to $C_r^*(G)$ having the unique tracial state), then $G$ is ICC (see e.g., \cite[Theorem 4.3]{Hag17}). Finally, the ICC property of $G$ is equivalent to $C_r^*(G)$ being prime (see e.g., \cite[Proposition 2.3]{MR1989500Murphy2003}). 
We summarize the above results.
\begin{theorem}[\cite{BKKO,Hag17,dlH,MR1989500Murphy2003}]\label{thm Cstar simple implication}
Let $G$ be a countable discrete group. Consider the following conditions. 
\begin{list}{}{}
\item[(1)] $C_r^*(G)$ is simple.
\item[(2)] $C_r^*(G)$ has unique tracial state.
\item[(3)] $G$ has trivial amenable radical.
\item[(4)] $G$ is {\rm ICC}.
\item[(5)] $C_r^*(G)$ is prime. 
\end{list}
Then (1)$\implies$(2)$\iff$(3)$\implies$(4)$\iff$(5).
\end{theorem}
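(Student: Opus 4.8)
The plan is to verify the displayed chain link by link, since each arrow is an instance of a result already available in the literature and the content of the statement is really their assembly. Concretely, I would establish $(1)\Rightarrow(3)$, $(3)\Leftrightarrow(2)$, $(3)\Rightarrow(4)$, and $(4)\Leftrightarrow(5)$; composing $(1)\Rightarrow(3)$ with $(3)\Rightarrow(2)$ then yields the listed arrow $(1)\Rightarrow(2)$, and the four links together give the whole diagram.

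For $(1)\Rightarrow(3)$ I would argue by contraposition. If the amenable radical $R$ of $G$ is nontrivial, then $R$ is a nontrivial amenable normal subgroup; amenability gives $\iota_R\prec\lambda_R$, and inducing up to $G$ yields $\lambda_{G/R}\prec\lambda_G$. By Proposition~\ref{prop weakcontainment} this produces a surjective unital $*$-homomorphism $C_r^*(G)\to C_r^*(G/R)$ sending $\lambda(g)$ to $\lambda_{G/R}(gR)$, whose kernel contains $\lambda(r)-1$ for every $r\in R\setminus\{e\}$ and is therefore a proper nonzero closed two-sided ideal, contradicting simplicity. This is exactly \cite[Proposition 3]{dlH}. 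For $(3)\Rightarrow(4)$ I would again contrapose: if $G$ is not ICC it has a nontrivial element with finite conjugacy class, so the FC-center $\mathrm{FC}(G)=\{g\in G:\ |\{hgh^{-1}:h\in G\}|<\infty\}$ is a nontrivial characteristic, hence normal, subgroup; since FC-groups are locally virtually abelian and thus amenable, $\mathrm{FC}(G)$ witnesses a nontrivial amenable radical. The clean reference is \cite[Theorem 4.3]{Hag17}.

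The two equivalences carry the real weight. The equivalence $(2)\Leftrightarrow(3)$ is the Breuillard--Kalantar--Kennedy--Ozawa theorem \cite{BKKO}: the direction $(2)\Rightarrow(3)$ is elementary, since a nontrivial amenable radical supplies additional tracial states, whereas $(3)\Rightarrow(2)$ is the deep assertion that trivial amenable radical forces the canonical trace on $C_r^*(G)$ to be the unique one, established through the action of $G$ on its Furstenberg boundary. The equivalence $(4)\Leftrightarrow(5)$ is \cite[Proposition 2.3]{MR1989500Murphy2003}: one shows that $C_r^*(G)$ is prime---no two nonzero closed two-sided ideals intersect trivially---precisely when every nontrivial conjugacy class is infinite, by tracking how $\lambda$ acts on matrix coefficients supported on finite conjugacy classes and relating a nontrivial finite class to a pair of commuting nonzero ideals.

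The main obstacle is $(3)\Rightarrow(2)$, i.e. the substantive half of \cite{BKKO}: the remaining links are soft group-theoretic or induction-and-weak-containment arguments, whereas this one genuinely requires the topological dynamics of the boundary action. Since every link is supplied by the cited references, the honest execution is to invoke them, sketching only the elementary directions as above and deferring the Breuillard--Kalantar--Kennedy--Ozawa equivalence to its source.
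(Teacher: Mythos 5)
Your proposal is correct and follows essentially the same route as the paper: the theorem is stated there as a summary of known results, with exactly the decomposition you use --- $(1)\Rightarrow(3)$ from \cite[Proposition 3]{dlH}, $(2)\Leftrightarrow(3)$ from \cite{BKKO}, $(3)\Rightarrow(4)$ from \cite[Theorem 4.3]{Hag17}, and $(4)\Leftrightarrow(5)$ from \cite[Proposition 2.3]{MR1989500Murphy2003} --- and no further proof is given. Your sketches of the elementary links (the quotient $*$-homomorphism onto $C_r^*(G/R)$ killing $\lambda(r)-1$, and the FC-center as a nontrivial amenable normal subgroup) are accurate and only add detail beyond what the paper records.
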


\begin{proposition}\label{prop:ICCgroupalgebras}
Let $G$ be a countable discrete ICC group. Then for $A:=C^*_r(G)$ we have that all proper quotients of $A$ are abelian if and only if $CU_A/\Ze(CU_A)$ is topologically simple. Moreover, in that case $\Ze(CU_A)\subseteq \Ze(U_A)=\T$.
\end{proposition}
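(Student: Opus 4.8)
The plan is to prove Proposition~\ref{prop:ICCgroupalgebras} by reducing it to the already-established Theorem~\ref{thm:simplicity}, specifically the equivalence of conditions (1) and (4) there, and then using the structural facts about $C^*_r(G)$ for ICC groups collected in Theorem~\ref{thm Cstar simple implication}. The key observation is that the hypothesis ``all proper quotients of $A$ are abelian'' should be connected to condition (1) of Theorem~\ref{thm:simplicity}, namely that for every closed two-sided ideal $I$, either $I\subseteq\Ze(A)$ or $A/I$ is abelian. So the first step would be to show that for $A=C^*_r(G)$ with $G$ ICC, the center $\Ze(A)$ is trivial, i.e. $\Ze(A)=\C$. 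This is where the ICC/primeness enters: by Theorem~\ref{thm Cstar simple implication}, ICC is equivalent to $A$ being prime, and a prime $C^*$-algebra has trivial center (a central projection would split $A$ as a direct sum, contradicting primeness; more directly, any nonzero central element generates a central ideal incompatible with primeness).

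Once I know $\Ze(A)=\C$, the hypothesis that all \emph{proper} quotients are abelian becomes exactly condition (1) of Theorem~\ref{thm:simplicity}: for a nonzero proper ideal $I$, we cannot have $I\subseteq\Ze(A)=\C$ unless $I=\{0\}$, since a nonzero ideal contained in $\C\cdot 1$ would force $1\in I$ and hence $I=A$; so every nonzero proper ideal $I$ must satisfy that $A/I$ is abelian, which is precisely the statement that all proper quotients are abelian. Thus I would argue that under the ICC assumption, ``all proper quotients of $A$ are abelian'' is equivalent to condition (1) of Theorem~\ref{thm:simplicity}. Theorem~\ref{thm:simplicity} then directly gives the equivalence of (1) with condition (4), namely that $CU_A/\Ze(CU_A)$ is topologically simple, which is exactly the assertion of the proposition. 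I would need to be slightly careful about the edge case $A=\C$ (when $G$ is trivial), but ICC groups are infinite, so $A$ is infinite-dimensional and noncommutative, and these degenerate cases do not arise.

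For the ``moreover'' part, I would establish $\Ze(U_A)=\T$ and $\Ze(CU_A)\subseteq\Ze(U_A)$. Since $\Ze(A)=\C$, Lemma~\ref{lem: U_0(A) generates A} (which says $U_A$ spans $A$) shows that a central unitary in $U_A$ must be a scalar, giving $\Ze(U_A)=U_A\cap\Ze(A)=\T$. The inclusion $\Ze(CU_A)\subseteq\Ze(U_A)$ is where I expect to invoke the structural results already in the paper: one expects $\Ze(CU_A)=CU_A\cap\Ze(U_A)$ when $CU_A$ is suitably large, but in general I would argue via Theorem~\ref{thm:setofnormalsubgrps}, placing the closed normal subgroup $\Ze(CU_A)$ (normal in $U_A$ by \cite[Theorem 3.7]{Rob19}) between $CU_I$ and $N_I$ for a perfect ideal $I$; the topological perfectness forces $I=\{0\}$, so $\Ze(CU_A)\subseteq N_{\{0\}}=\Ze(U_A)$, using Lemma~\ref{lem:imageofNI}.

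The main obstacle I anticipate is making the reduction to condition (1) of Theorem~\ref{thm:simplicity} fully rigorous at the level of the center: I must be certain that $\Ze(A)=\C$ for ICC $G$, and that this genuinely collapses the dichotomy ``$I\subseteq\Ze(A)$ or $A/I$ abelian'' into the cleaner statement about proper quotients. The primeness-implies-trivial-center step is the crux, and while it is standard, I would want to cite it cleanly (it follows from $C^*_r(G)$ being prime via Theorem~\ref{thm Cstar simple implication}) rather than reprove it. The rest is essentially bookkeeping with the already-proven theorems, so I do not expect the ``moreover'' clause to cause real difficulty once $\Ze(A)=\C$ is in hand.
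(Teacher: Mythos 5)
Your proposal is correct, and it takes a partially different route from the paper's. The backward direction coincides with the paper's: invoke Theorem~\ref{thm:simplicity} and rule out the alternative $I\subseteq\Ze(A)$ using $\Ze(A)=\C$, which the paper isolates as Lemma~\ref{lem:ICC} and proves exactly as you indicate, via primeness from Theorem~\ref{thm Cstar simple implication}. The difference is in the forward direction: the paper does not feed the hypothesis back into Theorem~\ref{thm:simplicity}; instead it redoes the normal-subgroup analysis by hand --- lifting a closed normal subgroup $H'$ of $CU_A/\Ze(CU_A)$ to $\Ze(CU_A)\leq H\leq CU_A$, which is normal in $U_A$ by \cite[Theorem 3.7]{Rob19}, sandwiching $CU_I\leq H\leq N_I$ for a perfect ideal $I$ via Theorem~\ref{thm:setofnormalsubgrps}, and then arguing that $I\neq\{0\}$ gives $\overline{[A,A]}\subseteq I$, hence $CU_I=CU_A$ and $H=CU_A$, while $I=\{0\}$ gives $H\subseteq N_{\{0\}}=\Ze(U_A)=\T$, hence $H=\Ze(CU_A)$; the ``moreover'' clause is then read off from the case $H'=\{1\}$. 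Your reduction --- observing that once $\Ze(A)=\C$ and $A\neq\C$ (ICC groups are infinite), the hypothesis ``all proper quotients are abelian'' is verbatim condition (1) of Theorem~\ref{thm:simplicity}, so both directions follow at once from the equivalence $(1)\Leftrightarrow(4)$ --- is cleaner and avoids duplicating an argument already carried out in the proof of that theorem. Your standalone argument for the ``moreover'' clause (sandwich $\Ze(CU_A)$ between $CU_I$ and $N_I$; note $CU_I$ is abelian and topologically perfect, hence trivial; conclude $I=\{0\}$ by Lemma~\ref{lem:CU_Iinjective} and $N_{\{0\}}=\Ze(U_A)$ by Lemma~\ref{lem:imageofNI}) is also valid and in fact proves more than asked: it gives $\Ze(CU_A)\subseteq\Ze(U_A)=\T$ for every countable discrete ICC group, with no hypothesis on quotients, whereas the paper formally obtains it only ``in that case''. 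What the paper's longer route buys is an explicit picture of where each closed normal subgroup sits; what yours buys is economy and a marginally stronger ``moreover''.
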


We will need the following well-known lemma. 
\begin{lemma}\label{lem:ICC}
     Let $G$ be a countable discrete ICC group, $A=C^*_r(G)$ be the reduced group $C^*$-algebra of $G$. Then $\mathcal{Z}(A)=\C$.      
\end{lemma}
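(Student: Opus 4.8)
The plan is to work inside $B(\ell^2(G))$ with the canonical faithful tracial state $\tau$ on $A=C^*_r(G)$, given by $\tau(x)=\langle x\delta_e,\delta_e\rangle$, where $(\delta_g)_{g\in G}$ is the standard orthonormal basis of $\ell^2(G)$, and to analyze the \emph{Fourier coefficients} $\hat{x}(g):=\tau(\lambda_{g^{-1}}x)=\langle x\delta_e,\delta_g\rangle$ of an element $x$. The key idea is that centrality of $x$ forces $\hat{x}$ to be constant along conjugacy classes, whereas membership of $x$ in $A\subseteq B(\ell^2(G))$ forces the family $(\hat{x}(g))_{g}$ to be square-summable; under the ICC hypothesis these two constraints can only be reconciled by having $\hat{x}$ supported on the singleton class $\{e\}$.

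First I would record the square-summability: for any $x\in A$ the vector $x\delta_e$ lies in $\ell^2(G)$, so
\[
\sum_{g\in G}|\hat{x}(g)|^2=\|x\delta_e\|^2<\infty.
\]
Next, assuming $x\in\mathcal{Z}(A)$, I would use $\lambda_h x\lambda_{h^{-1}}=x$ for every $h\in G$ together with the trace property $\tau(ab)=\tau(ba)$ and $\lambda_h^*=\lambda_{h^{-1}}$ to compute
\[
\hat{x}(g)=\tau(\lambda_{g^{-1}}x)=\tau(\lambda_{g^{-1}}\lambda_h x\lambda_{h^{-1}})=\tau(\lambda_{h^{-1}g^{-1}h}x)=\hat{x}(h^{-1}gh),
\]
so that $\hat{x}$ is indeed constant on each conjugacy class of $G$. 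Invoking ICC, every non-trivial conjugacy class is infinite, and on such a class $\hat{x}$ takes a single constant value whose square is summed infinitely often; by the displayed square-summability this constant must vanish. Hence $\hat{x}(g)=0$ for all $g\neq e$, so $x\delta_e=\hat{x}(e)\delta_e$. Finally I would conclude by faithfulness of $\tau$ (equivalently, injectivity of the GNS map $y\mapsto y\delta_e$ on $A$): the element $x-\hat{x}(e)1$ annihilates $\delta_e$ and is therefore $0$, giving $x=\hat{x}(e)1\in\C 1$. This yields $\mathcal{Z}(A)\subseteq\C 1$, and the reverse inclusion is trivial.

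This argument is essentially routine; the only points requiring a little care are making sure the Fourier-coefficient formalism is legitimate for a \emph{general} $x\in C^*_r(G)$ (which is why I work with the single vector $x\delta_e\in\ell^2(G)$ rather than attempting a norm-convergent expansion $x=\sum_g \hat{x}(g)\lambda_g$) and the explicit use of faithfulness of the canonical trace to pass from vanishing Fourier coefficients back to $x$ itself. Neither presents a genuine obstacle.
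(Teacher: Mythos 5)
Your proof is correct, but it is not the route the paper takes. The paper's own proof is a two-line citation argument: by its Theorem on reduced group $C^*$-algebras, the ICC property of $G$ is equivalent to $C^*_r(G)$ being \emph{prime}, and any unital prime $C^*$-algebra has trivial center (citing Ara--Mathieu). Your argument is instead the direct, self-contained one: expand $x\delta_e$ in the orthonormal basis $(\delta_g)$, observe that centrality plus the trace property forces the Fourier coefficients $\hat{x}(g)=\langle x\delta_e,\delta_g\rangle$ to be constant on conjugacy classes, use ICC plus square-summability to kill all coefficients off the identity, and conclude via faithfulness of the canonical trace (equivalently, $\delta_e$ being separating for $A$ acting on $\ell^2(G)$, which follows from commutation with the right regular representation). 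Interestingly, the paper explicitly acknowledges your approach in passing --- ``Alternatively, a proof using the 2-norm expansion of $z\in \mathcal{Z}(C_r^*(G))$ inside the group von Neumann algebra $L(G)$ is possible as well'' --- but does not carry it out; your write-up is essentially that sketch made precise at the $C^*$-level. The trade-off: the paper's proof is shorter but leans on two nontrivial external results (ICC $\Leftrightarrow$ prime, and prime $\Rightarrow$ trivial center), whereas yours needs only elementary Hilbert-space facts and the standard faithfulness of the canonical trace, making it fully self-contained and arguably more illuminating about where ICC actually enters.
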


\begin{proof} By Theorem \ref{thm Cstar simple implication}, $C_r^*(G)$ is prime, and any unital prime C$^*$-algebra has trivial center (see e.g., \cite[Lemma 1.2.47]{MR1940428AraMathieu}
). Alternatively, a proof using the 2-norm expansion of $z\in \mathcal{Z}(C_r^*(G))$ inside the group von Neumann algebra $L(G)$ is possible as well. \end{proof}

\begin{proof}[Proof of Proposition \ref{prop:ICCgroupalgebras}]
First, notice that by Lemma~\ref{lem:ICC}, $\Ze(A)=\Com$ which is equivalent with $\Ze(U_A)=\T$.

Suppose first that all proper quotients of $A$ are abelian. Let $H'\le CU_A/\Ze(CU_A)$ be a closed normal subgroup and, by the fourth isomorphism theorem, let $\Ze(CU_A)\leq H\le CU_A$ be a closed normal subgroup such that $H'=H/\Ze(CU_A)$. By \cite[Theorem 3.7]{Rob19}, $H$ is also normal in $U_A$. Therefore, by Theorem~\ref{thm:setofnormalsubgrps} there exists a perfect ideal $I\subseteq A$ such that $CU_I\leq H\leq N_I$. 

Suppose that $I\neq\{0\}$. Because $A/I$ is abelian, $\overline{[A,A]}\subseteq I$ holds. It follows, by \cite[Lemma 3.1]{Rob19}, that $CU_A\leq U_I$ and thus, since by Proposition~\ref{prop:perfectnormalsubgroups}, $CU_I$ is the largest closed perfect normal subgroup contained in $U_I$ , that $CU_A=CU_I$. So since $CU_I\leq H\leq CU_A$ we get $H=CU_A$. 

Suppose that $I=\{0\}$. Then $\Ze(CU_A)\leq H\leq N_I=\Ze(U_A)=\T$. Since $\Ze(U_A)\cap CU_A\subseteq \Ze(CU_A)$ we get $H= \Ze(CU_A)$ and so $H'$ is trivial.

Therefore, $CU_A/\Ze(CU_A)$ is topologically simple.
\medskip

Now we conversely assume that $CU_A/\Ze(CU_A)$ is topologically simple. By Theorem~\ref{thm:simplicity} we get that for every nonzero proper ideal $I\subseteq A$ we have that either $I\subseteq \Ze(A)$ or $A/I$ is abelian. However, the former is not possible since $\Ze(A)=\Com$ by Lemma~\ref{lem:ICC}.\medskip

Notice that by the first part of the proof, with $H'$ being the trivial subgroup, we get that $\Ze(CU_A)\subseteq\Ze(U_A)=\T$. 

\end{proof}

\begin{lemma}\label{lem:simplegrpalgebra}
Let $G$ be a countable discrete group and suppose that $C_r^*(G)$ is simple. Then  $U_A/\T=CU_A/\Ze(CU_A)$.
\end{lemma}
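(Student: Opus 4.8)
The plan is to exhibit the stated equality as an instance of the second isomorphism theorem applied to the closed subgroup $CU_A\leq U_A$ and the central circle $\T$, so the two algebraic points to establish are the surjectivity $U_A=CU_A\cdot\T$ and the identification of kernels $CU_A\cap\T=\Ze(CU_A)$, after which I will upgrade the resulting algebraic isomorphism to a topological one using compactness of $\T$. First I would record the standing facts. Since $G$ is countable, $A=C^*_r(G)$ is separable; since $A$ is simple, Theorem~\ref{thm Cstar simple implication} gives that $A$ carries a unique tracial state, while Lemma~\ref{lem: center is trivial} gives $\Ze(A)=\C$. Because $U_A$ spans $A$ by Lemma~\ref{lem: U_0(A) generates A}, a unitary lies in $\Ze(U_A)$ iff it commutes with all of $A$, so $\Ze(U_A)=\Ze(A)\cap U_A=\T$; in particular $\T$ is central in $U_A$.

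For surjectivity I would invoke the computation already carried out in Lemma~\ref{lem UA=CUA condition}. By Theorem~\ref{thm:CuntzPedersen} the unique trace yields the decomposition $A_{\rm sa}=\overline{[A,A]}_{\rm sa}+\R1$, so each generator $e^{\ri a}$ of $U_A$ splits as $e^{\ri a}=e^{\ri b}e^{\ri r}$ with $b\in\overline{[A,A]}_{\rm sa}$ and $r\in\R$, where $e^{\ri b}\in CU_A$ by \cite[Lemma~3.1]{Rob19} and $e^{\ri r}\in\T$. Multiplying out a finite product and pulling the central scalars to one side, every $u\in U_A$ is of the form (element of $CU_A$)$\cdot$(element of $\T$); that is, $U_A=CU_A\cdot\T$.

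Next I would identify the kernels. The inclusion $CU_A\cap\T\subseteq\Ze(CU_A)$ is immediate since $\T=\Ze(U_A)$ centralises $CU_A$. For the reverse inclusion I would use that $\Ze(CU_A)$ is a closed normal subgroup of $CU_A$, hence, as $CU_A\trianglelefteq U_A$, a closed normal subgroup of $U_A$ by \cite[Theorem~3.7]{Rob19}. Theorem~\ref{thm:setofnormalsubgrps} then provides a perfect ideal $I$ with $CU_I\leq\Ze(CU_A)\leq N_I$. Here $CU_I$ inherits commutativity from the abelian group $\Ze(CU_A)$, yet $CU_I$ is topologically perfect by \cite[Theorem~3.6]{Rob19}, forcing $CU_I=\{1\}=CU_{\{0\}}$; by the injectivity of $I\mapsto CU_I$ on perfect ideals (Lemma~\ref{lem:CU_Iinjective}) this gives $I=\{0\}$, whence $\Ze(CU_A)\leq N_{\{0\}}=\Ze(U_A)=\T$ by Lemma~\ref{lem:imageofNI}. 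Thus $CU_A\cap\T=\Ze(CU_A)$.

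Finally I would assemble these. The restriction to $CU_A$ of the quotient map $q\colon U_A\to U_A/\T$ is a continuous homomorphism, surjective by $U_A=CU_A\cdot\T$ and with kernel $CU_A\cap\T=\Ze(CU_A)$, so it descends to a continuous group isomorphism $CU_A/\Ze(CU_A)\to U_A/\T$. The step requiring care---and the only genuinely topological point---is that this bijection is a homeomorphism. Here I would use that $\T$ is compact: for any closed subset $F\subseteq CU_A$, which is also closed in $U_A$, the product $F\cdot\T$ is closed in $U_A$, so $q|_{CU_A}$ is a closed map, hence a quotient map, and the induced continuous bijection $CU_A/\Ze(CU_A)\to U_A/\T$ is therefore closed and thus a topological isomorphism. (Alternatively, since $A$ is separable all groups in sight are Polish and one may instead quote the open mapping theorem for Polish groups.) I expect the main obstacle to be precisely this last topological identification, together with the inclusion $\Ze(CU_A)\subseteq\T$; the remaining content is a direct application of the machinery already developed.
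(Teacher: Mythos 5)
Your proof is correct, and its algebraic core coincides with the paper's: both rest on the decomposition $U_A=CU_A\cdot\T$, obtained from the unique tracial state via Theorem~\ref{thm:CuntzPedersen} and \cite[Lemma 3.1]{Rob19}, and on the identification $\Ze(CU_A)=CU_A\cap\T$. Where you genuinely diverge is in how the isomorphism is built and topologized. The paper constructs an explicit map $T\colon U_A/\T\to CU_A/\Ze(CU_A)$ sending the class of $\prod_k e^{\ri(a_k+r_k)}$ to the class of $\prod_k e^{\ri a_k}$; this forces a well-definedness check (the map is defined via a choice of factorization), separate injectivity, surjectivity and continuity verifications, and finally automatic continuity of the Borel inverse between \emph{Polish} groups (\cite[Theorem 9.10]{Kechrisbook}), so separability of $A$ enters essentially. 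You instead restrict the quotient map $U_A\to U_A/\T$ to $CU_A$ and invoke the second isomorphism theorem, which makes well-definedness automatic, and you upgrade the continuous bijection to a homeomorphism by the elementary fact that $F\cdot\T$ is closed whenever $F$ is closed and $\T$ is compact, so the restriction is a closed map. This is cleaner and slightly more general (no separability needed; your Polish open-mapping alternative is the variant closest to the paper's argument). A second, smaller difference: you derive $\Ze(CU_A)\subseteq\T$ directly from Theorem~\ref{thm:setofnormalsubgrps} — abelian plus topologically perfect forces $CU_I=\{1\}$, Lemma~\ref{lem:CU_Iinjective} gives $I=\{0\}$, and then $\Ze(CU_A)\leq N_{\{0\}}=\Ze(U_A)=\T$ — whereas the paper routes this through ICC (Theorem~\ref{thm Cstar simple implication}) and Proposition~\ref{prop:ICCgroupalgebras}; since that proposition's proof runs the same mechanism internally, this is a repackaging rather than new content, but it does shorten the chain of dependencies.
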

\begin{proof}
 By Theorem \ref{thm Cstar simple implication}, $G$ is necessarily ICC and so by Proposition~\ref{prop:ICCgroupalgebras}, $\Ze(CU_A)\subseteq\Ze(U_A)=\T$. Moreover, by Theorem \ref{thm Cstar simple implication} again, $C_r^*(G)$ has a unique tracial state and so by Theorem~\ref{thm:CuntzPedersen}, $A=\overline{[A,A]}\oplus\Com$.
 
 We define a topological isomorphism $T:U_A/\T\to CU_A/\Ze(CU_A)$. For $u\in U_A$, we denote by $[u]_1$ its equivalence class in $U_A/\T$, and for $u\in CU_A$, we denote by $[u]_2$ its equivalence class in $CU_A/\Ze(CU_A)$. Let $[u]_1\in U_A/\T$ be an element with a representative $u\in U_A$. Then $u=\exp(\ri (a_1+r_1))\cdots\exp(\ri(a_n+r_n))$, where $a_1,\ldots,a_n\in \overline{[A,A]}$ and $r_1,\ldots,r_n\in\Rea$. We set \[T([u]_1):=[\exp(\ri a_1)\cdots\exp(\ri a_n)]_2.\] 
 
 Then $T$ is well-defined. Indeed, if $u=e^{\ri (a_1+r_1)}\cdots e^{\ri (a_n+r_n)}$ and $u'=e^{\ri (a_1'+r_1')}\cdots e^{\ri (a_m'+r_m')}$ are elements in $U_A$ which defines the same class in $U_A/\T$ where $a_1,\dots,a_n,a_1',\dots,a_m'\in \overline{[A,A]}_{\rm sa}$ and $r_1,\dots,r_n,r_1',\dots,r_m'\in \R$, then there exists $z\in \T$ such that $u'=zu$, whence 
 \[e^{\ri a_1'}\cdots e^{\ri a_m'}=we^{\ri a_1}\cdots e^{\ri a_n},\]
 where $w=\frac{z\lambda}{\lambda'}, \lambda=\exp(\ri (r_1+\dots +r_n))$, $\lambda'=\exp(\ri (r_1'+\dots+r_m')\in \T$. In particular, $w\in CU_A\cap \T= \mathcal{Z}(CU_A)$ holds. Therefore, 
 $$[e^{\ri a_1}\cdots e^{\ri a_n}]_2=[e^{\ri a_1'}\cdots e^{\ri a'_m}]_2,$$
 whence $T$ is well-defined. 

 It is clear that T is a surjective group homomorphism. In order to check that T is a homeomorphism, it is enough to check that T is injective and continuous, since then $T^{-1}$ is a Borel homomorphism between Polish groups, thus continuous (see \cite[Theorem 9.10]{Kechrisbook}).
 
 To show that $T$ is injective, pick $[u]_1\neq 1\in U_A/\T$, where $u\in U_A$. Then $u\notin\T$, so writing $u=\exp(\ri (a_1+r_1))\cdots\exp(\ri(a_n+r_n))$, where $a_1,\ldots,a_n\in \overline{[A,A]}$ and $r_1,\ldots,r_n\in\Rea$, and then defining $u':=\exp(\ri a_1)\cdots\exp(\ri a_n)$, we get that $u'\neq 1$. Moreover, $u'\notin\T$ and since $\Ze(CU_A)\subseteq\T$ we get \[T([u]_1)=[u']_2\neq 1.\] Since $[u]_1\in U_A/\T$ was arbitrary, we get that $T$ is injective. Finally, we check the continuity. Suppose $u_n\in U_A$ satisfies $[u_n]_1\to [1]$ in $U_A/\T$. Without loss of generality, we may assume that $u_n\to 1_{U_A}$. For each $n$, write $u_n$ as 
 $e^{\ri (a_1+r_1)}\cdots e^{\ri (a_m+r_m)}$ and denote by $u'_n\in CU_A$ the corresponding element $e^{\ri a_1}\cdots e^{\ri a_n}$. Notice that for each $n$, we have $u_n=u'_n t_n$, where $t_n\in\T$. By compactness of $\T$ and passing to a subsequence if necessary, we may suppose that $t_n\to t\in \T$. Since $u_n\to 1_{U_A}$ we get $u'_n\to t\in\T$. Then $t\in CU_A\cap\T=\Ze(CU_A)$ and since $T([u_n]_1)=[u'_n]_2$ for each $n$, we get $T([u_n]_1)\to CU_A/\Ze(CU_A)$. This finishes the proof.
\end{proof}

\begin{theorem}\label{thm:simplicity of C_r(G)}
Let $G$ be a countable discrete non-amenable ICC group. Then $A:=C^*_r(G)$ is simple if and only if $U_A/\T$ is topologically simple.
\end{theorem}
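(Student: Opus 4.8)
The plan is to prove both implications by transporting the statement, via the correspondence between closed normal subgroups of $U_A$ and perfect ideals, to the purely algebraic characterization of Theorem~\ref{thm:simplicity}, and then invoking the amenable/non-amenable dichotomy. The standing observation I would use throughout is that, since $G$ is ICC, Lemma~\ref{lem:ICC} gives $\Ze(A)=\C$, hence $\Ze(U_A)=\T$; consequently $U_A/\T=U_A/\Ze(U_A)$, and its closed normal subgroups correspond bijectively to the closed normal subgroups of $U_A$ that contain $\T$. The forward direction will be a short assembly of earlier results, while the reverse direction carries the real content.

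For the forward implication I would argue as follows. If $A$ is simple, then by Theorem~\ref{thm Cstar simple implication} the group $G$ is necessarily ICC, and condition (1) of Theorem~\ref{thm:simplicity} holds trivially (the only ideals being $\{0\}$ and $A$), so that $CU_A/\Ze(CU_A)$ is topologically simple by the equivalence $(1)\Leftrightarrow(4)$ there. On the other hand, Lemma~\ref{lem:simplegrpalgebra} provides, precisely under the hypothesis that $A$ is simple, a topological isomorphism $U_A/\T\cong CU_A/\Ze(CU_A)$. Transporting topological simplicity across this isomorphism yields that $U_A/\T$ is topologically simple.

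For the reverse implication the key difficulty is to promote topological simplicity of $U_A/\T$ to topological simplicity of $CU_A/\Ze(CU_A)$ \emph{without} already knowing $A$ is simple, so that Lemma~\ref{lem:simplegrpalgebra} is not yet at our disposal. I would replace it by a direct normal-subgroup argument. Assume $U_A/\T$ is topologically simple and, towards a contradiction, suppose there is a perfect ideal $I$ with $\{0\}\neq I\neq\mathrm{Id}([A,A])$. Consider the closed normal subgroup $H:=\overline{CU_I\cdot\T}$ of $U_A$; it contains $\T$, so by topological simplicity of $U_A/\T$ its image there is trivial or everything, i.e. $H=\T$ or $H=U_A$. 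Because $\T$ is central, a continuity argument collapses the commutators, giving $\overline{(H,H)}=\overline{(CU_I,CU_I)}=CU_I$ (here $CU_I$ is topologically perfect). If $H=\T$ then $CU_I\subseteq\T$ is abelian and topologically perfect, hence trivial, forcing $I=\{0\}$ by Lemma~\ref{lem:CU_Iinjective}; if $H=U_A$ then $CU_I=\overline{(H,H)}=\overline{(U_A,U_A)}=CU_A=CU_{\mathrm{Id}([A,A])}$, forcing $I=\mathrm{Id}([A,A])$ again by Lemma~\ref{lem:CU_Iinjective} (together with Lemma~\ref{lem:CU_I=CU_I'}). Either outcome contradicts the choice of $I$, so the only perfect ideals of $A$ are $\{0\}$ and $\mathrm{Id}([A,A])$; this is condition (2) of Theorem~\ref{thm:simplicity}.

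To finish the reverse direction I would invoke non-amenability. Condition (2) gives, via $(2)\Rightarrow(1)$ of Theorem~\ref{thm:simplicity}, that every closed two-sided ideal $I$ satisfies $I\subseteq\Ze(A)$ or $A/I$ abelian (equivalently, by Proposition~\ref{prop:ICCgroupalgebras}, that every proper quotient of $A$ is abelian). If $A$ had a nonzero proper ideal $I$, then since $\Ze(A)=\C$ the inclusion $I\subseteq\Ze(A)$ is impossible, so $A/I$ would be a nonzero unital abelian quotient of $C_r^*(G)$; Proposition~\ref{prop:amenablecharacterization} would then force $G$ to be amenable, contradicting the hypothesis. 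Hence $A$ is simple. I expect the main obstacle to be the third paragraph: verifying carefully that the commutator subgroup of $\overline{CU_I\cdot\T}$ collapses exactly to $CU_I$ and exploiting the two cases $H=\T$ and $H=U_A$ to pin down $I$ precisely, since this is exactly the point where Lemma~\ref{lem:simplegrpalgebra} is unavailable and must be bypassed.
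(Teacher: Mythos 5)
Your proposal is correct, and in the reverse direction it takes a route genuinely different from the paper's. The forward direction coincides with the paper's (the paper invokes Proposition~\ref{prop:ICCgroupalgebras} where you invoke Theorem~\ref{thm:simplicity}, and both then use Lemma~\ref{lem:simplegrpalgebra}, legitimately, since $A$ is assumed simple there). For the converse, the paper first establishes the intermediate claim that topological simplicity of $U_A/\T$ implies that of $CU_A/\Ze(CU_A)$: it lifts an arbitrary closed normal subgroup $H'$ of $CU_A/\Ze(CU_A)$ to a closed normal subgroup $\Ze(CU_A)\le H\le CU_A$, invokes \cite[Theorem 3.7]{Rob19} to get normality of $H$ in $U_A$, runs the case analysis $H\subseteq\T$ versus $H=U_A$, and only then concludes via Proposition~\ref{prop:ICCgroupalgebras} and Proposition~\ref{prop:amenablecharacterization}. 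You bypass the group $CU_A/\Ze(CU_A)$ altogether: for a hypothetical perfect ideal $\{0\}\neq I\neq\mathrm{Id}([A,A])$ you test simplicity of $U_A/\T$ against the single subgroup $\overline{CU_I\cdot\T}$, collapse its closed commutator subgroup to $CU_I$ (centrality of $\T$ plus topological perfectness of $CU_I$), and pin down $I$ by injectivity of $I\mapsto CU_I$ on perfect ideals (Lemmas~\ref{lem:CU_Iinjective} and~\ref{lem:CU_I=CU_I'}); Theorem~\ref{thm:simplicity} (2)$\Rightarrow$(1), the equality $\Ze(A)=\C$ from ICC, and Proposition~\ref{prop:amenablecharacterization} then finish. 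Both arguments rest on the same trick, but yours quantifies over perfect ideals rather than over closed normal subgroups. What the paper's route buys is the independently interesting implication that $U_A/\T$ simple forces $CU_A/\Ze(CU_A)$ simple without assuming $A$ simple; what yours buys is economy and cleanliness: the subgroup $\overline{CU_I\cdot\T}$ is normal in $U_A$ and contains $\T$ by construction, so you need neither \cite[Theorem 3.7]{Rob19} nor the lifting through the isomorphism theorem, and the application of simplicity of $U_A/\T$ is immediate rather than requiring an image-closure argument for a subgroup that need not contain $\T$.
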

\begin{proof}
If $A$ is simple, then by Proposition~\ref{prop:ICCgroupalgebras}, $CU_A/\Ze(CU_A)$ is topologically simple and so by Lemma~\ref{lem:simplegrpalgebra}, $U_A/\T$ is topologically simple.

Conversely, suppose that $U_A/\T$ is topologically simple. We claim that $CU_A/\Ze(CU_A)$ is then topologically simple. Indeed, let $H'$ be a closed normal subgroup of $CU_A/\Ze(CU_A)$ and let $\Ze(CU_A)\leq H\leq CU_A$ be a closed normal subgroup such that $H'=H/\Ze(CU_A)$. Again by \cite[Theorem 3.7]{Rob19}, $H$ is normal in $U_A$, so either $H\subseteq \T$ or $H=U_A$. 
Notice that this argument, applied to $H=\Ze(CU_A)$, in particular implies that $\Ze(CU_A)\subseteq\T$ as the case $\Ze(CU_A)=U_A$ is impossible.\medskip

\noindent{\bf Case 1.} $H=U_A$: this implies that $CU_A=U_A$, and thus $CU_A/\Ze(CU_A)=CU_A/T=U_A/\T$ is topologically simple.\medskip

\noindent{\bf Case 2.} $H\subseteq\T$: It follows that $H$ is central in $CU_A$ and thus $H=\Ze(CU_A)$, so $H'$ is trivial.\medskip

We have therefore proved that $CU_A/\Ze(CU_A)$ is topologically simple.
 So by Proposition~\ref{prop:ICCgroupalgebras}, all the proper quotients of $C_r^*(G)$ are abelian. However, since $G$ is not amenable, by Proposition~\ref{prop:amenablecharacterization}, $C_r^*(G)$ is simple.
\end{proof}
\section*{Acknowledgments}
We would like to thank the anonymous referee for careful reading and suggestions which improved the presentation of the paper. 
\bibliographystyle{siam}
\bibliography{references}

\begin{thebibliography}{10}

\bibitem{AD23}
{\sc H.~Ando and M.~Doucha}, {\em An application of {K}irchberg's lemma on central sequence algebras to groups of approximately inner automorphisms}, arXiv preprint arXiv:2310.11922,  (2023).

\bibitem{ADM22}
{\sc H.~Ando, M.~Doucha, and Y.~Matsuzawa}, {\em Large scale geometry of {Banach}-{Lie} groups}, Trans. Am. Math. Soc., 375 (2022), pp.~2827--2881.

\bibitem{MR1940428AraMathieu}
{\sc P.~Ara and M.~Mathieu}, {\em Local multipliers of {$C^*$}-algebras}, Springer Monographs in Mathematics, Springer-Verlag London, Ltd., London, 2003.

\bibitem{BdHV}
{\sc B.~Bekka, P.~de~la Harpe, and A.~Valette}, {\em Kazhdan's property ({T})}, vol.~11 of New Mathematical Monographs, Cambridge University Press, Cambridge, 2008.

\bibitem{BonfiglioliBCHMR2883818}
{\sc A.~Bonfiglioli and R.~Fulci}, {\em Topics in noncommutative algebra}, vol.~2034 of Lecture Notes in Mathematics, Springer, Heidelberg, 2012.
\newblock The theorem of Campbell, Baker, Hausdorff and Dynkin.

\bibitem{BKKO}
{\sc E.~Breuillard, M.~Kalantar, M.~Kennedy, and N.~Ozawa}, {\em {$C^*$}-simplicity and the unique trace property for discrete groups}, Publ. Math. Inst. Hautes \'{E}tudes Sci., 126 (2017), pp.~35--71.

\bibitem{BKS08}
{\sc M.~Bre\v{s}ar, E.~Kissin, and V.~S. Shulman}, {\em Lie ideals: from pure algebra to {$C^*$}-algebras}, J. Reine Angew. Math., 623 (2008), pp.~73--121.

\bibitem{ChaRob23}
{\sc A.~Chand and L.~Robert}, {\em Simplicity, bounded normal generation, and automatic continuity of groups of unitaries}, Adv. Math., 415 (2023), p.~52.
\newblock Id/No 108894.

\bibitem{CP79}
{\sc J.~Cuntz and G.~K. Pedersen}, {\em Equivalence and traces on {$C^{\ast} $}-algebras}, J. Functional Analysis, 33 (1979), pp.~135--164.

\bibitem{Davidson-book}
{\sc K.~R. Davidson}, {\em {$C^*$}-algebras by example}, vol.~6 of Fields Institute Monographs, American Mathematical Society, Providence, RI, 1996.

\bibitem{dlH}
{\sc P.~de~la Harpe}, {\em On simplicity of reduced {{\(C^*\)}}-algebras of groups}, Bull. Lond. Math. Soc., 39 (2007), pp.~1--26.

\bibitem{ER93}
{\sc G.~A. Elliot and M.~R{\o}rdam}, {\em The automorphism group of the irrational rotation {{\(C^*\)}}-algebra}, Commun. Math. Phys., 155 (1993), pp.~3--26.

\bibitem{TakRobarxiv23}
{\sc L.~Fogang~Takoutsing and L.~Robert}, {\em Distinguishing {{\(C^\ast\)}}-algebras by their unitary groups}, arXiv preprint arXiv:2306.15825,  (2023).

\bibitem{GaTh23}
{\sc E.~Gardella and H.~Thiel}, {\em Prime ideals in {{\(C^\ast\)}}-algebras and applications to lie theory}, arXiv preprint arXiv:2306.16510,  (2023).

\bibitem{GupJain}
{\sc V.~P. Gupta and R.~Jain}, {\em On closed {L}ie ideals of certain tensor products of {$C^*$}-algebras}, Math. Nachr., 291 (2018), pp.~1297--1309.

\bibitem{Hag17}
{\sc U.~Haagerup}, {\em A new look at {$C^*$}-simplicity and the unique trace property of a group}, in Operator algebras and applications---the {A}bel {S}ymposium 2015, vol.~12 of Abel Symp., Springer, [Cham], 2017, pp.~167--176.

\bibitem{HofMor}
{\sc K.~H. Hofmann and S.~A. Morris}, {\em The structure of compact groups---a primer for the student---a handbook for the expert}, vol.~25 of De Gruyter Studies in Mathematics, De Gruyter, Berlin, fourth~ed., [2020] \copyright 2020.

\bibitem{HopPau04}
{\sc A.~Hopenwasser and V.~Paulsen}, {\em Lie ideals in operator algebras.}, J. Oper. Theory, 52 (2004), pp.~325--340.

\bibitem{Kechrisbook}
{\sc A.~S. Kechris}, {\em Classical descriptive set theory}, vol.~156 of Grad. Texts Math., Berlin: Springer-Verlag, 1995.

\bibitem{Mar10}
{\sc L.~W. Marcoux}, {\em Projections, commutators and {Lie} ideals in {{\(C^\ast\)}}-algebras}, Math. Proc. R. Ir. Acad., 110A (2010), pp.~31--55.

\bibitem{MarMur98}
{\sc L.~W. Marcoux and G.~J. Murphy}, {\em Unitarily-invariant linear spaces in {{\(C^*\)}}-algebras}, Proc. Am. Math. Soc., 126 (1998), pp.~3597--3605.

\bibitem{Meiers81MR0638381}
{\sc C.~R. Miers}, {\em Closed {L}ie ideals in operator algebras}, Canadian J. Math., 33 (1981), pp.~1271--1278.

\bibitem{MR1989500Murphy2003}
{\sc G.~J. Murphy}, {\em Primitivity conditions for full group {$C^\ast$}-algebras}, Bull. London Math. Soc., 35 (2003), pp.~697--705.

\bibitem{Ph93}
{\sc N.~C. Phillips}, {\em A survey of exponential rank}, in {$C^\ast$}-algebras: 1943--1993 ({S}an {A}ntonio, {TX}, 1993), vol.~167 of Contemp. Math., Amer. Math. Soc., Providence, RI, 1994, pp.~352--399.

\bibitem{Robert16}
{\sc L.~Robert}, {\em On the {L}ie ideals of {$C^*$}-algebras}, J. Operator Theory, 75 (2016), pp.~387--408.

\bibitem{Rob19}
\leavevmode\vrule height 2pt depth -1.6pt width 23pt, {\em Normal subgroups of invertibles and of unitaries in a {${\rm C}^*$}-algebra}, J. Reine Angew. Math., 756 (2019), pp.~285--319.

\bibitem{RLLbook}
{\sc M.~R{\o}rdam, F.~Larsen, and N.~Laustsen}, {\em An introduction to {$K$}-theory for {$C^*$}-algebras}, vol.~49 of London Mathematical Society Student Texts, Cambridge University Press, Cambridge, 2000.

\bibitem{RudinFunctionalAnalysisMR1157815}
{\sc W.~Rudin}, {\em Functional analysis}, International Series in Pure and Applied Mathematics, McGraw-Hill, Inc., New York, second~ed., 1991.

\end{thebibliography}
\end{document}